\numberwithin{equation}{section}
\def\today{\number\day\space\ifcase\month\or   January\or February\or
  March\or April\or May\or June\or   July\or August\or September\or
  October\or November\or December\fi\   \number\year}
\theoremstyle{definition}
\newtheorem{thm}{Theorem}[section]
\newtheorem{lem}[thm]{Lemma}
\newtheorem{prp}[thm]{Proposition}
\newtheorem{dfn}[thm]{Definition}
\newtheorem{cor}[thm]{Corollary}
\newtheorem{rmk}[thm]{Remark}
\newtheorem{exa}[thm]{Example}
\newtheorem{qst}[thm]{Question}
\newcommand{\beq}{\begin{equation}}
\newcommand{\eeq}{\end{equation}}
\newcommand{\beqr}{\begin{eqnarray*}}
\newcommand{\eeqr}{\end{eqnarray*}}
\newcommand{\bal}{\begin{align*}}
\newcommand{\eal}{\end{align*}}
\newcommand{\bei}{\begin{itemize}}
\newcommand{\eei}{\end{itemize}}
\newcommand{\limi}[1]{\lim_{{#1} \to \infty}}
\newcommand{\af}{\alpha}
\newcommand{\bt}{\beta}
\newcommand{\gm}{\gamma}
\newcommand{\ep}{\varepsilon}
\newcommand{\ld}{\lambda}
\newcommand{\sm}{\sigma}
\newcommand{\kp}{\kappa}
\newcommand{\ph}{\varphi}
\newcommand{\rh}{\rho}
\newcommand{\Gm}{\Gamma}
\newcommand{\Ld}{\Lambda}
\newcommand{\Z}{{\mathbb{Z}}}
\newcommand{\C}{{\mathbb{C}}}
\newcommand{\N}{{\mathbb{Z}}_{> 0}}
\newcommand{\Nz}{{\mathbb{Z}}_{\geq 0}}
\newcommand{\id}{{\mathrm{id}}}
\newcommand{\sint}{{\mathrm{int}}}
\newcommand{\dist}{{\mathrm{dist}}}
\newcommand{\Prim}{{\mathrm{Prim}}}
\newcommand{\card}{{\mathrm{card}}}
\newcommand{\Aut}{{\mathrm{Aut}}}
\newcommand{\dirlim}{\varinjlim}
\newcommand{\andeqn}{\,\,\,\,\,\, {\mbox{and}} \,\,\,\,\,\,}
\title[Permanence properties for crossed products]{Permanence properties
for crossed products and fixed point algebras of finite groups}
\author{Cornel Pasnicu}
\author{N.~Christopher Phillips}
\date{16~August 2012}
\address{Department of Mathematics,
      The University of Texas at San Antonio,
      San Antonio TX 78249, USA.}
\email[]{Cornel.Pasnicu@utsa.edu}
\address{Department of Mathematics, University of Oregon,
      Eugene OR 97403-1222, USA.}
\email[]{ncp@darkwing.uoregon.edu}
\subjclass{Primary 46L55;
 Secondary 46L35, 46L40.}
\thanks{Some of this material is based upon work of the second
   author supported by the US National Science Foundation
   under Grants DMS-0302401, DMS-0701076, and DMS-1101742.}
\begin{document}

\begin{abstract}
Let $\af \colon G \to \Aut (A)$ be an action of a finite group~$G$
on a C*-algebra~$A.$
We present some conditions under which properties of $A$
pass to the crossed product $C^* (G, A, \alpha)$
or the fixed point algebra $A^{\alpha}.$
We mostly consider the ideal property,
the projection property,
topological dimension zero,
and pure infiniteness.
In many of our results,
additional conditions are necessary on the group, the algebra,
or the action.
Sometimes the action must be strongly pointwise outer,
and in a few results it must have the Rokhlin property.
When $G$ is finite abelian,
we prove that crossed products and fixed point algebras by~$G$ preserve
topological dimension zero with no condition on the action.

We give an example to show that
the ideal property and the projection property do not pass to
fixed point algebras (even when the group is ${\mathbb{Z}}_{2}$).
The construction also gives an example
of a C*-algebra~$B$ which does not have the ideal property
but such that $M_{2} (B)$ does have the ideal property;
in fact, $M_{2} (B)$ has the projection property.
\end{abstract}

\maketitle

\indent
In this paper,
we are interested in permanence properties for
crossed products and fixed point algebras by finite groups.
For the most part,
we consider the following loosely related properties:
\begin{itemize}
\item
The ideal property.
\item
The projection property.
\item
Topological dimension zero.
\item
Pure infiniteness for nonsimple C*-algebras.
\end{itemize}
The ideal property for a C*-algebra~$A,$
first defined in the introduction of~\cite{Stv},
requires that every ideal in~$A$
be generated as an ideal by its projections.
The projection property is a strengthening,
introduced in Definition~1 of~\cite{Psn};
it requires that every ideal in~$A$
have an increasing approximate identity consisting of projections.
(See Definition~4.8 of~\cite{CP} for a variation,
suitable for use with nonseparable C*-algebras.)
Topological dimension zero,
defined in~\cite{BP} (see Definition~\ref{D-TDZero311} below),
means that the topology of $\Prim (A)$
has a base consisting of compact open sets.
The main theorem of~\cite{BE}
states, in effect,
that if $X$ is the primitive ideal space of some separable C*-algebra,
then $X$ is the primitive ideal space of an AF~algebra
if and only if the topology of $X$
has a base consisting of compact open sets.
Pure infiniteness for not necessarily simple C*-algebras
is as in Definition 4.1 of~\cite{KR}.
According to Corollary~4.3 of~\cite{PR},
if $A$ is separable and purely infinite,
then the following are equivalent:
\begin{itemize}
\item
${\mathcal{O}}_2 \otimes A$ has real rank zero.
\item
${\mathcal{O}}_2 \otimes A$ has the ideal property.
\item
$A$ has the ideal property.
\item
$A$ has topological dimension zero.
\end{itemize}

The best plausible permanence results
are as follows:
crossed products by arbitrary discrete groups preserve
pure infiniteness,
while crossed products by exact actions of discrete groups
preserve the other three properties when,
except for a finite normal subgroup,
the action is strongly pointwise outer.
We do not give theorems in anything like this generality.
For example,
it remains unknown whether crossed products by arbitrary actions
of finite groups preserve the projection property,
the ideal property,
or pure infiniteness.

Results on pure infiniteness of crossed products
also appear in~\cite{RrSr}.
They mostly have a somewhat different flavor than the
results given here.
Most of them assume essential freeness of the action
on the equivalence classes of irreducible representations
of the C*-algebra~$A,$
together with some kind of paradoxical decomposition
for the action of $G$ on~$A,$
but do not assume that $A$ is purely infinite.

We summarize our results.
Section~\ref{Sec:0019} is devoted to Rokhlin actions of finite groups
on unital C*-algebras.
We show that crossed products and fixed point algebras
of such actions
preserve pure infiniteness,
the class of countable direct limits of finite direct sums
of unital Kirchberg algebras
satisfying the Universal Coefficient Theorem,
and the class of WB~algebras.

In Section~\ref{Sec:2-0303},
we consider strongly pointwise outer actions of finite groups.
We show that crossed products by such actions preserve
the ideal property and the projection property.
One would hope that no condition on the action would be needed,
and that the result would also hold for fixed point algebras.
Both together can't be true:
we exhibit a pointwise outer (but not strongly pointwise outer)
action of~$\Z_2$
on a C*-algebra with the projection property
such that the fixed point algebra does not even have the ideal
property.
A closely related construction gives
a C*-algebra~$A$ such that $M_2 (A)$ has the projection property
but $A$ does not even have the ideal property,
thus giving a negative answer to a question in~\cite{CP}.

Section~\ref{Sec:TopDimZero} is devoted to C*-algebras with
topological dimension zero.
We prove that the crossed product and fixed point algebra
of an arbitrary action of a finite abelian group
on such a C*-algebra again has topological dimension zero.
We obtain the same result for strongly pointwise outer actions
of arbitrary finite groups.

In Section~\ref{Sec:0123},
we consider purely infinite C*-algebras
with finite primitive ideal spaces.
We show that
crossed products and fixed point algebras
of arbitrary actions of finite groups
preserve this class.
They also preserve the class of purely infinite C*-algebras
with composition series in which all subquotients
have finite primitive ideal spaces.
Thus, any example of a crossed product of a purely infinite C*-algebra
by an action of a finite group
which is not purely infinite
must be somewhat complicated.

We use the following conventions and notation throughout this paper.
Ideals in C*-algebras are always closed two sided ideals.
If $\af \colon G \to \Aut (A)$
is an action of a group~$G$ on a C*-algebra~$A,$
then we denote the fixed point algebra by~$A^{\af}.$
We will usually use the same symbol~$\af$
for the action
$g \mapsto \af_{g} |_I$
induced by $\af$ on an invariant ideal $I \subset A,$
and similarly with invariant subalgebras and subquotients,
as well as $M_n (A)$ and similar constructions.

When $G$ is discrete,
the standard unitary in $C^* (G, A, \af)$
(or in $M ( C^* (G, A, \af) )$ when $A$ is not unital)
corresponding to $g \in G$ will be denoted by~$u_g.$
We use the same notation for the standard unitaries
in reduced crossed products.
The standard conditional expectation
from $C^*_{\mathrm{r}} (G, A, \af)$ to~$A$
will usually be denoted by~$E.$

If $A$ is a C*-algebra,
then $A_{+}$ denotes the set of positive elements of~$A.$

We will make repeated use of the following standard fact.

\begin{prp}\label{P-FP303}
Let $A$ be a C*-algebra, let $G$ be a compact group, and let
$\af \colon G \to \Aut (A)$ be an action of $G$ on~$A.$
Then $A^{\af}$
is isomorphic to a corner of $C^* (G, A, \af).$
\end{prp}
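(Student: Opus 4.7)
My plan is to construct an explicit projection $p$ in $M(C^*(G, A, \af))$ by averaging the canonical unitaries, and then identify the corner $p C^* (G, A, \af) p$ with $A^{\af}$ via the map $a \mapsto ap$.

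First, I would set $p = \int_G u_g \, dg$, where $dg$ is normalized Haar measure on $G$. Strictly speaking this is an element of $M(C^* (G, A, \af))$ defined by how it acts on the dense subalgebra generated by $a u_h$ with $a \in A$ and $h \in G$; it lies in $C^* (G, A, \af)$ itself when $A$ is unital. Self-adjointness $p^* = p$ follows from $u_g^* = u_{g^{-1}}$ together with the inversion-invariance of Haar measure. The key computation is $u_h p = p$ for every $h \in G$ (by left-invariance of Haar measure), which then gives $p u_h = p$ by taking adjoints, and $p^2 = \int_G p u_g \, dg = \int_G p \, dg = p$.

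Next, I would define $\ph \colon A^{\af} \to p C^* (G, A, \af) p$ by $\ph (a) = ap$. For $a \in A^{\af}$ one has $u_g a = a u_g$ for all $g$, so integrating yields $pa = ap = pap$, confirming that $\ph(a)$ really lies in the corner and that $\ph$ is a $*$-homomorphism. To see $\ph$ is surjective, I would compute, for $a \in A$ and $h \in G$,
\[
p (a u_h) p = (p a)(u_h p) = p a p = \int_G \int_G \af_g(a) u_{gh} \, dg \, dh,
\]
and after substituting $k = gh$ in the inner integral this becomes $E_{\af} (a) \cdot p$, where $E_{\af} (a) = \int_G \af_g (a) \, dg \in A^{\af}.$ Since elements of the form $a u_h$ are dense and $E_{\af}$ is the identity on $A^{\af}$, continuity gives $p C^* (G, A, \af) p = A^{\af} p = \ph (A^{\af})$.

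The main obstacle is injectivity of $\ph$, because $a \mapsto ap$ could conceivably annihilate nonzero fixed points. To handle this I would faithfully represent $A$ on a Hilbert space $H$ and use the regular representation of $C^* (G, A, \af)$ on $L^2 (G, H)$, which is faithful because compact groups are amenable. In this representation $p$ is the projection onto the subspace of constant $H$-valued functions, on which $\ph(a) = ap$ acts as the original operator $a \in B(H)$. Hence $ap = 0$ forces $a = 0$, completing the proof. A secondary bookkeeping issue is that when $A$ is non-unital, $p$ lives only in the multiplier algebra, so the ``corner'' must be interpreted as the hereditary subalgebra $p C^* (G, A, \af) p \subset C^* (G, A, \af)$, but this does not affect any step of the argument.
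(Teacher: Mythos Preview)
The paper does not actually prove this proposition; it simply cites Rosenberg~[Rs]. Your argument is precisely Rosenberg's: set $p = \int_G u_g\,dg \in M(C^*(G,A,\af))$, verify that $p$ is a projection via $u_h p = p$, and show that $a \mapsto ap$ gives an isomorphism $A^{\af} \to p\,C^*(G,A,\af)\,p$. So your approach coincides with the proof the paper invokes.

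One small correction for the general compact (non-discrete) case: elements of the form $a u_h$ are not dense in $C^*(G,A,\af)$---indeed $u_h$ lies only in the multiplier algebra. The dense $*$-subalgebra is $C(G,A)$ (or $L^1(G,A)$), with $f$ interpreted as $\int_G f(h) u_h\,dh$. Your surjectivity computation adapts without difficulty: since $u_h p = p$ for all~$h$, one gets
\[
p f p \;=\; \int_G p\,f(h)\,u_h p\,dh \;=\; \int_G p\,f(h)\,p\,dh \;=\; \Big(\int_G E_{\af}(f(h))\,dh\Big)\,p \;\in\; A^{\af}\cdot p.
\]
For the finite-group case, which is all that is used elsewhere in the paper, your phrasing is already literally correct.
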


\begin{proof}
See the Proposition in~\cite{Rs}.
\end{proof}

\section{Rokhlin actions of finite groups}\label{Sec:0019}

The Rokhlin property for actions of finite groups
is given in Definition 3.1 of~\cite{Iz};
see also an equivalent form
in Definition~1.1 of~\cite{PhtRp1a},
and see the beginning of Section~2 of~\cite{PhtRp1a}
for some indication of the fairly long history of this property.

In this section we give three classes of unital C*-algebras
which are preserved under formation of
crossed products and fixed point algebras by actions of
finite groups which have the Rokhlin property.
They are the unital but not necessarily simple
purely infinite C*-algebras
(as in Definition~4.1 of~\cite{KR}),
the unital countable direct limits of finite direct sums
of Kirchberg algebras satisfying the Universal Coefficient Theorem,
and the unital WB~algebras
(Definition~4.3 of~\cite{CP}; see Definition~\ref{D:D2_0019} below)
which in addition have the ideal property.

A number of other results of the same general type can be found in
Sections 3 and~4 of~\cite{OP}.

The following terminology will be convenient.
It is adapted from part of
Definition~1.5 of~\cite{OP}
(without the requirement of separability)
and from Definitions 2.1 and~2.2 of~\cite{CP}.
If $X$ is a metric space with metric~$\rh,$
then for $x \in X$ and $B \subset X$
we set $\dist (x, B) = \inf_{y \in B} \rh (x, y).$

\begin{dfn}\label{D-LocApp-303}
Let ${\mathcal{C}}$ be a class of C*-algebras.
A {\emph{strong local ${\mathcal{C}}$-algebra}}
is a C*-algebra $A$
such that for every finite set $S \subset A$ and every $\ep > 0,$
there is a C*-algebra $B \in {\mathcal{C}}$
and a homomorphism $\ph \colon B \to A$
(not necessarily injective)
such that $\dist (a, \, \ph (B)) < \ep$ for all $a \in S.$
We also say that $A$
{\emph{can be locally approximated by~${\mathcal{C}}$}}.
\end{dfn}

We will use the following restatement of Theorem~3.2 of~\cite{OP}
in terms of Definition~\ref{D-LocApp-303}.

\begin{thm}[Theorem~3.2 of~\cite{OP}]\label{T-RPLcAp303}
Let $A$ be a unital C*-algebra,
let $G$ be a finite group,
and let $\af \colon G \to \Aut (A)$
be an action with the Rokhlin property.
Then $C^* (G, A, \alpha)$ can be locally approximated by the class of
matrix algebras over corners of~$A.$
\end{thm}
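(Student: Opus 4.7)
The plan is to use the Rokhlin projections to build a copy of $M_{|G|} (pAp)$ inside $C^* (G, A, \af)$ that approximately contains any given finite set. By density of the algebraic crossed product in $C^* (G, A, \af),$ it suffices to verify the approximation for finite subsets $S$ whose elements have the form $\sum_{g \in G} a_g u_g$ with $a_g \in A.$ Given such an $S$ and $\ep > 0,$ apply the Rokhlin property to the finite set $F = \{a_g\}$ with a tolerance $\ep' > 0$ to be fixed at the end in terms of $\ep,$ $|G|,$ and $\max \|a_g\|.$ This produces projections $(e_g)_{g \in G}$ in $A,$ mutually orthogonal with sum~$1,$ satisfying $\af_g (e_h) \approx e_{gh}$ and $\|e_g a - a e_g\| < \ep'$ for all $a \in F$ and $g \in G.$

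Set $p = e_e$ and $B = pAp,$ a corner of~$A.$ Consider the elements $e_{g,h} = u_g p u_h^* \in C^* (G, A, \af).$ Using $u_g p u_g^* = \af_g (p) \approx e_g$ and the orthogonality of the $e_g,$ one checks within error $O (\ep')$ that $e_{g,h} e_{k,l} = \dt_{h,k} e_{g,l}$ and $e_{g,h}^* = e_{h,g},$ so the $e_{g,h}$ form an approximate system of matrix units; together with $B$ (sitting as the corner $p B p = B$) they generate an approximate copy of $M_{|G|} (B).$ A standard perturbation argument then produces an actual $*$-homomorphism
\[
\ph \colon M_{|G|} (B) \to C^* (G, A, \af), \quad (b_{g,h}) \mapsto \ts{\sum_{g,h}} u_g b_{g,h} u_h^*,
\]
whose image approximates the C*-subalgebra generated by the $e_{g,h}$ and~$B.$

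To verify that every $\sum_g a_g u_g \in S$ lies $\ep$-close to $\ph (M_{|G|} (B)),$ use $e_h a \approx a e_h$ and $e_h e_k = 0$ for $h \neq k$ to obtain $a \approx \sum_h e_h a e_h$ with error controlled by $|G|^2 \ep' \| a \|.$ Using $\af_h (p) \approx e_h,$ each diagonal piece rewrites as $e_h a e_h \approx u_h (p \af_{h^{-1}} (a) p) u_h^* = u_h b_h u_h^*$ with $b_h \in B.$ Hence $a u_g \approx \sum_h u_h b_h u_{h^{-1} g},$ which belongs to the image of $\ph$ after identifying $u_h^* u_g = u_{h^{-1} g}.$ The main obstacle is bookkeeping the error estimate: the accumulation of $O (\ep')$ errors over $|G|^2$ off-diagonal terms, $|G|$ summands in $g,$ and the matrix-unit perturbation must stay below~$\ep.$ This is routine but the constants require care, and can be bypassed entirely by invoking Izumi's central-sequence version of the Rokhlin property (Definition 3.1 of~\cite{Iz}), where the approximate relations become exact.
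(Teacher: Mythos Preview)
The paper does not give its own proof; the statement is quoted from~\cite{OP} and used as a black box. Your sketch is essentially the argument of~\cite{OP}: Rokhlin projections yield approximate matrix units $u_g p u_h^*$ in the crossed product, and any element of the algebraic crossed product is close to their span with coefficients in $p A p.$ One imprecision worth flagging: the displayed formula $(b_{g,h}) \mapsto \sum_{g,h} u_g b_{g,h} u_h^*$ is not literally a $*$-homomorphism, since for $h \neq k$ the cross term $p \, u_{h^{-1} k} \, p = p \, \af_{h^{-1} k} (p) \, u_{h^{-1} k}$ is only approximately zero; the ``actual $*$-homomorphism'' produced by perturbation will be a nearby map, not this one. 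Your closing remark about Izumi's central-sequence formulation is exactly the device that makes these relations exact and turns the displayed formula into a genuine homomorphism, and it is how the argument is cleanly carried out in practice.
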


\begin{prp}\label{P:P1_0019}
Let $A$ be a purely infinite unital C*-algebra,
let $G$ be a finite group,
and let $\af \colon G \to \Aut (A)$
be an action with the Rokhlin property.
Then $C^* (G, A, \alpha)$ and
$A^{\alpha}$ are also purely infinite unital C*-algebras.
\end{prp}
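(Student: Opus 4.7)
The plan is to chain Theorem~\ref{T-RPLcAp303} with two standard permanence properties of pure infiniteness in the sense of Kirchberg--R\o rdam, and then to deduce the fixed point algebra case from Proposition~\ref{P-FP303}.

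First, I would record two elementary permanence facts (each appearing in Section~4 of~\cite{KR}, or easily derived from the Cuntz-comparison definition given there): pure infiniteness passes to hereditary subalgebras, in particular to corners $pAp$ for a projection $p \in A$; and pure infiniteness passes to matrix algebras, so that $M_n(pAp)$ is purely infinite whenever $A$ is. Thus every C*-algebra in the class locally approximating $C^*(G, A, \alpha)$ via Theorem~\ref{T-RPLcAp303} is purely infinite.

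Second, I would invoke the fact that pure infiniteness is preserved under local approximation in the sense of Definition~\ref{D-LocApp-303}. This is essentially a direct approximation argument using the characterization of pure infiniteness via Cuntz subequivalence: given $a, b \in C^*(G, A, \alpha)_+$ with $b$ in the ideal generated by $a$, one approximates $a$ and $b$ by images of positive elements $a', b'$ from a purely infinite local approximant $B$, arranges (by a small functional calculus adjustment) that $b'$ lies in the ideal of $B$ generated by $a'$, applies pure infiniteness of $B$ to get $b' \precsim a'$ there, and transports the resulting comparison back, absorbing the approximation error by a standard perturbation lemma for Cuntz subequivalence. Combined with Theorem~\ref{T-RPLcAp303} and the first step, this gives that $C^*(G, A, \alpha)$ is purely infinite. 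Unitality is immediate since $G$ is finite and $A$ is unital, so $1_A$ is a unit for the crossed product.

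Third, for the fixed point algebra I simply apply Proposition~\ref{P-FP303}: $A^\alpha$ is isomorphic to a corner of the crossed product, and it is unital with unit $1_A$. Since pure infiniteness passes to corners by the first step, $A^\alpha$ is purely infinite as well.

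The only nontrivial ingredient is the preservation of pure infiniteness under local approximation; everything else is either immediate or a direct citation. I expect this to be citeable from~\cite{KR} (or derivable from a small perturbation argument for Cuntz comparison), and it is the one step that deserves careful writing. Everything after that is plug-and-play with Theorem~\ref{T-RPLcAp303} and Proposition~\ref{P-FP303}.
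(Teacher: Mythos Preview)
Your proposal is correct and follows essentially the same route as the paper's proof: use Theorem~\ref{T-RPLcAp303} together with the permanence of pure infiniteness under corners (Proposition~4.17 of~\cite{KR}) and matrix amplifications (Theorem~4.23 of~\cite{KR}) to see that the crossed product is locally approximated by purely infinite algebras, then invoke preservation under local approximation (the paper cites the proof of Proposition~4.18 of~\cite{KR} for this), and finally handle $A^{\alpha}$ via Proposition~\ref{P-FP303}. The one point you flagged as needing care---local approximation preserves pure infiniteness---is exactly the step the paper dispatches by reference to~\cite{KR}.
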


% \begin{rmk}\label{R-PrvPI-304}
We do not know of any example of any action at all
of a finite group~$G$
on a purely infinite C*-algebra~$A$ such that the crossed product
is not purely infinite.
If $A$ is simple,
then $C^* (G, A, \alpha)$ is always purely infinite,
regardless of~$\af.$
See Corollary~4.4 of~\cite{JO}.
% \end{rmk}
%
In Section~\ref{Sec:0123},
we will give other conditions under which
the crossed product of a purely infinite C*-algebra
by a finite
group is purely infinite.

\begin{proof}[Proof of Proposition~\ref{P:P1_0019}]
By Proposition~4.17 of~\cite{KR},
hereditary subalgebras of purely infinite C*-algebras
are again purely infinite.
It follows from Theorem~4.23 of~\cite{KR}
that if $B$ is purely infinite and $n \in \N,$
then $M_n (B)$ is purely infinite.
Therefore Theorem~\ref{T-RPLcAp303}
implies that $C^* (G, A, \alpha)$
can be locally approximated by purely infinite C*-algebras
in the sense of Definition~\ref{D-LocApp-303}.
The proof of Proposition 4.18 of~\cite{KR}
(which is stated for direct limits)
shows that any C*-algebra
which can be locally approximated by purely infinite C*-algebras
is itself purely infinite.
So $C^* (G, A, \alpha)$ is purely infinite.

The statement about $A^{\alpha}$ now follows from
Proposition~4.17 of~\cite{KR} and Proposition~\ref{P-FP303}.
\end{proof}

\begin{thm}\label{T:T1_0019}
Let ${\mathcal{C}}$ be the class of unital (separable nuclear)
C*-algebras that are direct limits of sequences of finite direct
sums of Kirchberg C*-algebras
satisfying the Universal Coefficient Theorem.
Let $A \in {\mathcal{C}},$
let $G$ be a finite group,
and let $\af \colon G \to \Aut (A)$
be an action with the Rokhlin property.
Then $C^* (G, A, \alpha)$ and
$A^{\alpha}$ are both in~${\mathcal{C}}.$
\end{thm}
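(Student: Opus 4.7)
The plan is to first prove $C^*(G, A, \af) \in \mathcal{C}$, and then deduce $A^{\af} \in \mathcal{C}$ from Proposition~\ref{P-FP303}.

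By Theorem~\ref{T-RPLcAp303}, the crossed product is locally approximated in the sense of Definition~\ref{D-LocApp-303} by matrix algebras over corners of~$A$. My first step is to check that every such algebra $M_k(pAp)$ lies in~$\mathcal{C}$. Writing $A = \varinjlim A_n$ with each $A_n$ a finite direct sum of unital Kirchberg algebras satisfying the UCT, a standard perturbation argument shows that any projection $p \in A$ is unitarily equivalent in~$A$ to a projection $q$ sitting in some~$A_n$, whence $pAp \cong qAq \cong \varinjlim_{m \geq n} q A_m q$. Each $q A_m q$ is a finite direct sum of full hereditary subalgebras of unital Kirchberg algebras satisfying the UCT; since simplicity, pure infiniteness, separability, nuclearity, and (via Morita equivalence) the UCT all pass to nonzero hereditary subalgebras, each summand is itself a unital Kirchberg algebra satisfying the UCT. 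Passage to $k \times k$ matrices preserves this structure, so $M_k(pAp) \in \mathcal{C}$.

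The main obstacle is the passage from local approximation by algebras in $\mathcal{C}$ to actual membership in~$\mathcal{C}$. My approach is first to strengthen the approximation: since each approximating $B \in \mathcal{C}$ is itself a sequential direct limit of finite direct sums of unital Kirchberg algebras satisfying the UCT, going far enough in that direct system and composing with the map to the crossed product shows that $C^*(G, A, \af)$ is already locally approximated by \emph{finite direct sums} of unital Kirchberg algebras satisfying the UCT. Separability of $C^*(G, A, \af)$ together with a diagonal argument applied to a countable dense subset then produces a sequence of homomorphisms $\ph_n \colon B_n \to C^*(G, A, \af)$, with each $B_n$ a finite direct sum of unital Kirchberg algebras satisfying the UCT, whose images have dense union. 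To assemble these into a genuine sequential direct limit with compatible connecting maps, I would invoke semiprojectivity of unital Kirchberg algebras with finitely generated K-theory satisfying the UCT (as established by Spielberg, Szymanski, and others): by first replacing each $B_n$ by a finitely generated K-theoretic truncation available in its internal direct system, near-compatible $\ph_n$'s can be perturbed to strictly compatible ones, exhibiting $C^*(G, A, \af)$ as an element of~$\mathcal{C}$.

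For $A^{\af}$: Proposition~\ref{P-FP303} realizes $A^{\af}$ as a corner $e\, C^*(G, A, \af)\, e$ for some projection $e$ (namely $e = \tfrac{1}{|G|}\sum_{g \in G} u_g$), and this corner is unital with unit~$e$. Applying the corner-closure argument from the second paragraph, now to $C^*(G, A, \af) \in \mathcal{C}$ in place of $A$, yields $A^{\af} \in \mathcal{C}$ and completes the proof.
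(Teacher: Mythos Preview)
Your overall strategy matches the paper's: use Theorem~\ref{T-RPLcAp303} to get local approximation, upgrade local approximation to an honest direct-limit decomposition, and then handle $A^{\af}$ via Proposition~\ref{P-FP303} and closure of~$\mathcal{C}$ under corners. Your treatment of corners (perturbing the projection into a stage of the direct system) is fine and is essentially what the paper does too.

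The difference is in how the ``main obstacle'' is handled. The paper does not attempt to build the direct system by hand. Instead it introduces the class $\mathcal{C}_0$ of finite direct sums of (not necessarily unital) Kirchberg UCT algebras, observes that $\mathcal{C}_0$ is closed under corners, matrices, direct sums, and quotients, and hence that $C^*(G,A,\af)$ admits an exhaustive sequence of unital \emph{sub}algebras lying in~$\mathcal{C}_0$ (the maps can be taken injective precisely because quotients stay in~$\mathcal{C}_0$). It then invokes Corollary~5.11 and Remark~5.13 of~\cite{DP} to pass from such an exhaustive sequence to a genuine direct-limit decomposition. That citation is doing real work: it packages exactly the semiprojectivity and intertwining arguments you are sketching.

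Your sketch of that step is on the right track but is not a proof as written. Semiprojectivity of the $B_n$'s gives you connecting maps $\psi_n \colon B_n \to B_{n+1}$ with $\ph_{n+1}\circ\psi_n$ only \emph{close} to~$\ph_n$, not equal; turning this into an identification $C^*(G,A,\af)\cong\varinjlim(B_n,\psi_n)$ still requires an Elliott-type approximate intertwining argument, together with some care that the limit map is injective (this is where arranging the $\ph_n$ to be injective, via passage to quotients, matters). None of this is mentioned. Either cite~\cite{DP} for this passage, as the paper does, or fill in those details explicitly; as it stands the step ``near-compatible $\ph_n$'s can be perturbed to strictly compatible ones, exhibiting $C^*(G,A,\af)$ as an element of~$\mathcal{C}$'' is a genuine gap.
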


\begin{proof}
Let ${\mathcal{C}}_0$ be the class of
C*-algebras that are finite direct
sums of (not necessarily unital) Kirchberg algebras
satisfying the Universal Coefficient Theorem.
Clearly direct sums, corners, and quotients
of unital algebras in ${\mathcal{C}}_0$
are again unital and in ${\mathcal{C}}_0.$
Theorem~\ref{T-RPLcAp303} therefore
implies that $C^* (G, A, \alpha)$
can be locally approximated by unital algebras in ${\mathcal{C}}_0$
in the sense of Definition~\ref{D-LocApp-303},
with the additional restriction that all the homomorphisms
appearing in Definition~\ref{D-LocApp-303} are injective.
Since $C^* (G, A, \alpha)$ is separable,
it is now easy to see that,
in the sense of Definition~2.1 of~\cite{DP},
the crossed product $C^* (G, A, \alpha)$
has an exhaustive sequence of unital subalgebras in ${\mathcal{C}}_0.$

If $C^* (G, A, \alpha)$ were stable,
we could apply Corollary~5.11 of~\cite{DP}
to conclude that $C^* (G, A, \alpha)$
is a direct limit of algebras in ${\mathcal{C}}_0.$
However, according to Remark~5.13 of~\cite{DP},
the stability assumption in Corollary~5.11 of~\cite{DP}
is not necessary.
We therefore conclude that $C^* (G, A, \alpha)$
is a direct limit of algebras in ${\mathcal{C}}_0.$
Since $C^* (G, A, \alpha)$ is unital,
it follows that $C^* (G, A, \alpha)$
is a direct limit of unital algebras in ${\mathcal{C}}_0.$
Thus $C^* (G, A, \alpha) \in {\mathcal{C}}.$

Since corners of unital algebras in ${\mathcal{C}}_0$
are again unital and in ${\mathcal{C}}_0,$
it is easy to deduce that every corner of $C^* (G, A, \alpha)$
is in ${\mathcal{C}}.$
Proposition~\ref{P-FP303} therefore implies that
$A^{\af} \in {\mathcal{C}}.$
\end{proof}

Theorem~\ref{T:T1_0019} implies a classification result
for crossed products and fixed point algebras
of Rokhlin actions of finite groups on algebras in~${\mathcal{C}}.$
Specifically, the invariant ${\mathrm{Inv}}_{\mathrm{u}}$
described before Lemma~4.3 of~\cite{DP}
classifies such algebras,
because,
by Theorem~5.8 of~\cite{DP}
and the corresponding part of Remark~5.13 of~\cite{DP},
this invariant classifies algebras in~${\mathcal{C}}.$

We now turn to WB~algebras with the ideal property.
We recall the definition:

\begin{dfn}[Definition~4.3 of~\cite{CP}]\label{D:D2_0019}
A C*-algebra $A$ is a {\emph{WB~algebra}} if for
any ideal $I \subset A$ that is generated by its projections,
the extension
\[
0 \longrightarrow I
  \longrightarrow A
  \longrightarrow A/I
  \longrightarrow 0
\]
is quasidiagonal,
that is, there is an approximate identity for~$I$ consisting
of projections $(p_{\ld})_{\ld \in \Ld}$
(not necessarily countable or increasing)
such that
$\lim \| p_{\ld} a - a p_{\ld} \| = 0$ for all $a \in A.$
\end{dfn}

The WLB algebras of~\cite{CP} will be needed.

\begin{dfn}[Definition~4.3 of~\cite{CP}]\label{D-WLB311}
A C*-algebra $A$ is said to be a {\emph{WLB algebra}}
if $A$ has an approximate identity of projections
(not necessarily increasing)
and for every finite subset $F \subset A$
and every $\ep > 0,$
there is a WB~algebra~$B$ and a homomorphism
$\ph \colon B \to A$ such that $\dist( x, \ph (B)) < \ep$
for every $x \in F$
and
``the projections in $F$ can be $\ep$-lifted to projections in~$B$'',
that is, for every projection $p \in F,$
there is a projection $q \in B$
such that $\| p - \ph (q) \| < \ep.$
\end{dfn}

\begin{lem}\label{L-UnitalWB311}
Let $A$ be a unital C*-algebra.
Then $A$ is a WB~algebra if and only if $A$ is a WLB algebra.
\end{lem}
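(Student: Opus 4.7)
The plan is to prove the two implications separately; the forward direction is formal, while the converse rests on one small but critical observation.

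For \emph{WB implies WLB}, given a unital WB algebra~$A,$ we take $B = A$ and $\ph = \id_{A}$ in Definition~\ref{D-WLB311}: all distances become~$0,$ every projection lifts to itself, and the unit supplies the approximate identity of projections in~$A.$ Nothing beyond unwinding the definitions is needed.

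For \emph{WLB implies WB}, let $I \subset A$ be an ideal generated by its projections. By a standard reduction, a quasidiagonal approximate identity of~$I$ can be assembled once, for each finite family of projections $q_{1}, \ldots, q_{n} \in I,$ each finite set $F \subset A,$ and each $\ep > 0,$ we have produced a single projection $p \in I$ with $\|p q_{i} - q_{i}\| < \ep$ for all~$i$ and $\|p a - a p\| < \ep$ for all $a \in F.$ To build such a~$p,$ I would fix $\dt$ with $0 < \dt < \min(1, \ep / 3)$ and apply the WLB property to the set $F \cup \{q_{1}, \ldots, q_{n}\}$ with tolerance~$\dt,$ obtaining a WB algebra~$B,$ a homomorphism $\ph \colon B \to A,$ projection lifts $r_{i} \in B$ with $\|\ph(r_{i}) - q_{i}\| < \dt,$ and elements $b_{a} \in B$ with $\|\ph(b_{a}) - a\| < \dt.$ Next, applying the WB property of~$B$ to the ideal $J \subset B$ generated by $r_{1}, \ldots, r_{n}$ (which is itself generated by its projections), I would produce a projection $s \in J$ from its quasidiagonalizing approximate identity satisfying $\|s r_{i} - r_{i}\| < \dt$ for all~$i$ and $\|s b_{a} - b_{a} s\| < \dt$ for all $a \in F,$ and then set $p = \ph(s).$ The bounds $\|p q_{i} - q_{i}\| < 3 \dt < \ep$ and $\|p a - a p\| < 3 \dt < \ep$ are then routine triangle-inequality estimates from $\ph(s) \ph(r_{i}) = \ph(s r_{i})$ and $\ph(s) \ph(b_{a}) - \ph(b_{a}) \ph(s) = \ph([s, b_{a}]).$

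The main obstacle is verifying that $p$ actually belongs to~$I,$ rather than merely being close to it, and the whole point of the choice $\dt < 1$ is to resolve this. Indeed, if $\pi \colon A \to A / I$ denotes the quotient map, then $\pi \circ \ph$ is a $\ast$-homomorphism, so $\pi(\ph(r_{i}))$ is a projection in~$A / I$ of norm at most $\|\ph(r_{i}) - q_{i}\| < 1;$ since a projection in a C*-algebra has norm~$0$ or~$1,$ this forces $\pi(\ph(r_{i})) = 0,$ that is, $\ph(r_{i}) \in I$ for every~$i.$ The closed ideal of~$A$ generated by the $\ph(r_{i})$ is therefore contained in~$I,$ and since $s$ lies in the closed ideal of~$B$ generated by the $r_{i},$ the image $p = \ph(s)$ lies in~$I.$ To finish, the net of such projections, indexed by triples (finite set of projections of~$I,$ finite subset of~$A,$ positive~$\eta$) and directed by refinement and decreasing~$\eta,$ will be an approximate identity for~$I$ once one uses that $I$ is generated by its projections to approximate an arbitrary $x \in I$ by a finite sum $\sum a_{k} p_{k} b_{k}$ with the $p_{k} \in I$ projections; the estimate on $\|p x - x\|$ then follows from the domination and commutation properties built into the construction.
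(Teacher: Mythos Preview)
Your argument is correct. The forward direction is indeed trivial, and for the converse your construction does what it should: the key point that $\ph(r_i) \in I$ (via the ``projection of norm less than~$1$ is zero'' trick) is exactly what guarantees $p = \ph(s) \in I,$ and the triangle-inequality estimates and the assembly of the net into a quasidiagonalizing approximate identity are routine as you indicate.

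By contrast, the paper does not argue the converse at all: it simply invokes Corollary~4.5 of~\cite{CP}, which already states (for unital algebras) that WLB implies WB. So your proof is a self-contained reconstruction of what that external reference provides, whereas the paper treats the lemma as a bookkeeping restatement. What your approach buys is independence from~\cite{CP}; what the paper's approach buys is brevity. The underlying mechanism---lift the finitely many projections through the WLB approximation, use the WB property of the approximating algebra on the ideal they generate, and push the resulting almost-central projection back down---is the natural one and is presumably what underlies the cited corollary as well.
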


\begin{proof}
It is obvious that unital WB~algebras are WLB algebras,
and the reverse direction
(for unital algebras) follows from Corollary~4.5 of~\cite{CP}.
\end{proof}

Any LB~algebra (Definition~2.2 of~\cite{P2003}) is a WLB algebra,
so that unital LB~algebras are WB~algebras.
The class of LB~algebras contains the GAH algebras
(Definition~2.1 of~\cite{P2001}), and thus the AH~algebras.

\begin{lem}\label{L-CornerWB303}
Let $A$ be a WB~algebra,
and let $p \in A$ be a projection.
Then $p A p$ is a WB~algebra.
\end{lem}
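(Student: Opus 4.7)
The plan is to take an ideal $J \subset pAp$ generated by its projections, lift it to an ideal~$I$ of~$A$ generated by the same projections, apply the WB~property of~$A$ to~$I,$ and push the resulting quasi-central approximate identity of projections down to~$J$ by compression and continuous functional calculus. More precisely, let $P$ be the set of projections in~$J$ and put $I = \overline{APA}.$ Each $p_i \in P$ satisfies $p_i = p p_i p,$ so $p_i \le p$ in~$A$ and~$I$ is an ideal of~$A$ that is generated by its projections. The identity $p_i = p p_i p$ also gives $J = pIp.$

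Since $A$ is a WB~algebra, there is an approximate identity of projections $(e_{\ld})_{\ld \in \Ld}$ for~$I$ with $\| e_{\ld} a - a e_{\ld} \| \to 0$ for every $a \in A.$ Set $f_{\ld} = p e_{\ld} p;$ this lies in $pIp = J$ and is a positive contraction. Expanding $f_{\ld}^2 = p e_{\ld} p e_{\ld} p$ and using both $\| [e_{\ld}, p] \| \to 0$ and $e_{\ld}^2 = e_{\ld}$ gives $\| f_{\ld}^2 - f_{\ld} \| \to 0.$ Hence for large~$\ld$ the spectrum of~$f_{\ld}$ is disjoint from a neighborhood of~$\tfrac{1}{2},$ and continuous functional calculus with a function vanishing at~$0$ and equal to~$1$ near~$1$ produces a projection $q_{\ld} \in J$ with $\| q_{\ld} - f_{\ld} \| \to 0.$

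It then remains to show that $(q_{\ld})$ is a quasi-central approximate identity of projections for~$J$ in~$pAp.$ For the approximate identity property, take $x \in J;$ then $x = pxp \in I$ gives $e_{\ld} x \to x,$ and together with $\| [e_{\ld}, p] \| \to 0$ this yields $f_{\ld} x \to x,$ whence $q_{\ld} x \to x.$ For quasi-centrality in~$pAp,$ take $a \in pAp,$ so that $pa = a = ap;$ then $f_{\ld} a - a f_{\ld} = p e_{\ld} a - a e_{\ld} p,$ and $\| [e_{\ld}, a] \| \to 0$ gives $\| f_{\ld} a - a f_{\ld} \| \to 0,$ hence $\| q_{\ld} a - a q_{\ld} \| \to 0.$

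The main (minor) obstacle is the initial verification that the lifted ideal~$I$ is still generated by its projections so that the WB~hypothesis on~$A$ actually applies; this is where the assumption that~$J$ is generated by projections (rather than merely containing some) is used. Everything else is routine spectral theory and commutator estimates.
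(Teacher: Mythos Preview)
Your proof is correct and follows the same overall strategy as the paper: lift the ideal $J$ to $I=\overline{AJA}$ in~$A$, observe that $I$ is generated by its projections and satisfies $pIp=J$, and then transfer quasidiagonality of $0\to I\to A\to A/I\to 0$ down to the corner. The only difference is that the paper outsources the final step to Lemma~3.3 of~\cite{CP}, while you carry it out by hand via the compression $f_\lambda=pe_\lambda p$ and functional calculus; in effect you have reproved the relevant special case of that lemma.

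One small imprecision: in the quasi-centrality step you write that $\|[e_\lambda,a]\|\to 0$ alone gives $\|f_\lambda a-af_\lambda\|\to 0$, but the computation
\[
f_\lambda a - a f_\lambda
 = p e_\lambda a - a e_\lambda p
 = p[e_\lambda,a] + a[p,e_\lambda]
\]
also uses $\|[e_\lambda,p]\|\to 0$. You already invoked this earlier when showing $\|f_\lambda^2-f_\lambda\|\to 0$, and it is of course just the quasi-centrality hypothesis applied to $p\in A$, so nothing is missing---it simply deserves mention at this point as well.
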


\begin{proof}
Let $J \subset p A p$ be an ideal
which is generated by its projections.
We have to show that
the extension
\[
0 \longrightarrow J
  \longrightarrow p A p
  \longrightarrow p A p / J
  \longrightarrow 0
\]
is quasidiagonal.
Set $I = {\overline{A J A}} \subset A,$ an ideal in~$A$
which is generated by its projections,
and which satisfies $p I p = J.$
The corresponding extension is quasidiagonal by hypothesis,
so Lemma~3.3 of~\cite{CP} gives the desired conclusion.
\end{proof}

\begin{prp}\label{P:P2_0019}
Let $A$ be a unital WB~algebra with the ideal property,
let $G$ be a finite group,
and let $\af \colon G \to \Aut (A)$
be an action with the Rokhlin property.
Then $C^* (G, A, \alpha)$ and
$A^{\alpha}$
are also unital WB~algebras with the ideal property.
\end{prp}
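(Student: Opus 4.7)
The plan is to reduce, via Proposition~\ref{P-FP303}, to proving that $C^{*}(G, A, \alpha)$ is a unital WB-algebra with the ideal property. Then the fixed point algebra follows: $A^{\alpha}$ is a corner of $C^{*}(G, A, \alpha)$, corners of WB-algebras are WB by Lemma~\ref{L-CornerWB303}, and the ideal property is corner-hereditary. The main tool for the crossed product is Theorem~\ref{T-RPLcAp303}, which locally approximates $C^{*}(G, A, \alpha)$ by matrix algebras over corners of~$A$. I first establish the auxiliary lemma that $M_n(B)$ is a unital WB-algebra with the ideal property whenever $B$ is: ideals of $M_n(B)$ are of the form $M_n(I)$, and the ideal property propagates via the $(1,1)$-corner; for WB, if $K = M_n(I)$ is generated by projections, then the ideal property of $B$ forces $I$ to be generated by projections in $B$, so the WB property of $B$ produces a quasidiagonal approximate identity $(p_{\lambda})$ of projections for $I$ and $(\diag(p_{\lambda}, \ldots, p_{\lambda}))$ serves as one for~$K$. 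Applied with $B = eAe$ (which is a unital WB-algebra by Lemma~\ref{L-CornerWB303}, with the ideal property inherited from~$A$), this yields that each approximating algebra $M_n(eAe)$ in Theorem~\ref{T-RPLcAp303} is a unital WB-algebra with the ideal property.

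For the ideal property of $C^{*}(G, A, \alpha)$, since a Rokhlin action of a finite group is strongly pointwise outer, every ideal $J \subset C^{*}(G, A, \alpha)$ arises as $C^{*}(G, I, \alpha|_I)$ for some $\alpha$-invariant ideal $I \subset A$ (a standard ideal-correspondence result for pointwise outer actions of finite groups). The ideal property of $A$ then provides projections $(p_{\mu}) \subset I$ generating $I$; viewed as elements of $A \subset C^{*}(G, A, \alpha)$, they are projections in $J$ generating it as an ideal of $C^{*}(G, A, \alpha)$.

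For the WB property, Lemma~\ref{L-UnitalWB311} (together with the fact that $C^{*}(G, A, \alpha)$ is unital) reduces the problem to showing $C^{*}(G, A, \alpha)$ is WLB in the sense of Definition~\ref{D-WLB311}. The approximate identity of projections is furnished by the unit. Given finite $F \subset C^{*}(G, A, \alpha)$ and $\varepsilon > 0$, Theorem~\ref{T-RPLcAp303} provides $\varphi \colon B \to C^{*}(G, A, \alpha)$ with $B = M_n(eAe)$ a WB-algebra and $\dist(x, \varphi(B)) < \varepsilon / 3$ for every $x \in F$; arguing as in the proof of Theorem~\ref{T:T1_0019}, $\varphi$ can be arranged injective. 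For a projection $q \in F$, choose self-adjoint $b \in B$ with $\|q - \varphi(b)\| < \varepsilon / 3$; because $\varphi$ is isometric, $b$ has the same spectrum as $\varphi(b)$, which is concentrated near $\{0, 1\}$, so a continuous functional calculus cut-off produces a projection $q' \in B$ with $\|q - \varphi(q')\| < \varepsilon$. The main technical obstacle is arranging injectivity of the local approximations (needed for the spectral argument lifting projections); this is handled as in the proof of Theorem~\ref{T:T1_0019}, noting that for a Rokhlin action the Rokhlin projections yield injective corner embeddings of $M_{|G|}(eAe)$ into $C^{*}(G, A, \alpha)$ approximating any prescribed finite subset.
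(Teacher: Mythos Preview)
Your claim that ``the ideal property is corner-hereditary'' is false, and this is precisely what Example~\ref{E:M2} in the paper demonstrates: there $M_2(B)$ has the projection property (hence the ideal property) while its $(1,1)$ corner $B$ does not even have the ideal property. You invoke this twice --- once to pass the ideal property from $A$ to $eAe$, and once to pass it from $C^*(G,A,\alpha)$ to $A^{\alpha}$ --- and in both places the argument as written fails. The conclusion is salvageable, but only because in both places the ambient algebra is a \emph{WB~algebra} with the ideal property: Proposition~4.9 of~\cite{CP} (which the paper cites) shows that corners of such algebras again have both properties together. The paper's own commentary after the statement of Proposition~\ref{P:P2_0019} flags exactly this point: the method does not work for the ideal property alone.

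Your route to the ideal property of $C^*(G,A,\alpha)$ via strong pointwise outerness and Corollary~\ref{P:IPPP} is a valid alternative, which the paper explicitly acknowledges. The paper instead obtains it simultaneously with the WLB verification, by arranging that the local approximants themselves have the ideal property and invoking Corollary~2.5 of~\cite{CP}.

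Your handling of injectivity is also different from the paper's, and your justification is shaky. The appeal to ``the proof of Theorem~\ref{T:T1_0019}'' is misplaced: there injectivity is obtained because the building blocks are finite direct sums of simple Kirchberg algebras, so one can discard summands in the kernel. That argument is unavailable for $M_n(eAe)$. Your fallback claim that the Rokhlin projections yield injective embeddings is plausible but requires going inside the Osaka--Phillips construction rather than using Theorem~\ref{T-RPLcAp303} as a black box. The paper avoids this entirely: it takes $\varphi$ not necessarily injective, passes to the quotient $B = fM_n(A)f / \ker(\varphi)$, and uses Remark~4.2 of~\cite{CP} (quotients of WB~algebras with the ideal property are WB) to see that $B$ is still a WB~algebra with the ideal property; the induced map $\overline{\varphi}$ is then injective and the projection-lifting goes through.
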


Since the algebras involved are unital,
Lemma~\ref{L-UnitalWB311} implies that we get
an equivalent statement by replacing ``WB~algebra''
everywhere with ``WLB algebra''.

The method of proof does not work for the ideal property alone,
since, by Example~\ref{E:M2} below,
the ideal property is not preserved by passing to corners.
It follows from
Proposition~4.16 of~\cite{Phfgs} and Corollary~\ref{P:IPPP} below
that crossed products by Rokhlin actions of finite groups
on unital C*-algebras
do preserve the ideal property.
It is not clear what happens with fixed point algebras.
Example~\ref{E:TwoStep} shows that
the hypotheses of Corollary~\ref{P:IPPP}
are too weak to ensure that the fixed point algebra
has the ideal property.

\begin{proof}[Proof of Proposition~\ref{P:P2_0019}]
We first prove that
$C^* (G, A, \alpha)$ is a WLB algebra with the ideal property.
We show that $C^* (G, A, \alpha)$ is a WLB algebra
by proving the local approximation property in
Definition~\ref{D-WLB311}.
The approximating algebras $B$ will also have the ideal property,
so it will follow from Corollary~2.5 of~\cite{CP}
that $C^* (G, A, \alpha)$ has the ideal property.

Thus, let $F \subset C^* (G, A, \alpha)$ be finite and let $\ep > 0.$
Choose $\rh > 0$ with $\rh \leq \ep$
and so small that whenever $C$ is a C*-algebra,
$D \subset C$ is a subalgebra, $p \in C$ is a projection,
and $\dist (p, D) < \rh,$
then there is a projection $q \in D$ such that $\| p - q \| < \ep.$
Use Theorem~\ref{T-RPLcAp303}
to find $n \in \N,$ a projection $f \in M_n (A),$
and a homomorphism $\ph \colon f M_n (A) f \to C^* (G, A, \alpha)$
(not necessarily injective)
such that
$\dist \big( a, \, \ph (f M_n (A) f) \big) < \rh$ for all $a \in F.$
Set $B = f M_n (A) f / \ker (\ph),$
and let ${\overline{\ph}} \colon B \to C^* (G, A, \alpha)$
be the map induced by~$\ph.$

It follows from the proof of Lemma~4.14 of~\cite{CP}
that $M_n (A)$ is a WB~algebra,
and obviously $M_n (A)$ has the ideal property.
Therefore $f M_n (A) f$ is a WB~algebra
by Lemma~\ref{L-CornerWB303},
and has the ideal property by Proposition~4.9 of~\cite{CP}.
Since $f M_n (A) f$ has the ideal property,
Remark~4.2 of~\cite{CP} implies that $B$ is a WB~algebra,
and clearly $B$ has the ideal property.
The choice of $\rh$ implies that for every $x \in F$
we have $\dist( x, {\overline{\ph}} (B)) < \ep.$
Since ${\overline{\ph}}$ is injective,
it also follows that for every projection $p \in F,$
there is a projection $q \in B$
such that $\| p - \ph (q) \| < \ep.$
This completes the proof that
$C^* (G, A, \alpha)$ is a WLB algebra with the ideal property.

Lemma~\ref{L-UnitalWB311} now implies that
$C^* (G, A, \alpha)$ is a WB~algebra.

Proposition~\ref{P-FP303} exhibits $A^{\af}$
as a corner of $C^* (G, A, \alpha),$
so it follows from Proposition~4.9 of~\cite{CP}
that $A^{\alpha}$ is also a unital WB~algebra with the ideal property.
\end{proof}

\section{Strongly pointwise outer actions
  and the ideal and projection properties}\label{Sec:2-0303}

\indent
Strongly pointwise outer actions
(see Definition~\ref{D:SPOut} below)
are those for which no group element $g \neq 1$
is inner on any $g$-invariant subquotient.
When the C*-algebra is simple,
this condition is just pointwise outerness.

In this section we show that if $\af$ is a strongly
pointwise outer action of a finite group $G$ on a C*-algebra~$A,$
then $A$ separates the ideals
in $C^* (G, A, \af).$
It follows that such actions are hereditarily saturated
and their crossed products preserve the ideal and projection properties.

It seems plausible that crossed products and
fixed point algebras of arbitrary
actions of finite groups
should preserve the ideal and projection properties.
We have not settled the question for crossed products,
but we show by example that the statement about fixed point algebras
is false,
even when $G = \Z_2.$
Our construction further produces a C*-algebra $B$
such that $M_2 (B)$ has the projection property
but $B$ does not even have the ideal property.
This gives a negative answer to Question~6.8 of~\cite{CP}.

\begin{dfn}[Definition~4.11 of~\cite{Phfgs}]\label{D:SPOut}
An action $\af \colon G \to \Aut (A)$ is said to be
{\emph{strongly pointwise outer}} if,
for every $g \in G \setminus \{ 1 \}$ and any
two $\af_g$-invariant ideals $I \subset J \subset A$
with $I \neq J,$
the automorphism of $J / I$ induced by $\af_g$ is outer.
\end{dfn}

\begin{dfn}[\cite{Sr}]\label{D:S}
Let $\af \colon G \to \Aut (A)$ be an action of a
discrete group $G$ on a C*-algebra~$A.$
We say that $A$
{\emph{separates the ideals in the reduced crossed product}}
$C^*_{\mathrm{r}} (G, A, \alpha)$
(or in $C^* (G, A, \alpha)$ when $G$ is amenable)
if each ideal of $C^*_{\mathrm{r}} (G, A, \alpha)$
has the form $C^*_{\mathrm{r}} ( G, I, \af )$
for some $\af$-invariant ideal $I \subset A.$
\end{dfn}

By Proposition 7.7.9 of~\cite{Pd1},
for every $\af$-invariant ideal $I \subset A,$
the obvious map
$C^*_{\mathrm{r}} ( G, I, \af ) \to C^*_{\mathrm{r}} (G, A, \alpha)$
is injective.
Its image is clearly an ideal.

\begin{thm}\label{P:IdealPrsv}
Let $G$ be a finite group, let $A$ be a C*-algebra,
and let $\af \colon G \to \Aut (A)$
be a strongly pointwise outer action.
Then $A$ separates the ideals in $C^* (G, A, \af).$
\end{thm}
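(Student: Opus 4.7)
The plan is to reduce the ideal-separation statement to the assertion that every nonzero ideal of $C^* (G, B, \bt)$ meets $B$ nontrivially, whenever $\bt$ is a strongly pointwise outer action of a finite group on~$B$, and then to prove that assertion by a Kishimoto-style excision.

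For the reduction, let $J \subset C^* (G, A, \af)$ be an ideal and set $I = J \cap A$, which is $\af$-invariant. Amenability of $G$ makes the quotient
\[
\pi \colon C^* (G, A, \af) \longrightarrow C^* (G, A, \af) / C^* (G, I, \af) \cong C^* (G, A/I, \bar\af)
\]
well defined, and since each $u_g$ is a multiplier of $C^* (G, A, \af)$ one has $C^* (G, I, \af) \subset J$. A diagram chase using $J \cap A = I$ shows $\pi (J) \cap (A/I) = 0$: if $\pi (x) = \pi (a)$ with $x \in J$ and $a \in A$, then $x - a \in \ker \pi \subset J$, forcing $a \in J \cap A = I$ and $\pi (a) = 0$. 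The induced action $\bar\af$ is still strongly pointwise outer, since every $\bar\af_g$-invariant pair of ideals in $A/I$ pulls back to an $\af_g$-invariant pair in $A$ with the same subquotient. Granting the assertion and applying it to $\pi (J)$ gives $\pi (J) = 0$, hence $J = C^* (G, I, \af)$.

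To prove the assertion, take $0 \neq x \in K_+$ (after replacing by $x^* x$) and write $x = \sum_{g \in G} a_g u_g$. Faithfulness of $E$ on positives (valid for $G$ amenable) makes $a := a_1 = E (x) \neq 0$. The goal is to find $c \in B_+$ with $\| c a_g \bt_g (c) \| < \ep / |G|$ for every $g \neq 1$ and $\| c a c \| > \| a \| / 2$. Such a $c$ gives
\[
\| c x c - c a c \| = \Bigl\| \sum_{g \neq 1} c a_g \bt_g (c) u_g \Bigr\| < \ep,
\]
so $c a c \in B$ is within $\ep$ of $c x c \in K$; letting $\ep \to 0$ and using closedness of $K$ places a nonzero positive element of $B$ in~$K$. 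To build $c$, note that by strong pointwise outerness applied with trivial subquotient, each $\bt_g$ with $g \neq 1$ is properly outer on $B$ in the sense of Olesen-Pedersen, so their averaging lemma supplies, for every hereditary subalgebra $D \subset B$, every $y \in B$, and every $\dt > 0$, an element $d \in D_+$ of norm one with $\| d y \bt_g (d) \| < \dt$. A standard iteration of this lemma over the $|G| - 1$ nonidentity elements, performed inside a decreasing nest of hereditary subalgebras of $\overline{a B a}$ and exploiting that contractive cut-downs cannot undo bounds achieved at earlier stages, produces~$c$.

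The main obstacle is exactly this simultaneous excision: the single-automorphism Kishimoto-Olesen-Pedersen lemma is classical, but coordinating $|G| - 1$ applications into one $c$ that both kills every off-diagonal term and approximately preserves $a$ is the technical heart of the argument. The iteration must be arranged inside a hereditary subalgebra of $\overline{a B a}$ so that $\| c a c \|$ does not collapse, and this is where the strong pointwise outer hypothesis is essential, both in supplying proper outerness of each $\bt_g$ on $B$ itself and, via the reduction step, in ensuring proper outerness on every $\af_g$-invariant subquotient of $A$ that arises on passing to $A/I$.
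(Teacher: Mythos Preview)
Your reduction to the quotient $B = A/I$ matches the paper's exactly, including the observation that strong pointwise outerness passes to $A/I$. For the detection statement---every nonzero ideal of $C^*(G, B, \bt)$ meets $B$---the paper simply cites Theorem~1.1 of Rieffel~\cite{RfFG}, which gives exactly this conclusion under the hypothesis that no $\bt_g$ ($g \neq 1$) is partly inner; that hypothesis follows at once from strong pointwise outerness by taking the lower ideal in Definition~\ref{D:SPOut} to be zero.

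You instead sketch a Kishimoto-type excision proof of the detection statement. Two problems. First, the sentence ``letting $\ep \to 0$ and using closedness of $K$ places a nonzero positive element of $B$ in $K$'' is incorrect: the element $c$, and hence $cac$, varies with $\ep$, so there is no fixed element of $B$ being approximated by elements of $K$. The right finish is to note that if $K \cap B = 0$ then the quotient map onto $C^*(G,B,\bt)/K$ is injective, hence isometric, on $B$; but the image of $cac$ coincides with that of $cac - cxc$, forcing $\|cac\| < \ep$, a contradiction once $\ep < \|a\|/2$. Second, and more substantively, your justification for the excision lemma is inadequate. Invoking strong pointwise outerness ``with trivial subquotient'' only yields that each $\bt_g$ is outer on every $\bt_g$-invariant ideal---Rieffel's ``purely outer'', i.e., not partly inner---and not Elliott's \emph{properly outer}, which is the standing hypothesis of Kishimoto's lemma in the form you quote (arbitrary hereditary $D$). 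These two notions do not obviously coincide for nonsimple algebras. Obtaining the required excision from merely ``not partly inner'' is precisely the substantive content of Rieffel's Theorem~1.1 that the paper is citing; your sketch thus defers the real work to the same source while mislabelling its hypothesis.
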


\begin{proof}
Let $J$ be an ideal in $C^* (G, A, \af).$
Set $I = J \cap A.$
We claim that $J$ is an $\af$-invariant ideal in~$A$
such that $C^* ( G, I, \af ) \subset J.$
For $\af$-invariance,
let $a \in I$ and $g \in G.$
Then $\af_g (a) = u_g a u_g^*$ is in both $J$ and~$A.$
If $a \in I$ and $g \in G,$
then $a \in J,$ so $a u_g \in J.$
Thus $C^* ( G, I, \af ) \subset J,$
The claim is proved.

Suppose $C^* ( G, I, \af ) \neq J.$
Let $B = A / I,$
and let $\pi \colon A \to B$ be the quotient map.
Deviating from our usual convention
(to avoid confusion below),
let $\bt \colon G \to \Aut (B)$
be the induced action on the quotient.
Let $\rh \colon C^* (G, A, \af) \to C^* (G, B, \bt)$
be the map induced by~$\pi.$
Then $\ker (\rh) = C^* ( G, I, \af ).$
Let $L = \rh (J).$
Then $L$ is a nonzero ideal of $C^* (G, B, \bt),$
but $L \cap B = \{ 0 \}.$
It follows from Theorem~1.1 of~\cite{RfFG}
that there is $g \in G \setminus \{ 1 \}$ such that
$\bt_g$ is partly inner in the sense of~\cite{RfFG},
that is,
there is some nonzero $\bt_g$-invariant ideal $T \subset B$
such that the restriction of $\bt_g$ to $T$
is inner (in $M (T)$).
This contradicts the assumption that
$\af$ is strongly pointwise outer.
\end{proof}

Examples 4.13 and 4.14 of~\cite{Phfgs}
show that several weaker versions of Definition~\ref{D:SPOut}
do not suffice for the conclusion of Theorem~\ref{P:IdealPrsv}.
In particular,
one must consider subquotients (not just ideals and quotients),
and one must consider subquotients invariant under subgroups,
not just subquotients invariant under the whole group.

\begin{cor}\label{C:Sat}
Let $G$ be a finite group, let $A$ be a C*-algebra,
and let $\af \colon G \to \Aut (A)$
be a strongly pointwise outer action.
Then $\af$ is hereditarily saturated
(Definition~7.2.2 of~\cite{Ph1})
and the strong Connes spectrum ${\widetilde{\Gm}} (\af)$
(Definition~1.2(b) of~\cite{GLP}) is equal to~${\widehat{G}},$
the space of unitary equivalence classes
of irreducible representations of $G.$
\end{cor}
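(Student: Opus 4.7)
The plan is to derive both assertions from Theorem~\ref{P:IdealPrsv}, applied not only to $\af$ on~$A$ itself, but to its restriction to every $\af$-invariant hereditary subalgebra of~$A.$

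First I would check that strong pointwise outerness is inherited by $\af$-invariant hereditary subalgebras. Let $B \subset A$ be such a subalgebra, let $g \in G \setminus \{1\},$ and let $I \subsetneq J$ be $\af_g$-invariant ideals of~$B.$ Set $I' = \overline{AIA}$ and $J' = \overline{AJA};$ these are $\af_g$-invariant ideals of~$A$ satisfying $I' \cap B = I$ and $J' \cap B = J,$ and $J/I$ embeds as a full hereditary subalgebra of $J'/I'.$ If the automorphism induced by $\af_g$ on $J/I$ were inner, implemented by a unitary $u \in M(J/I),$ then by Brown's theorem (identifying multipliers of a full hereditary subalgebra with those of the enveloping algebra) one obtains a unitary in $M(J'/I')$ implementing $\af_g$ on the $\af_g$-invariant subquotient $J'/I'$ of~$A,$ contradicting strong pointwise outerness of $\af$ on~$A.$ Hence $\af|_B$ is strongly pointwise outer, and Theorem~\ref{P:IdealPrsv} applies to give that $B$ separates the ideals in $C^* (G, B, \af|_B).$

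Having established this uniform ideal-separation property, I would then match it with the two stated conclusions. Hereditary saturation (Definition~7.2.2 of~\cite{Ph1}) of an action of a finite group amounts, by definition, to saturation of the restriction to every $\af$-invariant hereditary subalgebra, and saturation in this setting is equivalent to crossed-product ideals being generated by their intersections with the base algebra (there is no distinction between full and reduced crossed products since $G$ is amenable). This is exactly what the previous step provides. For the strong Connes spectrum, $\tilde{\Gm}(\af) = \widehat{G}$ holds — by the analysis summarized around Definition~1.2(b) of~\cite{GLP} — precisely when, for every $\af$-invariant hereditary subalgebra $B \subset A,$ every nonzero ideal of $C^* (G, B, \af|_B)$ meets $B$ nontrivially, and this is again immediate.

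The main obstacle is the first step: verifying that an inner implementation on the full hereditary subalgebra $J/I$ of the subquotient $J'/I'$ of~$A$ genuinely extends to an inner implementation on $J'/I'.$ This is a standard consequence of Brown's theorem on full hereditary subalgebras, but care is required because $J'/I'$ is typically non-simple and non-unital, so the extended unitary lives in the multiplier algebra of $J'/I'$ and one must check that it really implements the induced action of $\af_g$ there.
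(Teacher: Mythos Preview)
Your approach differs from the paper's, which is a one-line citation: Theorem~\ref{P:IdealPrsv} gives that $A$ separates the ideals in $C^*(G,A,\af)$, and the equivalence of this property with both hereditary saturation and ${\widetilde{\Gm}}(\af) = {\widehat{G}}$ is already packaged in Theorem~5.11 of~\cite{Phfgs} (drawing on~\cite{GLP}). No passage to invariant hereditary subalgebras is needed.

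Your detour has a real gap in Step~1. The parenthetical ``identifying multipliers of a full hereditary subalgebra with those of the enveloping algebra'' is false: for $D = \C p \subset K$ with $p$ a rank-one projection, $M(D) = \C$ while $M(K) = B(H)$. Brown's theorem gives stable isomorphism, not a multiplier-algebra identification, so it does not by itself transport an implementing unitary from $M(J/I)$ to $M(J'/I')$. The underlying claim (an automorphism whose restriction to a full hereditary subalgebra is inner must itself be inner) can be proved via the imprimitivity bimodule $\overline{(J'/I')(J/I)}$, but that is a genuinely different argument from the one you sketch, and it is exactly the point you flag as the main obstacle. There is also a slip in Step~3: saturation is not equivalent to ideal separation (an inner action of $\Z_2$ on $M_2$ is saturated, yet $M_2$ does not separate the ideals of $C^*(\Z_2, M_2) \cong M_2 \oplus M_2$); only the implication you actually use holds. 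If you wish to avoid citing Theorem~5.11 of~\cite{Phfgs} as a black box, a cleaner route than your Step~1 is to note that ideal separation for~$A$ directly yields ideal separation for every $\af$-invariant hereditary~$B$: the subalgebra $C^*(G,B,\af|_B)$ is hereditary in $C^*(G,A,\af)$, and intersecting $C^*(G,I,\af)$ with it gives $C^*(G, I\cap B, \af|_B)$. This bypasses the innerness-extension issue entirely.
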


\begin{proof}
This follows from Theorem~\ref{P:IdealPrsv} and~\cite{GLP},
as described in Theorem~5.11 of~\cite{Phfgs}.
\end{proof}

As an another immediate consequence of Theorem~\ref{P:IdealPrsv},
we obtain the following result,
which we originally proved directly from the definition.

\begin{cor}\label{P:IdealFGpRP}
Let $G$ be a finite group, let $A$ be a unital C*-algebra,
and let $\af \colon G \to \Aut (A)$
be an action with the Rokhlin property.
Then $A$ separates the ideals in $C^* (G, A, \af).$
\end{cor}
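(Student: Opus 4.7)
The plan is to recognize this as an immediate specialization of Theorem~\ref{P:IdealPrsv}. The only thing to check is that every Rokhlin action of a finite group on a unital C*-algebra is automatically strongly pointwise outer in the sense of Definition~\ref{D:SPOut}; once that implication is in hand, Theorem~\ref{P:IdealPrsv} applies verbatim and yields the conclusion with no further work.

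This implication is Proposition~4.16 of~\cite{Phfgs}, so the cleanest route is to cite that result and then invoke Theorem~\ref{P:IdealPrsv}. For a sketch of the underlying argument, one fixes $g \in G \setminus \{1\}$ and $\af_g$-invariant ideals $I \subsetneq J \subset A,$ and assumes for contradiction that the induced automorphism $\overline{\af}_g$ of $J/I$ is inner, implemented by a unitary $v$ in the multiplier algebra $M(J/I).$ One picks $a \in J_{+}$ with $\|a + I\| = 1$ and applies the Rokhlin property in $A$ with small tolerance $\ep$ to a finite set containing $a$ together with an element of $A$ that controls $v$ modulo $I.$ This produces orthogonal projections $e_h \in A$ with $\sum_h e_h = 1,$ $\af_h(e_k) \approx e_{hk},$ and approximate commutation with $a.$ Comparing $\overline{\af}_g(e_1 a e_1),$ which is approximately $e_g \af_g(a) e_g$ modulo $I,$ with the inner expression $v (e_1 a e_1) v^*,$ and exploiting $e_1 e_g = 0,$ forces $e_1 a e_1$ into $I$ up to arbitrarily small error; summing over the $\langle g \rangle$-orbit of $e_1$ then contradicts $a \notin I.$

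The main obstacle in a self-contained treatment is the bookkeeping in the multiplier algebra: $J/I$ is generically nonunital, the unitary $v$ lives in $M(J/I)$ rather than in $A$ or $A/I,$ and the identity $\af_g(x) \equiv v x v^{*} \pmod{I}$ is available only for $x \in J.$ Lifting $v$ (or the relevant compressions $e_1 v e_1$) to elements of $A$ in a way compatible with the Rokhlin projections is the technical step that is dispatched in~\cite{Phfgs}. With that reference in place, the corollary reduces to a one-line application of Theorem~\ref{P:IdealPrsv}.
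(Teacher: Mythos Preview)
Your proposal is correct and matches the paper's proof exactly: the paper's argument is simply ``Use Proposition~4.16 of~\cite{Phfgs} and Theorem~\ref{P:IdealPrsv}.'' Your additional sketch of why the Rokhlin property forces strong pointwise outerness goes beyond what the paper provides, but the logical structure of the corollary is identical.
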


\begin{proof}
Use Proposition~4.16 of~\cite{Phfgs} and Theorem~\ref{P:IdealPrsv}.
\end{proof}

\begin{cor}\label{P:IPPP}
Crossed products
by strongly pointwise outer actions of finite groups
preserve the ideal property and the projection property.
\end{cor}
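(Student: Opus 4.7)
The plan is to derive both statements directly from Theorem~\ref{P:IdealPrsv}, which identifies every ideal of $C^* (G, A, \af)$ with the crossed product by the restricted action on some $\af$-invariant ideal of~$A$. So let $J$ be an ideal of $C^* (G, A, \af),$ and use Theorem~\ref{P:IdealPrsv} to write $J = C^* ( G, I, \af )$ for some $\af$-invariant ideal $I \subset A.$ Because closed two-sided ideals of~$I$ coincide with closed two-sided ideals of~$A$ contained in~$I$ (using an approximate identity of~$I$ to absorb elements of~$A$), the ideal~$I$ inherits the ideal property, respectively the projection property, from~$A.$

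For the ideal property, I would let $P$ denote the set of projections of~$I,$ all of which lie in~$J.$ The ideal property applied to~$I$ says that $P$ generates~$I$ as an ideal of~$A.$ Since the standard unitaries $u_g$ lie in $M ( C^* (G, A, \af) )$ and normalize~$A,$ the ideal $K$ of $C^* (G, A, \af)$ generated by~$P$ contains~$I,$ is stable under left and right multiplication by the $u_g,$ and therefore contains every $i u_g$ with $i \in I$ and $g \in G.$ Hence $K \supset C^* ( G, I, \af ) = J.$ The reverse inclusion $P \subset J$ is obvious, so $K = J,$ which shows that $J$ is generated as an ideal by projections in~$J,$ as required.

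For the projection property, let $(p_\ld)_{\ld \in \Ld}$ be an increasing approximate identity of projections for~$I,$ supplied by the projection property of~$I.$ I claim that $(p_\ld)$ is already an increasing approximate identity of projections for~$J.$ Monotonicity and the projection condition are inherited from~$I \subset J.$ For the approximate-identity condition, it suffices to check on elements of the form $y = i u_g$ with $i \in I$ and $g \in G.$ On the left, $p_\ld y = (p_\ld i) u_g \to i u_g = y.$ On the right, the $\af$-invariance of~$I$ gives $\af_g^{-1}(i) \in I,$ and a direct computation yields
\[
y p_\ld \, = \, i u_g p_\ld \, = \, i \af_g (p_\ld) u_g \, = \, \af_g \bigl( \af_g^{-1}(i) \, p_\ld \bigr) u_g \, \longrightarrow \, \af_g \bigl( \af_g^{-1}(i) \bigr) u_g \, = \, y,
\]
using that $\af_g$ is isometric. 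Passing to finite sums of the form $\sum_g i_g u_g$ and then to norm closure completes the verification.

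The essential difficulty is absorbed by Theorem~\ref{P:IdealPrsv}; after that reduction, the remaining work is routine C*-algebraic manipulation. The one genuine subtlety is that in the projection property case the action must move the approximate identity within the ideal, which is exactly what the $\af$-invariance of~$I$ supplies.
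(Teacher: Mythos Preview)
Your proof is correct and follows the same route as the paper: both invoke Theorem~\ref{P:IdealPrsv} to write $J = C^* (G, I, \af)$ and then transfer projections (or an approximate identity of projections) from $I$ to~$J.$ The paper simply asserts that this last step ``is easy to see,'' while you spell out the details; in particular your verification that $(p_\ld)$ is an approximate identity on elements $i u_g$ is exactly what the paper suppresses.
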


\begin{proof}
Let $\af \colon G \to \Aut (A)$ be a strongly pointwise outer action
of a finite group $G$ on a C*-algebra~$A.$

Assume that $A$ has the ideal property.
Let $J \subset C^* (G, A, \alpha)$ be an ideal.
By Theorem~\ref{P:IdealPrsv},
there is a $G$-invariant ideal $I \subset A$
such that $J = C^* (G, I, \alpha).$
By hypothesis, $I$~is generated by its projections,
and it is then easy to see that $C^* (G, I, \alpha)$
is generated by its projections.

The argument for the projection property is the same.
\end{proof}

% \begin{rmk}\label{R-PrsvPP-304}
We do not know of any example of any action at all of a finite group
on a C*-algebra with the ideal property such that the crossed product
does not have the ideal property.
Similarly,
we do not know of any example of any action at all of a finite group
on a C*-algebra with the projection property
such that the crossed product
does not have the projection property.
% \end{rmk}

The following example shows that
the ideal and projection properties do not pass
to fixed point algebras of actions of finite groups.
In fact,
we produce an example of
a pointwise outer (but not strongly pointwise outer)
action of~$\Z_2$
on a C*-algebra with the projection property
such that the fixed point algebra
does not even have the ideal property.

\begin{exa}\label{E:TwoStep}
Let $D$ be the unital Kirchberg algebra satisfying the
Universal Coefficient Theorem and with
$K_0 (D) = 0$ and $K_1 (D) \cong {\mathbb{Z}}_{2}.$
For any C*-algebra~$E,$
let $Q (E) = M (E) / E,$
the corona algebra.
It follows from Theorem~3.3 of~\cite{Z2} that
$Q (K \otimes D)$ is simple,
and from Theorem~1.3 of~\cite{Z1} that
$Q (K \otimes D)$ is purely infinite.
Since $K_* (M (K \otimes D)) = 0$
(by Proposition 12.2.1 of~\cite{Bl}),
the six term exact sequence in K-theory gives
$K_0 (Q (K \otimes D)) \cong {\mathbb{Z}}_{2}$
and $K_1 (Q (K \otimes D)) = 0.$
Let $\pi \colon M (K \otimes D) \to Q (K \otimes D)$
be the quotient map.
Since $1 \in Q (K \otimes D)$ is equal to $\pi (1),$
and $K_0 (M (K \otimes D)) = 0,$
it follows that $[1] = 0$ in $K_0 (Q (K \otimes D)).$

Since $Q (K \otimes D)$ is purely infinite and simple,
there is a projection $e \in Q (K \otimes D)$ whose class
in $K_0 (Q (K \otimes D))$ is the nontrivial element,
and any two such projections are Murray-von Neumann equivalent.
Since $[1 - e] = [1] - [e] = - [e] = [e],$
there is $s \in Q (K \otimes D)$
such that $s^* s = e$ and $s s^* = 1 - e.$
Set $v = s + s^*,$
which is a unitary in $Q (K \otimes D)$ such that
$v^* = v$ and $v e v^* = 1 - e.$
Since $v$ is a selfadjoint unitary,
it is in the identity component
of the unitary group of $Q (K \otimes D),$
and therefore there is a unitary $u \in M (K \otimes D)$
such that $\pi (u) = v.$
(We can't choose $u$ to satisfy $u^2 = 1.$)

Set $C = \C e + \C (1 - e),$
which is a subalgebra of $Q (K \otimes D)$ with $C \cong \C \oplus \C.$
Define a subalgebra $B \subset M (K \otimes D) \oplus M (K \otimes D)$
by
\[
B = \big\{ (b_1, b_2) \in M (K \otimes D) \oplus M (K \otimes D) \colon
  {\mbox{$\pi (b_1), \pi (b_2) \in C$ and $\pi (b_1) = \pi (b_2)$}}
             \big\}.
\]
Let $\kp \colon B \to C$ be the map
$\kp (b_1, b_2) = \pi (b_1).$
Then there is a short exact sequence
\begin{equation}\label{Eq:SES}
0 \longrightarrow K \otimes D \oplus K \otimes D
  \longrightarrow B
  \stackrel{\kp}{\longrightarrow} C
  \longrightarrow 0.
\end{equation}

Define $\gm \in \Aut (C)$ by
\[
\gm \big( \ld_1 e + \ld_2 (1 - e) \big) = \ld_2 e + \ld_1 (1 - e)
\]
for $\ld_1, \ld_2 \in \C.$
Then $v c v^* = \gm (c)$ for all $c \in C.$
Define $\bt \colon B \to B$ by
\[
\bt (b_1, b_2) = (u b_2 u^*, \, u^* b_1 u).
\]
We claim that if $(b_1, b_2) \in B,$
then $\bt (b_1, b_2)$ really is in~$B.$
Indeed, using $v = v^*$ and $\pi (b_1) = \pi (b_2)$
at the second step, we have
\[
\pi (u b_2 u^*)
 = v \pi (b_2) v^*
 = v^* \pi (b_1) v
 = \pi (u b_1 u^*).
\]
Also, $\pi (u b_2 u^*) \in C$ because $v C v^* = C.$
This proves the claim.
One checks immediately that $\bt^2 = \id_B,$
so $\bt$ is invertible
Also,
since for $c \in C$ we have $v c v^* = \gm (c),$
for $(b_1, b_2) \in B$ we get
\[
(\kp \circ \bt) (b_1, b_2)
 = \pi (u b_2 u^*)
 = v \pi (b_2) v^*
 = v \pi (b_1) v^*
 = (\gm \circ \kp) (b_1, b_2).
\]
Let $\ph \in \Aut (K \otimes D \oplus K \otimes D)$
be the restriction of~$\bt.$
Then $\ph,$ $\bt,$ and $\gm$ all define actions of ${\mathbb{Z}}_{2},$
which we denote by the same letters,
and the exact sequence~(\ref{Eq:SES}) is equivariant.

We take crossed products in~(\ref{Eq:SES}).
We have $C^* ({\mathbb{Z}}_{2}, C, \gm) \cong M_2.$
The automorphism $\sm$ of $K \otimes D \oplus K \otimes D$
defined by
\[
\sm (b_1, b_2) = (u^* b_1 u, \, b_2)
\]
satisfies
\[
\big( \sm \circ \bt \circ \sm^{-1} \big)
           (b_1, b_2)
  = (b_2, b_1),
\]
so, defining
$\rh = \sm \circ \bt \circ \sm^{-1},$
we have
\[
C^*
 \big( {\mathbb{Z}}_{2}, \, K \otimes D \oplus K \otimes D, \, \rh \big)
  \cong M_2 \otimes K \otimes D.
\]
Letting $A = C^* ({\mathbb{Z}}_{2}, B, \bt),$
we therefore get a short exact sequence,
equivariant for the dual actions:
\begin{equation}\label{Eq:CPS}
0 \longrightarrow M_2 \otimes K \otimes D
  \longrightarrow A
  \longrightarrow M_2
  \longrightarrow 0.
\end{equation}

We claim that $A$ has the projection property.
{}From the sequence~(\ref{Eq:CPS}),
we see that $A$ has only two nonzero ideals,
namely $M_2 \otimes K \otimes D$ and $A$ itself.
The algebra $A$ is unital,
and $M_2 \otimes K \otimes D$
has an increasing approximate identity consisting of projections
because $D$ is unital.
The claim follows.

We claim that $B$ does not have the ideal property.
To prove this, we set $J = \kp^{-1} (\C e),$
which is an ideal in $B,$
and show that $J$ is not generated by its projections.
It suffices to show that if $p \in J$ is a projection,
then $\kp (p) = 0.$
Write $p = (p_1, p_2)$ with $p_1, p_2 \in M (K \otimes D).$
Then $\pi (p_1)$ is a projection in $\C e,$
and so $\pi (p_1) = e$ or $\pi (p_1) = 0.$
However, $\pi (p_1) = e$ is ruled out by $[p_1] = 0$ in
$K_0 (M (K \otimes D))$ and $[e] \neq 0$ in $K_0 (Q (K \otimes D)).$
So $\kp (p_1, p_2) = \pi (p_1) = 0.$
This proves the claim.

Let $\af = {\widehat{\bt}}$ be the dual action,
and identify ${\widehat{{\mathbb{Z}}_{2}}}$ with ${\mathbb{Z}}_{2}.$
Then $A^{\af} = B.$
Accordingly, we have an action of ${\mathbb{Z}}_{2}$ on a C*-algebra~$A$
with the projection property
(and, in particular, the ideal property)
such that $A^{\af}$ does not have the ideal property
(and, in particular, does not have the projection property).

As preparation for proving that $\af$ is pointwise outer,
we claim that the center of~$A$ is $\C \cdot 1.$
It is clear that $M_2 \otimes K \otimes D$ is an essential ideal in~$A,$
so we can identify $A$
with a subalgebra of $M (M_2 \otimes K \otimes D)$
which contains $M_2 \otimes K \otimes D.$
Since $M_2 \otimes K \otimes D$ is simple,
\[
\big\{ z \in M (M_2 \otimes K \otimes D) \colon
  {\mbox{$z b = b z$ for all $b \in M_2 \otimes K \otimes D$}} \big\}
 = \C \cdot 1.
\]
The claim follows.

Now let $\ph \in \Aut (A)$ be the automorphism of order two
which generates the action~$\af$;
we prove that $\ph$ is outer.
If not,
there is a unitary $u \in A$ such that
$\ph (a) = u a u^*$ for all $a \in A.$
Then $\ph (u) = u,$
so $u^2 a u^{-2} = \ph^2 (a) = a$ for all $a \in A.$
By the previous paragraph,
there is $\ld_0 \in \C$ with $| \ld_0 | = 1$
such that $u^2 = \ld_0 \cdot 1.$
Choose $\ld \in \C$ such that $\ld^{-2} = \ld_0$ and set $v = \ld u.$
Then $v^2 = 1$ and
$\ph (a) = v a v^*$ for all $a \in A.$
So $\af$ is an inner action.
Therefore $A \oplus A \cong C^* (\Z_2, A, \af) \cong M_2 (B).$
Since $B$ has no nontrivial direct sum decomposition,
this contradicts innerness of~$\ph,$
and shows that $\af$ is pointwise outer.
\end{exa}

Question~6.8 of~\cite{CP} asked the following.
Let $A$ be a C*-algebra, let $n \in \N,$
and suppose that $M_{n} (A)$ has the ideal property.
Does it follow that $A$ has the ideal property?
It is also natural to ask the same question
for the projection property in place of the ideal property.
The following example provides negative answers to both questions.

\begin{exa}\label{E:M2}
We use the notation of Example~\ref{E:TwoStep} throughout.
We claim that $M_2 (B)$ does have the projection property.
Thus, $M_2 (B)$ can have the projection property
when $B$ does not even have the ideal property.

To prove the claim, we consider the nonzero ideals in~$B.$
There are six of them:
\[
K \otimes D \oplus \{ 0 \},
\,\,\,\,\,\,
\{ 0 \} \oplus K \otimes D,
\,\,\,\,\,\,
K \otimes D \oplus K \otimes D,
\,\,\,\,\,\,
\kp^{-1} (\C e),
\,\,\,\,\,\,
\kp^{-1} (\C (1 - e)),
\,\,\,\,\,\,
B.
\]
For each ideal $J$ on this list,
we need to show that $M_2 (J)$
has an increasing approximate identity consisting of projections.
For all but $\kp^{-1} (\C e)$ and $\kp^{-1} (\C (1 - e)),$
this is immediate.
Moreover,
with $\bt$ as in Example~\ref{E:TwoStep},
we have $\bt ( \kp^{-1} (\C e) ) = \kp^{-1} (\C (1 - e)).$
Therefore it suffices to consider $\kp^{-1} (\C e).$

By abuse of notation, given a homomorphism $\ph \colon E \to F,$
we use the same letter for the corresponding homomorphism
from $M_2 (E)$ to $M_2 (F).$

Define $w \in M_2 (Q (K \otimes D))$ by
\[
w = \left( \begin{matrix}
  e       &  s        \\
  s^*     &  1 - e
\end{matrix} \right).
\]
One checks that $w$ is a selfadjoint unitary,
and that
\[
w \left( \begin{matrix}
  e     &  0        \\
  0     &  e
\end{matrix} \right)
 w^*
   = \left( \begin{matrix}
  1     &  0        \\
  0     &  0
\end{matrix} \right).
\]
Since $w$ is a selfadjoint unitary,
$w$ is in the
identity component of the unitary group of $M_2 (Q (K \otimes D)).$
Therefore there is a unitary $y \in M_2 (M (K \otimes D))$
such that $\pi (y) = w.$
Define  $p \in M_2 (M (K \otimes D))$ by
\[
p = y^*  \left( \begin{matrix}
  1     &  0        \\
  0     &  0
\end{matrix} \right) y.
\]
Then $p$ is a projection and
\[
\pi (p) = \left( \begin{matrix}
  e     &  0        \\
  0     &  e
\end{matrix} \right).
\]
Moreover,
$1 - p$ is unitarily equivalent to
$1 \oplus 0,$
% $\left( \begin{matrix}
%   1     &  0        \\
%   0     &  0
% \end{matrix} \right),$
from which it follows that
\[
(1 - p) M_2 (K \otimes D) (1 - p) \cong K \otimes D
\]
has an increasing approximate identity $(f_n)_{n \in \N}$
consisting of projections.

Define $q_n \in M_2 (\kp^{-1} (\C e))$
by
\[
q_n = ( p + f_n, \, p + f_n ).
\]
We claim that $(q_n)_{n \in \N}$
is an approximate identity for $M_2 (\kp^{-1} (\C e)).$
We begin by observing that, for any $a \in M_2 (\pi^{-1} (\C e)),$
we have $\pi ((1 - p) a) = 0,$
so
\[
(1 - p) a a^* (1 - p) \in (1 - p) M_2 (K \otimes D) (1 - p).
\]
Thus
\begin{align*}
& \big\| \big[ f_n (1 - p) a - (1 - p) a \big]
  \cdot \big[ f_n (1 - p) a - (1 - p) a \big]^* \big\|
      \\
& \hspace*{6em} {\mbox{}}
 \leq \big\| f_n (1 - p) a a^* (1 - p) - (1 - p) a a^* (1 - p) \big\|
              \cdot \| f_n \|
      \\
& \hspace*{9em} {\mbox{}}
     + \big\| f_n (1 - p) a a^* (1 - p) - (1 - p) a a^* (1 - p) \big\|,
\end{align*}
which converges to zero as $n \to \infty.$
So
\[
\limi{n} \| f_n (1 - p) a - (1 - p) a \| = 0.
\]
Using $f_n (1 - p) = f_n,$
for $a \in M_2 (\pi^{-1} (\C e))$ we therefore get
\[
\limi{n} \big( (p + f_n) a - a \big)
 = \limi{n} \big( f_n (1 - p) a - (1 - p) a \big)
 = 0.
\]
Since
\[
M_2 (\kp^{-1} (\C e))
 = \big\{ (b_1, b_2) \in M_2 (\pi^{-1} (\C e)) \colon
     \pi (b_1) = \pi (b_2) \big\},
\]
we immediately get $\limi{n} (q_n b - b) = 0$
for all $b \in M_2 (\kp^{-1} (\C e)).$
Taking adjoints,
we also get $\limi{n} (b q_n - b) = 0$
for all $b \in M_2 (\kp^{-1} (\C e)).$
So  $(q_n)_{n \in \N}$
is an approximate identity, as desired.
This completes the proof that $M_2 (B)$ has the projection property.
\end{exa}

\section{Topological dimension zero}\label{Sec:TopDimZero}

\indent
In this section,
we prove that arbitrary crossed products
by actions of finite abelian groups
preserve the property topological dimension zero of~\cite{BP}.
(See Definition~\ref{D-TDZero311}.)
See the introduction for some of the significance of this condition.
Our proof depends on duality.
It seems plausible that noncommutative duality could be used
to extend the result to nonabelian groups,
but we have not succeeded in carrying this out.
We do show (with a much easier proof)
that the result holds for nonabelian groups if in addition
the algebra separates the ideals in the crossed product.

In the following,
topological spaces need not be Hausdorff unless otherwise specified.
A compact set is one which has the Heine-Borel property,
regardless of whether or not it is closed or Hausdorff.
(This property is sometimes called quasicompactness.)
We repeat these statements for emphasis
in some of the definitions and lemmas.
We give the following definition to make our terminology clear.

\begin{dfn}[Compare with 3.3.8 of~\cite{Dx}]\label{D-NT2LocCpt311}
Let $X$ be a not necessarily Hausdorff topological space.
We say that $X$ is
{\emph{locally compact}}
if for every $x \in X$ and every open set $U \subset X$
such that $x \in U,$
there exists a compact (but not necessarily closed)
subset $Y \subset X$
such that $x \in \sint (Y) \subset Y \subset U.$
\end{dfn}

Equivalantly,
the compact neighborhoods
of every point $x \in X$ form a neighborhood base at~$x.$

\begin{dfn}[Remark 2.5(vi) of~\cite{BP}]\label{D-TDZero311}
Let $X$ be a locally compact
but not necessarily Hausdorff topological space.
We say that $X$ has
{\emph{topological dimension zero}}
if for every $x \in X$ and every open set $U \subset X$
such that $x \in U,$
there exists a compact open (but not necessarily closed)
subset $Y \subset X$
such that $x \in Y \subset U.$
We further say that a C*-algebra~$A$
has
{\emph{topological dimension zero}}
if $\Prim (A)$ has topological dimension zero.
\end{dfn}

We do not write $\dim (X) = 0$ or use similar notation,
because other values of the topological dimension
are not defined for spaces of this generality.

\begin{lem}\label{R-604Her}
Let $A$ be a C*-algebra~$A$ with topological dimension zero.
Then every hereditary subalgebra $B$ of~$A$
also has topological dimension zero.
\end{lem}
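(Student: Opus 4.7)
The plan is to reduce this lemma to a purely topological statement by invoking the standard identification of $\Prim(B)$ with an open subspace of $\Prim(A).$ Recall that for a hereditary subalgebra $B \subset A,$ the map $P \mapsto P \cap B$ is a homeomorphism from
\[
U = \{ P \in \Prim(A) : B \not\subset P \}
\]
(which is open in $\Prim(A)$) onto $\Prim(B)$; this is a classical fact appearing, for example, in 3.2.1 of Dixmier. Thus the lemma reduces to showing that if $X$ is locally compact (not necessarily Hausdorff) and has topological dimension zero, then every open subset $U \subset X,$ equipped with the subspace topology, again has topological dimension zero.

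To prove this topological reduction, let $x \in U$ and let $V \subset U$ be open in $U$ with $x \in V.$ Since $U$ is open in $X,$ the set $V$ is also open in $X.$ Applying Definition~\ref{D-TDZero311} to the space $X,$ we obtain a compact open subset $Y \subset X$ with $x \in Y \subset V.$ Because $V \subset U,$ we have $Y \subset U;$ because $Y$ is open in $X$ and $Y \subset U,$ it is open in $U;$ and compactness of $Y$ is intrinsic. Hence $Y$ is a compact open neighborhood of $x$ in $U$ contained in~$V,$ which is what is required. (Local compactness of $U$ in the sense of Definition~\ref{D-NT2LocCpt311} follows from the same argument, since a compact open set is automatically its own interior.)

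I do not expect any serious obstacle: the only nontrivial input is the homeomorphism between $\Prim(B)$ and the open subset $U \subset \Prim(A),$ which is standard, and the remainder is a one-line verification that compact open sets witnessing topological dimension zero in $X$ restrict correctly to~$U.$
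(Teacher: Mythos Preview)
Your proof is correct and follows essentially the same route as the paper: identify $\Prim(B)$ with an open subset of $\Prim(A)$ (the paper cites Theorem~5.5.5 of Murphy rather than Dixmier), then observe that an open subspace of a space with topological dimension zero again has topological dimension zero. The paper dismisses the latter step with ``clearly,'' whereas you spell it out; otherwise the arguments coincide.
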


\begin{proof}
Theorem 5.5.5 of~\cite{Mr}
implies that $\Prim (B)$
is homeomorphic to an open subset of $\Prim(A),$
and clearly an open subset of a space
with topological dimension zero
again has topological dimension zero.
\end{proof}

\begin{dfn}\label{D-COExh311}
Let $X$ be a not necessarily Hausdorff topological space.
A {\emph{compact open exhaustion}}
of~$X$
is an increasing net
$(Y_{\ld})_{\ld \in \Ld}$
of compact open subsets $Y_{\ld} \subset X$
such that $X = \bigcup_{\ld \in \Ld} Y_{\ld}.$
\end{dfn}

\begin{lem}\label{L-Union311}
Let $X$ be a not necessarily Hausdorff topological space.
The following are equivalent:
\begin{enumerate}
\item\label{L-Union311-COE}
$X$ has a compact open exhaustion.
\item\label{L-Union311-Nbd}
For every $x \in X$ there exists
a compact open (but not necessarily closed)
subset $Y \subset X$
such that $x \in Y.$
\end{enumerate}
\end{lem}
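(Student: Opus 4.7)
The plan is to prove the two directions of the equivalence in turn, with the forward direction being completely routine and the reverse direction requiring a standard directed-set construction.

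For \eqref{L-Union311-COE}$\Rightarrow$\eqref{L-Union311-Nbd}, I would simply observe that if $(Y_{\ld})_{\ld \in \Ld}$ is a compact open exhaustion and $x \in X,$ then $x$ lies in $Y_{\ld}$ for some $\ld,$ and each $Y_{\ld}$ is compact and open by hypothesis. No work is needed here beyond unpacking the definitions.

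For \eqref{L-Union311-Nbd}$\Rightarrow$\eqref{L-Union311-COE}, I would build the exhaustion directly from the collection of all compact open subsets. Let $\mathcal{F}$ denote the family of all compact open subsets of $X,$ and let $\Ld$ be the collection of finite subsets of $\mathcal{F},$ directed by inclusion. For each $\ld = \{ Y_{1}, \ldots, Y_{n} \} \in \Ld,$ set $Z_{\ld} = Y_{1} \cup \cdots \cup Y_{n}.$ A finite union of open sets is open, and a finite union of compact sets is compact (this holds without any Hausdorff assumption, since a finite union of subsets with the Heine-Borel property again has the Heine-Borel property). Thus each $Z_{\ld}$ is compact and open. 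The net is increasing because $\ld \subset \ld'$ in $\Ld$ implies $Z_{\ld} \subset Z_{\ld'},$ and the union $\bigcup_{\ld \in \Ld} Z_{\ld}$ equals $X$ because, by \eqref{L-Union311-Nbd}, every point of $X$ lies in some single compact open set $Y \in \mathcal{F},$ hence in $Z_{\{ Y \}}.$

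There is really no obstacle in this lemma; the only point worth flagging is the non-Hausdorff setting, where one must confirm that finite unions of ``compact'' sets (in the sense of the Heine-Borel covering property, without closedness) are compact. This is immediate from the definition of compactness via open covers and does not require any separation axiom.
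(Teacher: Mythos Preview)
Your proof is correct and follows essentially the same approach as the paper: the paper also dismisses \eqref{L-Union311-COE}$\Rightarrow$\eqref{L-Union311-Nbd} as obvious, and for the converse takes finite unions of compact open sets as the exhaustion. The only cosmetic difference is that the paper indexes by finite subsets of~$X$ (choosing one compact open neighborhood $Z_x$ per point), whereas you index by finite subsets of the family~$\mathcal{F}$ of all compact open sets; both yield the same net of finite unions.
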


\begin{proof}
That (\ref{L-Union311-COE}) implies~(\ref{L-Union311-Nbd}) is obvious.
For the reverse,
for every $x \in X$ choose a compact open
subset $Z_x \subset X$
such that $x \in Z_x.$
Take $\Ld$ to be the collection of all finite subsets of~$X,$
and for $\ld \in \Ld$ set $Y_{\ld} = \bigcup_{x \in \ld} Z_x.$
\end{proof}

\begin{lem}\label{L-TDZCond311}
Let $X$ be a locally compact
but not necessarily Hausdorff topological space.
The following are equivalent:
\begin{enumerate}
\item\label{L-TDZCond311-TDZ}
$X$ has topological dimension zero.
\item\label{L-TDZCond311-COE}
Every open subset of $X$ has a compact open exhaustion.
\item\label{L-TDZCond311-Base}
The compact open sets form a base for the topology of~$X.$
\end{enumerate}
\end{lem}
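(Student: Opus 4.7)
The plan is to prove the chain $(1) \Rightarrow (3) \Rightarrow (2) \Rightarrow (1)$, using Lemma~\ref{L-Union311} to handle $(3) \Rightarrow (2)$, and observing that $(1) \Leftrightarrow (3)$ is essentially a translation between a pointwise and a base formulation.

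First I would handle $(1) \Rightarrow (3)$: given $x \in X$ and an open set $U$ containing $x$, Definition~\ref{D-TDZero311} hands us a compact open $Y \subset X$ with $x \in Y \subset U$. Ranging over all pairs $(x,U)$ then gives precisely a base of compact open sets for the topology of $X$, which is~(3). Conversely, $(3) \Rightarrow (1)$ is immediate from the definition of a base: given $x \in U$ with $U$ open, there is some base element $Y$ with $x \in Y \subset U$, and by hypothesis $Y$ is compact open.

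Next I would derive $(3) \Rightarrow (2)$ from Lemma~\ref{L-Union311}. Let $U \subset X$ be open. For any $x \in U$, condition~(3) provides a compact open subset $Y \subset X$ with $x \in Y \subset U$. Since $Y$ is open in $X$ and contained in $U$, it is open in the subspace topology on $U$; and since compactness of a subset is intrinsic (it depends only on the relative topology), $Y$ is compact in~$U$. Thus every point of $U$ has a compact open (in~$U$) neighborhood, so Lemma~\ref{L-Union311}, applied to the space $U$, yields a compact open exhaustion of~$U$. Finally, $(2) \Rightarrow (1)$ is straightforward: given $x \in X$ and an open set $U$ containing $x$, apply~(2) to $U$ to produce an exhaustion $(Y_\lambda)_{\lambda \in \Lambda}$ of $U$ by compact open subsets of~$U$; then $x \in Y_\lambda$ for some $\lambda$, and each $Y_\lambda$ is compact and open in $X$ (being compact as a subset of $U$ and open in $U$, which in turn is open in $X$), so $Y_\lambda$ witnesses condition~(1).

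There is no genuinely hard step here; the only thing that requires a little care is making sure the terms ``compact'' and ``open'' transfer correctly between $X$ and its open subspaces in the non-Hausdorff setting, which is why one uses the intrinsic (Heine--Borel) definition of compactness adopted just before Definition~\ref{D-NT2LocCpt311}. Local compactness of $X$ is not actually needed for the three implications themselves, only for the statement of~(1) to make sense in the form given.
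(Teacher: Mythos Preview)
Your proof is correct and follows essentially the same approach as the paper: the equivalence of (1) and (3) is the definition of a base, and the link between (1)/(3) and (2) goes through Lemma~\ref{L-Union311} applied to an arbitrary open subset~$U$. The paper is simply terser, phrasing this as two separate equivalences $(1)\Leftrightarrow(2)$ and $(1)\Leftrightarrow(3)$ rather than a cyclic chain.
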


\begin{proof}
The equivalence of (\ref{L-TDZCond311-TDZ}) and~(\ref{L-TDZCond311-COE})
follows from Lemma~\ref{L-Union311}.
The equivalence of
(\ref{L-TDZCond311-TDZ}) and~(\ref{L-TDZCond311-Base})
is the definition of a base for a topology.
\end{proof}
The proof of the following lemma is adapted
from the proof of Proposition~2.6 of~\cite{BP}.

\begin{lem}\label{L-Ext311}
Let $X$ be a locally compact
but not necessarily Hausdorff topological space.
Let $U \subset X$ be open.
Suppose that $X \setminus U$ has a compact open exhaustion
(in the relative topology)
and $U$ has a compact open exhaustion.
Then $X$ has a compact open exhaustion.
\end{lem}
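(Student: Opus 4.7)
My plan is to verify the criterion in Lemma~\ref{L-Union311}, i.e., to show that every point of $X$ lies in some compact open subset of~$X$. For $x \in U$ this is immediate: since $U$ is open in~$X$, any compact open subset of $U$ (supplied by the compact open exhaustion of~$U$) that contains $x$ is automatically compact open in~$X$. The substance of the lemma is therefore the case $x \in F := X \setminus U$.

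For such an $x$, I would first pick, using the compact open exhaustion of $F$ in its relative topology, a set $V \subset F$ which is compact and open in $F$ with $x \in V$, and then choose an open $W \subset X$ with $W \cap F = V$. Using local compactness of~$X$, for each $y \in V$ I select a compact neighborhood $N_y$ of $y$ in $X$ with $N_y \subset W$. Compactness of~$V$ yields finitely many $y_1, \ldots, y_n$ with
\[
V \subset O := \sint(N_{y_1}) \cup \cdots \cup \sint(N_{y_n}),
\]
and I set $N := N_{y_1} \cup \cdots \cup N_{y_n}$. Then $O$ is open, $N$ is compact, $V \subset O \subset N \subset W$, and since $N \cap F \subset W \cap F = V \subset O$, one obtains the inclusion $N \subset O \cup U$.

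Denote the given compact open exhaustion of~$U$ by $(Z_\ld)_{\ld \in \Ld}$. Since $N \subset O \cup \bigcup_{\ld \in \Ld} Z_\ld$ and $N$ is compact, together with the fact that the net $(Z_\ld)$ is increasing this produces a single index $\ld_0$ with $N \subset O \cup Z_{\ld_0}$. Put $Y := O \cup Z_{\ld_0}$. Then $Y$ is open as a union of two open sets, and contains~$x$ because $x \in V \subset O \subset Y$. The key observation is that the inclusions $O \subset N$ and $N \subset O \cup Z_{\ld_0}$ force the equalities
\[
Y \;=\; O \cup Z_{\ld_0} \;=\; N \cup Z_{\ld_0},
\]
and the second description exhibits $Y$ as a union of two compact sets. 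Thus $Y$ is a compact open subset of $X$ containing $x$, and Lemma~\ref{L-Union311} completes the proof.

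The main obstacle I expect is the final compactness check: a priori, $O \cup Z_{\ld_0}$ is only a union of an open set with a compact open set, and has no reason to be compact. The role of local compactness is precisely to trap the open set $O$ between $V$ and a compact set $N \subset W$, so that the compactness of~$N$ combined with the increasing exhaustion $(Z_\ld)$ delivers $N \subset O \cup Z_{\ld_0}$; this is exactly what collapses $Y$ onto the visibly compact set $N \cup Z_{\ld_0}$ and supplies the missing compactness.
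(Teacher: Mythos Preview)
Your proof is correct and follows essentially the same route as the paper's: both verify Lemma~\ref{L-Union311}, handle $x \in U$ trivially, and for $x \in X \setminus U$ pick a relatively compact open $V$ (the paper's~$Z$), thicken it via local compactness to a compact $N \subset W$ (the paper's~$R$) with open core $O = \sint(N)$, observe that $N \setminus O \subset U$, and absorb this remainder into a single compact open piece of~$U$ to obtain $Y = N \cup Z_{\ld_0} = O \cup Z_{\ld_0}$ (the paper's $C \cup R = C \cup \sint(R)$). The only cosmetic difference is that the paper first isolates the compact set $R \setminus \sint(R) \subset U$ and covers it directly, while you cover all of~$N$ by $\{O\} \cup \{Z_\ld\}$; the resulting equality and conclusion are identical.
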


\begin{proof}
We verify condition~(\ref{L-Union311-Nbd})
in Lemma~\ref{L-Union311}.
So let $x \in X.$
We need to find a compact open subset $Y \subset X$
such that $x \in Y.$

If $x \in U,$
then, since open subsets of $U$ are open in~$X,$
we simply apply the hypothesis on~$U.$
So assume that $x \in X \setminus U.$
By Lemma~\ref{L-Union311},
there is a compact subset $Z \subset X \setminus U$
such that $x \in Z$
and $Z$ is open in $X \setminus U$ in the relative topology.
Choose an open subset $W \subset X$
such that $Z = W \cap (X \setminus U).$
Since $X$ is locally compact,
for every $z \in Z$ there is a compact set $R (z) \subset X$
such that
\[
z \in \sint (R (z)) \subset R (z) \subset W.
\]
Since $Z$ is compact,
there are $n \in \N$ and $z_1, z_2, \ldots, z_n \in Z$
such that
\[
\sint (R (z_1)), \, \sint (R (z_2)), \, \ldots, \, \sint (R (z_n))
\]
cover~$Z.$
Set $R = \bigcup_{k = 1}^n R (z_k).$
Then $R$ is compact and $Z \subset \sint (R) \subset R \subset W.$
Since $W \cap (X \setminus U) = Z,$
it follows that $R \setminus \sint (R) \subset U.$
Also, $R \setminus \sint (R)$
is closed in~$R$ in the relative topology of~$R,$
and is hence compact.
Since $U$ has a compact open exhaustion,
there is a compact open subset $C \subset U$
such that $R \setminus \sint (R) \subset C.$
Set $Y = C \cup R.$
Then $Y$ is compact because $C$ and $R$ are.
Also, $Y$ is open because $Y = C \cup \sint (R).$
Since $x \in R \subset Y,$ the proof is complete.
\end{proof}

\begin{lem}\label{L-COESm321}
Let $X$ be a topological space with a compact open exhaustion.
Then every closed subset of $X$ has a compact open exhaustion
in the relative topology.
\end{lem}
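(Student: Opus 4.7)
The plan is very short: simply intersect the given exhaustion of $X$ with the closed subset~$F$. Let $(Y_\ld)_{\ld \in \Ld}$ be a compact open exhaustion of~$X$ and let $F \subset X$ be closed. I will show that $(F \cap Y_\ld)_{\ld \in \Ld}$ is a compact open exhaustion of~$F$ in the relative topology.

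First, each $F \cap Y_\ld$ is open in $F$ because $Y_\ld$ is open in~$X$; the family is increasing because $(Y_\ld)$ is; and $\bigcup_{\ld \in \Ld} (F \cap Y_\ld) = F \cap X = F.$ The only real content is compactness of $F \cap Y_\ld.$ Here I would observe that $F \cap Y_\ld$ is closed in~$Y_\ld$ (being the intersection of the closed set $F$ with~$Y_\ld$), and that a closed subset of a compact space is compact even in the non-Hausdorff (quasicompact) sense: given an open cover of $F \cap Y_\ld$ by sets open in~$Y_\ld,$ adjoin the open set $Y_\ld \setminus (F \cap Y_\ld)$ to obtain an open cover of~$Y_\ld,$ extract a finite subcover, and discard the extra set.

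There is no serious obstacle here; the only thing to be careful about is that compactness throughout the section means quasicompactness (as emphasized in the paragraph before Definition~\ref{D-NT2LocCpt311}), so the standard ``closed subset of compact is compact'' argument must be written out to confirm that no Hausdorff hypothesis is being used. With that observation in place, the lemma follows immediately.
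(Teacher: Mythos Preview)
Your proof is correct and essentially identical to the paper's own argument: intersect the given compact open exhaustion with the closed subset, and observe that the intersection of a compact set with a closed set is compact even without Hausdorffness. If anything, your write-up is slightly more detailed than the paper's, since you spell out the open-cover argument for that last fact.
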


\begin{proof}
Let $Y \subset X$ be closed.
Choose an increasing net
$(L_{\ld})_{\ld \in \Ld}$
of compact open subsets $L_{\ld} \subset X$
such that $X = \bigcup_{\ld \in \Ld} L_{\ld}.$
For $\ld \in \Ld,$
set $M_{\ld} = L_{\ld} \cap Y.$
Then $M_{\ld}$ is compact because $Y$ is closed
and, even when the space is not Hausdorff,
the intersection of a compact set and a closed set is compact.
Clearly $\bigcup_{\ld \in \Ld} M_{\ld} = Y.$
\end{proof}

\begin{dfn}[See page~53 of~\cite{PR}]\label{D:D1_0123}
An ideal $I$ in a C*-algebra $A$ is said to be {\emph{compact}}
if whenever $(I_{\ld})_{\ld \in \Ld}$
is an increasing net of ideals in~$A$
such that $I = {\overline{\bigcup_{\ld \in \Ld} I_{\ld} }},$
then there is $\ld \in \Ld$ such that $I = I_{\ld}.$
\end{dfn}

\begin{lem}\label{L-CptId311}
Let $A$ be a C*-algebra and let $I \subset A$ be an ideal.
Then $I$ is compact if and only if
$\Prim (I)$ is a compact open subset of $\Prim (A).$
\end{lem}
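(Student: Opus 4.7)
The plan is to reduce everything to the Dauns--Hofmann type correspondence between ideals of $A$ and open subsets of $\Prim(A).$ Recall that the map $I \mapsto U_I := \{ P \in \Prim(A) \colon I \not\subset P \}$ is an order-preserving bijection from the lattice of (closed two-sided) ideals of $A$ onto the lattice of open subsets of $\Prim(A),$ and that under this bijection $\Prim(I)$ is canonically homeomorphic to $U_I$ as a subspace of $\Prim(A).$ The key compatibility facts I need are that $U_{I \cap J} = U_I \cap U_J,$ and that for any upward-directed family $(I_\ld)_{\ld \in \Ld}$ of ideals one has $U_{\overline{\sum_\ld I_\ld}} = \bigcup_\ld U_{I_\ld}.$ Both are standard and follow from the fact that a primitive ideal $P$ fails to contain $\overline{\sum_\ld I_\ld}$ iff it fails to contain some $I_\ld.$

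Assuming these identifications, first I would prove the reverse direction. Suppose $U_I$ is compact (and open) in $\Prim(A),$ and let $(I_\ld)_{\ld \in \Ld}$ be an increasing net of ideals with $I = \overline{\bigcup_\ld I_\ld}.$ Then $(U_{I_\ld})$ is a directed open cover of $U_I,$ so compactness plus directedness produces a single $\ld$ with $U_{I_\ld} = U_I,$ and hence $I_\ld = I.$

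For the forward direction, suppose $I$ is a compact ideal and let $(V_\af)_{\af \in \Af}$ be an open cover of $U_I$ by open subsets of $\Prim(A).$ Write $V_\af = U_{J_\af}$ for ideals $J_\af \subset A,$ and set $K_\af = I \cap J_\af,$ so that $U_{K_\af} = U_I \cap V_\af.$ For a finite subset $F \subset \Af,$ let $L_F = \sum_{\af \in F} K_\af,$ an ideal contained in $I.$ The net $(L_F)_F$ is increasing in $F,$ and its closed union satisfies
\[
U_{\overline{\bigcup_F L_F}} \; = \; \bigcup_\af U_{K_\af} \; = \; U_I \cap \bigcup_\af V_\af \; = \; U_I,
\]
so $\overline{\bigcup_F L_F} = I.$ Compactness of $I$ then produces a finite $F$ with $L_F = I,$ and re-translating via $U$ yields $U_I \subset \bigcup_{\af \in F} V_\af,$ which is the required finite subcover.

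The argument is essentially bookkeeping; the only nontrivial input is the lattice-theoretic correspondence with $\Prim(A),$ in particular that closed sums of ideals go to unions of open sets. That is where I expect a reader to want a reference (e.g.\ to the relevant section of~\cite{Pd1} or~\cite{Mr}), but it is not a genuine obstacle, so I do not foresee any hard step.
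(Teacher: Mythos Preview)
Your proof is correct and follows essentially the same approach as the paper: both use the order-isomorphism between ideals of $A$ and open subsets of $\Prim(A)$ to translate the compact-ideal condition into the directed-open-cover characterization of compactness of $\Prim(I).$ The paper compresses this into a single sentence, whereas you spell out both directions and the passage from arbitrary covers to directed ones via finite sums, but the underlying argument is the same.
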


\begin{proof}
Compactness of $I$
is equivalent to the statement that
whenever $(U_{\ld})_{\ld \in \Ld}$
is an increasing net of open subsets of $\Prim (A)$
such that $\Prim (I) = \bigcup_{\ld \in \Ld} U_{\ld},$
then there is $\ld \in \Ld$ such that $\Prim (I) = U_{\ld}.$
\end{proof}

\begin{lem}\label{L-Inter322}
Let $\af \colon G \to \Aut (A)$
be an action of a finite group $G$
on a C*-algebra~$A.$
Let $J$ be an $\alpha$-invariant ideal in~$A.$
Then $J \subset {\overline{A (J \cap A^{\af})}}.$
\end{lem}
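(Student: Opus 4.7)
The plan is to use the standard conditional expectation $E \colon A \to A^{\af}$ defined by $E(x) = \tfrac{1}{|G|} \sum_{g \in G} \af_g(x)$, and then to use functional calculus applied to $E(a^*a)$ in order to produce an approximate factorization of each $a \in J$ through elements of $J \cap A^{\af}$.

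First I would fix $a \in J$ and set $b = a^*a$ and $c = E(b)$. Because $J$ is $\af$-invariant, every $\af_g(b)$ lies in $J$, so $c \in J \cap A^{\af}$. The crucial inequality, immediate from the formula for $E$ and the positivity of the other summands, is
\[
b \,\leq\, |G| \, c.
\]
Next I would construct, using continuous functional calculus on $c$, a sequence of positive contractions $h_n \in C^*(c) \subset J \cap A^{\af}$ of the form $h_n = f_n(c)$, where $f_n \colon [0, \|c\|] \to [0,1]$ is continuous, vanishes at $0$, and satisfies $t(1-f_n(t))^2 \to 0$ uniformly in $t$ (for instance $f_n(t) = \min(nt,1)$, so that $t(1-f_n(t))^2 \leq 1/n$).

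The key computation would then be
\[
\|a - a h_n\|^2 = \bigl\| (1-h_n) b (1-h_n) \bigr\|
 \,\leq\, |G| \, \bigl\| c (1-f_n(c))^2 \bigr\|,
\]
using the C*-identity at the first step and the inequality $b \leq |G|c$ together with positivity of $1 - h_n$ at the second. The right-hand side tends to $0$ by the choice of $f_n$, so $a h_n \to a$ in norm. Since each $a h_n \in A \cdot (J \cap A^{\af})$, we conclude $a \in \overline{A (J \cap A^{\af})}$, as desired.

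I do not expect any serious obstacle: the only mild subtlety is that $(1 - f_n(c))$ need not lie in $A$ if $A$ is nonunital, but this is harmless since the estimate is carried out inside $A^{\sim}$ or, equivalently, uses only the functional-calculus identity $(1 - f_n(c)) c (1 - f_n(c)) = c(1-f_n(c))^2$, whose norm is controlled by the scalar bound $\sup_t t(1-f_n(t))^2$. The finiteness of $G$ is used only through the existence of $E$ and the inequality $b \leq |G|c$, so this argument in fact goes through for any compact group with Haar measure in place of the averaging sum.
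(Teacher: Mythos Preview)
Your proof is correct, but it takes a different route from the paper's. The paper simply averages an approximate identity $(a_{\ld})$ for $J$ over $G$ to obtain $b_{\ld} = |G|^{-1}\sum_{g}\af_g(a_{\ld}) \in J \cap A^{\af}$; since each $(\af_g(a_{\ld}))_{\ld}$ is again an approximate identity for the $\af$-invariant ideal $J$, so is $(b_{\ld})$, and then $x = \lim x b_{\ld}$ for every $x \in J$.

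Compared with this, your argument is local rather than global: instead of producing one invariant approximate identity for all of $J$, you build, for each $a \in J$, a tailored sequence $h_n \in C^*(E(a^*a)) \subset J \cap A^{\af}$ via functional calculus and the domination $a^*a \leq |G|\,E(a^*a)$. Both arguments extend to compact groups, as you note. The paper's approach is shorter and yields the slightly stronger byproduct that $J$ admits an approximate identity lying in $J \cap A^{\af}$; your approach has the virtue of making the key inequality $a^*a \leq |G|\,E(a^*a)$ explicit, which is a useful fact in its own right and avoids invoking an approximate identity for $J$ at all.
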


\begin{proof}
Set $n = \card (G).$
Choose an approximate identity $(a_{\ld})_{\ld \in \Ld}$ for~$J.$
For $\ld \in \Ld,$ define
\[
b_{\ld} = \frac{1}{n} \sum_{g \in G} \af_g (a_{\ld}).
\]
Since $J$ is $G$-invariant,
$(\af_g (a_{\ld}))_{\ld \in \Ld}$
is also an approximate identity for~$J$
for every $g \in G.$
So $(b_{\ld})_{\ld \in \Ld}$
is an approximate identity for~$J.$
Moreover, $b_{\ld} \in A^{\af}$ for all $\ld \in \Ld.$
If $x \in J,$
we therefore have
$x = \lim_{\ld \in \Ld} x b_{\ld} \in {\overline{A (J \cap A^{\af})}}.$
This completes the proof.
\end{proof}

\begin{lem}\label{L:L1_0123}
Let $\af \colon G \to \Aut (A)$
be an action of a finite group $G$
on a C*-algebra~$A.$
Let $J$ be an $\alpha$-invariant ideal in~$A.$
If $J \cap A^{\af}$ is a compact ideal in~$A^{\af},$
then $J$ is a compact ideal in~$A.$
\end{lem}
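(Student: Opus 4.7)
Proof plan.

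The plan is to reduce compactness of $J$ in $A$ to compactness of $J \cap A^{\af}$ in $A^{\af}$ by $G$-averaging an arbitrary increasing net of ideals and intersecting with the fixed point algebra.

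Let $(I_\ld)_{\ld \in \Ld}$ be an increasing net of ideals in $A$ with $J = \overline{\bigcup_\ld I_\ld}$. First I would form the $G$-saturated net $I_\ld' := \sum_{g \in G} \af_g(I_\ld)$; each $I_\ld'$ is a finite sum of closed ideals, hence a closed $\af$-invariant ideal of~$A$, and the net $(I_\ld')_\ld$ is increasing, contained in~$J$ by $\af$-invariance of~$J$, and satisfies $\overline{\bigcup_\ld I_\ld'} = J$.

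Next I would intersect with $A^{\af}$. The canonical conditional expectation $E \colon A \to A^{\af}$ given by $E(a) = |G|^{-1} \sum_{g \in G} \af_g(a)$ is contractive and, by $G$-invariance of $I_\ld'$, maps each $I_\ld'$ into $I_\ld' \cap A^{\af}$. Approximating $b \in J \cap A^{\af}$ by elements of $\bigcup_\ld I_\ld'$ and applying~$E$ yields $J \cap A^{\af} = \overline{\bigcup_\ld (I_\ld' \cap A^{\af})}$. The compactness of $J \cap A^{\af}$ in $A^{\af}$ then produces $\ld_0 \in \Ld$ with $I_{\ld_0}' \cap A^{\af} = J \cap A^{\af}$, and Lemma~\ref{L-Inter322} gives
\[
J = \overline{A(J \cap A^{\af})} = \overline{A(I_{\ld_0}' \cap A^{\af})} \subset I_{\ld_0}',
\]
so $J = I_{\ld_0}' = \sum_{g \in G} \af_g(I_{\ld_0})$.

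The main obstacle is the final step: passing from the saturated identity $J = I_{\ld_0}'$ to an equality $I_\nu = J$ for some $\nu \in \Ld$ in the original net. Since the sum over $g \in G$ is finite and the net is directed, it suffices to absorb each of the ideals $\af_g(I_{\ld_0})$ into some member $I_{\nu(g)}$ of the original net and take $\nu$ to dominate the $\nu(g)$, whence $I_\nu \supset \sum_g \af_g(I_{\ld_0}) = J$. The delicacy lies in establishing this absorption, since a priori an ideal $\af_g(I_{\ld_0}) \subset J = \overline{\bigcup_\nu I_\nu}$ need not be contained in any single $I_\nu$; I expect this to be handled either by exploiting the finiteness of~$G$ to pass to a suitable cofinal subnet, or by invoking the topological characterization of compact ideals from Lemma~\ref{L-CptId311}.
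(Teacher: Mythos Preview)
Your argument is correct through the identity $J = I_{\ld_0}' = \sum_{g \in G} \af_g(I_{\ld_0})$, but the final step is a genuine gap, and neither of your proposed fixes closes it. The ideal $\af_g(I_{\ld_0})$ is merely some ideal contained in $J = \overline{\bigcup_\nu I_\nu}$; absent compactness of $\af_g(I_{\ld_0})$ itself (which you have no reason to expect), nothing forces it into a single $I_\nu$. Passing to a cofinal subnet does not help, since the net $(I_\ld)$ carries no $G$-structure, and invoking Lemma~\ref{L-CptId311} for $J$ would be circular. The saturation $I_\ld \mapsto I_\ld'$ is exactly what creates the problem: it buys you the easy averaging step via~$E$, but at the cost of losing the link back to the original net.

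The paper avoids this by never saturating. It sets $I_\ld = J_\ld \cap A^{\af}$ and proves directly that $\overline{\bigcup_\ld I_\ld} = J \cap A^{\af}$. Plain averaging via $E$ fails here, since $E(J_\ld) \not\subset J_\ld$ in general; the missing idea is to produce approximants in the \emph{intersection} $\bigcap_{g} \af_g(J_\ld)$, which is $\af$-invariant and contained in $J_\ld$. Concretely, for $a \in J \cap A^{\af}$ one chooses, for each $g \in G$, an element $x_g \in \af_g(J_\ld)$ with $0 \leq x_g \leq 1$ and $\|a x_g - a\|$ small; the product $x_{k(1)} x_{k(2)} \cdots x_{k(n)}$ over an enumeration of $G$ then lies in $\bigcap_g \af_g(J_\ld)$, and its $G$-average $x$ lies in $J_\ld \cap A^{\af} = I_\ld$ with $\|a x - a\|$ still small. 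Compactness now yields $J \cap A^{\af} = I_{\ld_0} \subset J_{\ld_0}$, and Lemma~\ref{L-Inter322} gives $J \subset \overline{A I_{\ld_0}} \subset J_{\ld_0}$ directly, with no saturation to undo.
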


\begin{proof}
Set $I = J \cap A^{\af}.$
Let $(J_{\ld})_{\ld \in \Ld}$ be an increasing net of ideals
in $A$ such that
$J = {\overline{ \bigcup_{\ld \in \Ld} J_{\ld} }}.$
We must find $\ld$ such that $J_{\ld} = J.$
For $\ld \in \Ld,$ define $I_{\ld} = J_{\ld} \cap A^{\af},$
which is an ideal in~$A^{\af}.$

We claim that
${\overline{ \bigcup_{\ld \in \Ld} I_{\ld} }} = I.$
To prove this, let $a \in I$ and let $\ep > 0.$
For $g \in G,$
we have
\[
a = \af_g (a)
  \in \af_g (J)
  = {\overline{ \bigcup_{\ld \in \Ld} \af_g (J_{\ld}) }},
\]
so there are
$\ld_g \in \Ld$
and
$x_g \in \af_g ( J_{\ld_{g}} )$
such that
$0 \leq x_{g} \leq 1$
and
% $\| a x_{g} - a \| < \frac{\ep}{n}.$
$\| a x_{g} - a \| < n^{-1} \ep.$
Choose $\ld \in \Ld$ such that for all $g \in G$
we have $\ld \geq \ld_{g}.$

Choose a bijection
$k \colon \{ 1, 2, \ldots, n \} \to G.$
Define
\[
x = \frac{1}{n}
   \sum_{g \in G} \af_{g} (x_{k (1)} x_{k (2)} \cdots x_{k (n)} ).
\]
We have
\[
x_{k (1)} x_{k (2)} \cdots x_{k (n)}
  \in \bigcap_{g \in G} \af_{g} ( J_{\ld} ).
\]
Since $\bigcap_{g \in G} \af_{g} ( J_{\ld} )$ is $\af$-invariant,
it follows that
$x \in \bigcap_{g \in G} \af_{g} ( J_{\ld} ).$
In particular, $x \in J_{\ld}.$
Clearly $x \in A^{\af},$
so $x \in I_{\ld}.$
Also,
% \[
% \| a x_{k (1)} - a \| < \frac{\ep}{n},
% \]
$\| a x_{k (1)} - a \| < n^{-1} \ep,$
so
\[
\| a x_{k (1)} x_{k (2)} - a \|
  \leq \| a x_{k (1)} - a \| \cdot \| x_{k (2)} \|
          + \| a x_{k (2)} - a \|
   < \frac{\ep}{n} + \frac{\ep}{n}
   = \frac{2 \ep}{n}.
\]
An induction argument shows that
\[
\big\| a x_{k (1)} x_{k (2)} \cdots x_{k (n)} - a \big\|
   < \ep.
\]
Since $\af_{g}^{-1} (a) = a$
for all $g \in G,$
we also have
\[
\big\| a \cdot
  \af_{g} (x_{k (1)} x_{k (2)} \cdots x_{k (n)} ) - a \big\| < \ep
\]
for all $g \in G.$
Therefore $\| a x - a \| < \ep.$
Since $a x \in I_{\ld},$
we have $\dist (a, I_{\ld}) < \ep.$
Since $\ep > 0$ is arbitrary,
it follows that $a \in {\overline{ \bigcup_{\ld \in \Ld} I_{\ld} }},$
proving the claim.

Since $I$ is compact, there is $\ld \in \Ld$
such that $I = I_{\ld}.$
Using Lemma~\ref{L-Inter322},
we then get
$J \subset {\overline{A I}}
  = {\overline{A I_{\ld}}}
  \subset J_{\ld}.$
Thus $J_{\ld} = J,$
showing that $J$ is compact.
\end{proof}

% ???
% (The following remark should be deleted in the final version.)
%
% There is no reason for the element $x$ in the proof to be positive,
% or even selfadjoint.
% If we start instead by letting
% $S$ be the set of all bijections from
% $\{ 1, 2, \ldots, n \}$ to~$G,$
% and then averaging
% \[
% \frac{1}{n!}
%    \sum_{\sm \in S} x_{\sm (1)} x_{\sm (2)} \cdots x_{\sm (n)}
% \]
% over~$G,$
% we will at least get something selfadjoint.
% (The adjoint of any term is equal to one of the other terms.)
% But the result still won't obviously be positive.

\begin{lem}\label{L-COE322}
Let $\af \colon G \to \Aut (A)$
be an action of a finite group $G$
on a C*-algebra~$A.$
Suppose that whenever $I \subset A^{\af}$ is a compact ideal,
then ${\overline{A I A}} \cap A^{\af}$ is a compact ideal in~$A^{\af}.$
Let $J$ be an $\alpha$-invariant ideal in~$A.$
If $\Prim (J \cap A^{\af}) \subset \Prim (A^{\af})$
has a compact open exhaustion,
then $\Prim (J) \subset \Prim (A)$
has a compact open exhaustion.
\end{lem}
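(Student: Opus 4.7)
The plan is to use Lemma~\ref{L-CptId311} to translate everything into a statement about compact ideals. Since $\Prim(J \cap A^{\af})$ is open in $\Prim(A^{\af}),$ a compact open exhaustion of it in the relative topology gives, via Lemma~\ref{L-CptId311}, an increasing net $(I_\ld)_{\ld \in \Ld}$ of compact ideals of~$A^{\af},$ each contained in $J \cap A^{\af},$ with $\overline{\bigcup_\ld I_\ld} = J \cap A^{\af}.$

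For each $\ld \in \Ld$ I would set $J_\ld = \overline{A I_\ld A}.$ Because $I_\ld \subset A^{\af}$ is pointwise fixed by~$\af,$ each $J_\ld$ is an $\af$-invariant ideal of~$A$ contained in~$J,$ and the net $(J_\ld)_{\ld \in \Ld}$ is increasing. The standing hypothesis of the lemma applied to $I = I_\ld$ says that $J_\ld \cap A^{\af} = \overline{A I_\ld A} \cap A^{\af}$ is a compact ideal of~$A^{\af}.$ Applying Lemma~\ref{L:L1_0123} to $J_\ld$ then yields that $J_\ld$ itself is a compact ideal of~$A,$ so by Lemma~\ref{L-CptId311} each $\Prim(J_\ld)$ is a compact open subset of $\Prim(A)$ contained in~$\Prim(J).$

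To finish, it suffices to verify $J = \overline{\bigcup_\ld J_\ld},$ since this yields $\Prim(J) = \bigcup_\ld \Prim(J_\ld),$ which is the desired compact open exhaustion. The inclusion $\overline{\bigcup_\ld J_\ld} \subset J$ is immediate. For the reverse inclusion I would invoke Lemma~\ref{L-Inter322} to obtain
\[
J \subset \overline{A(J \cap A^{\af})}
  = \overline{A \cdot \overline{\textstyle\bigcup_\ld I_\ld}}
  \subset \overline{\textstyle\bigcup_\ld A I_\ld}
  \subset \overline{\textstyle\bigcup_\ld J_\ld},
\]
where the middle inclusion is a routine approximation (given $a \in A$ and $c \in \overline{\bigcup_\ld I_\ld},$ approximate $c$ by elements of some $I_\ld$ so that $ac$ is approximated by elements of $\bigcup_\ld A I_\ld$), and the last inclusion uses $A I_\ld \subset J_\ld.$ No serious obstacle remains; the substantive work is already encoded in Lemmas~\ref{L:L1_0123} and~\ref{L-Inter322}, and what remains is a direct translation between compact ideals and compact open subsets of the primitive ideal space.
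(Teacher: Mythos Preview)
Your proof is correct and follows essentially the same route as the paper: translate via Lemma~\ref{L-CptId311} to compact ideals, set $J_\ld = \overline{A I_\ld A},$ use the standing hypothesis together with Lemma~\ref{L:L1_0123} to see each $J_\ld$ is compact, and close with Lemma~\ref{L-Inter322} to get $J \subset \overline{\bigcup_\ld J_\ld}.$ The only cosmetic difference is that the paper writes the final chain with an extra factor of~$A$ on the right, passing through $\overline{A(J\cap A^{\af})A},$ but your version via $\overline{\bigcup_\ld A I_\ld}$ works just as well.
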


\begin{proof}
By Lemma~\ref{L-CptId311},
it suffices to find an increasing net
$(J_{\ld})_{\ld \in \Ld}$
of compact ideals in~$A$
such that $J = {\overline{\bigcup_{\ld \in \Ld} J_{\ld} }}.$
Lemma~\ref{L-CptId311} implies that
there is an increasing net
$(I_{\ld})_{\ld \in \Ld}$
of compact ideals in~$A^{\af}$
such that
$J \cap A^{\af} = {\overline{\bigcup_{\ld \in \Ld} I_{\ld} }}.$
For $\ld \in \Ld,$ set $J_{\ld} = {\overline{A I_{\ld} A}}.$
By hypothesis, ${\overline{A I_{\ld} A}} \cap A^{\af}$ is compact.
So Lemma~\ref{L:L1_0123} implies that $J_{\ld}$ is compact.
Clearly $J_{\ld} \subset J.$
Since $J$ is $\af$-invariant,
Lemma~\ref{L-Inter322} implies the first step of the
following calculation:
\[
J \subset {\overline{A (J \cap A^{\af})}}
  \subset {\overline{A (J \cap A^{\af}) A}}
  \subset {\overline{A \left( \bigcup_{\ld \in \Ld} I_{\ld} \right) A}}
  = {\overline{\bigcup_{\ld \in \Ld} A I_{\ld} A}}
  = {\overline{\bigcup_{\ld \in \Ld} J_{\ld} }}.
\]
This completes the proof.
\end{proof}

\begin{thm}\label{T-AG-321}
Let $\af \colon G \to \Aut (A)$
be an action of a finite group $G$
on a C*-algebra~$A.$
Suppose that $A^{\af}$ has topological dimension zero.
Suppose also that whenever $I \subset A^{\af}$ is a compact ideal,
then ${\overline{A I A}} \cap A^{\af}$ is a compact ideal
in~$A^{\af}.$
Then $A$ has topological dimension zero.
\end{thm}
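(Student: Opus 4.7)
The strategy is to apply Lemma \ref{L-TDZCond311}, which reduces the conclusion that $A$ has topological dimension zero to the statement that every open subset of $\Prim (A)$ admits a compact open exhaustion.

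I would first handle $\af$-invariant ideals. Since $A^{\af}$ has topological dimension zero, Lemma \ref{L-TDZCond311} applied to $\Prim (A^{\af})$ shows that every open subset of $\Prim (A^{\af})$ admits a compact open exhaustion; in particular this applies to $\Prim ( J \cap A^{\af} )$ for every $\af$-invariant ideal $J$ of $A$. Combined with the compactness-preservation hypothesis on the map $I \mapsto \overline{A I A} \cap A^{\af}$, Lemma \ref{L-COE322} then yields a compact open exhaustion of $\Prim (J) \subset \Prim (A)$. Thus every $\af$-invariant open subset of $\Prim (A)$ has a compact open exhaustion.

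To upgrade this from $\af$-invariant open subsets to arbitrary ones, let $U \subset \Prim (A)$ be open and let $P \in U$; the goal is to produce a compact open $Y \subset \Prim (A)$ with $P \in Y \subset U$. Let $J_U \subset A$ be the ideal corresponding to $U$, and set $J_U^{\circ} = \bigcap_{g \in G} \af_g (J_U)$, the largest $\af$-invariant ideal contained in $J_U$. The set $V = \Prim (J_U^{\circ}) = \bigcap_{g \in G} g \cdot U$ is then the largest $\af$-invariant open subset of $U$. When the $G$-orbit of $P$ lies entirely in $U$, equivalently $P \in V$, the first step applied to $V$ directly produces the required compact open neighborhood of $P$ inside $V \subset U$.

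The main obstacle is the case when $G \cdot P \not\subset U$, so that $P \in U \setminus V$. The plan is to invoke Lemma \ref{L-Ext311} with the decomposition $U = V \cup (U \setminus V)$: the open piece $V$ already has a compact open exhaustion, so one needs to produce one for the relatively closed piece $U \setminus V$, which is homeomorphic to $\Prim (J_U / J_U^{\circ})$. The hard part is that $J_U$ need not be $\af$-invariant, so the quotient $J_U / J_U^{\circ}$ does not inherit a natural $G$-action and Lemma \ref{L-COE322} is not directly applicable. I would attempt to overcome this by iterating the decomposition, stratifying $\Prim (A)$ according to the stabilizer subgroup of each primitive ideal and successively peeling off an $\af$-invariant piece from the ``top'' stratum using the first step; since the stabilizers form a finite collection of subgroups of~$G$, the iteration is forced to terminate.
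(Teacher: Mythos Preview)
Your overall framework matches the paper's: reduce via Lemma~\ref{L-TDZCond311} to showing every open subset of $\Prim(A)$ has a compact open exhaustion, handle $\af$-invariant open sets through Lemma~\ref{L-COE322}, and bridge to arbitrary open sets with Lemma~\ref{L-Ext311}. The gap is in your final paragraph, where the iterative scheme is both vague and based on the wrong invariant.

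Stratifying by stabilizer subgroups does not organize the problem correctly. The stabilizer of $P$ records which $g$ fix~$P$, whereas what governs the argument is the set of $h \in G$ for which $h \cdot P$ lies in~$U$; these are unrelated in general (a point with trivial stabilizer can have any subset of its orbit inside~$U$). More seriously, you have not explained how, at a given stage, to ``peel off an $\af$-invariant piece'' to which Lemma~\ref{L-COE322} applies: once you pass to $J_U / J_U^{\circ}$ there is no $G$-action left, and you give no mechanism for producing compact open exhaustions of the relevant non-invariant subquotients. As written, the induction does not close.

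The paper's device is a downward induction over all subsets $S \subset G$ containing~$1$ (not over subgroups), using the $\af$-invariant ideals
\[
I_S = \sum_{g \in G} \af_g \left( \bigcap_{h \in S} \af_h (J_U) \right)
\andeqn
I_S^{-} = \sum_{g \in G \setminus S} I_{S \cup \{g\}}
\]
from Lemma~5.3.3 of~\cite{Ph1}. One shows $\Prim(I_S \cap J_U)$ has a compact open exhaustion, starting from $S = G$ (your invariant core~$V$) and ending at $S = \{1\}$. The point that makes the induction step work, and which your sketch does not locate, is Lemma~5.3.3(5) of~\cite{Ph1}: the image of $I_S \cap J_U$ in the $\af$-invariant subquotient $I_S / I_S^{-}$ is a \emph{direct summand}. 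Hence its primitive ideal space inherits a compact open exhaustion from that of $I_S / I_S^{-}$ (via Lemmas~\ref{L-COE322} and~\ref{L-COESm321}), and Lemma~\ref{L-Ext311} then combines this with the inductive hypothesis on $I_S^{-} \cap J_U$ to handle $I_S \cap J_U$.
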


\begin{proof}
We verify that $\Prim( A )$
satisfies condition~(\ref{L-TDZCond311-COE})
of Lemma~\ref{L-TDZCond311}.
So let $V \subset \Prim( A )$ be open.
Let $I \subset A$ be the corresponding ideal.
We will use Lemma 5.3.3 of~\cite{Ph1}.
Thus, for $S \subset G$ with $1 \in S,$
we define $\af$-invariant ideals $I_S, I_S^{-} \subset A$ by
\[
I_S = \sum_{g \in G} \af_g \left( \bigcap_{h \in S} \af_h (I) \right)
\andeqn
I_S^{-} = \sum_{g \in G \setminus S} I_{S \cup \{ g \} }.
\]
Then
$I_G \subset I \subset I_{ \{ 1 \} }$
(Lemma 5.3.3(3) of~\cite{Ph1}).

We prove by downwards induction on the set~$S$
that $\Prim (I_S \cap I)$ has a compact open exhaustion.
We start with $S = G.$
Since $I_G \cap I = I_G$ is $\af$-invariant,
this follows from Lemma~\ref{L-COE322}.

Suppose now $S$ is given, with $1 \in S,$
and we know that $\Prim (I_T \cap I)$ has a compact open exhaustion
for all $T \subset G$ such that $S \subset T$ and $S \neq T.$
We have
\[
I_S^{-} \cap I
 =  \sum_{g \in G \setminus S} (I_{S \cup \{ g \} } \cap I),
\]
so
\[
\Prim (I_S^{-} \cap I)
  = \bigcup_{g \in G \setminus S} \Prim (I_{S \cup \{ g \} } \cap I).
\]
It is easily checked that the union of open subsets
with compact open exhaustions
again has a compact open exhaustion.
So $\Prim (I_S^{-} \cap I)$ has a compact open exhaustion.

Since $I_S$ is $G$-invariant,
Lemma~\ref{L-COE322}
implies that $\Prim (I_S)$ has a compact open exhaustion,
and now Lemma~\ref{L-COESm321}
implies that $\Prim (I_S / I_S^{-})$ has a compact open exhaustion.
Lemma 5.3.3(5) of~\cite{Ph1} implies that
$[(I_S \cap I) + I_S^{-}] / I_S^{-}$
is a direct summand in $I_S / I_S^{-}.$
Therefore $\Prim \big( [(I_S \cap I) + I_S^{-}] / I_S^{-} \big)$
has a compact open exhaustion by Lemma~\ref{L-COESm321}.
We have a short exact sequence
\[
0 \longrightarrow I_S^{-} \cap I
  \longrightarrow I_S \cap I
  \longrightarrow [(I_S \cap I) + I_S^{-}] / I_S^{-}
  \longrightarrow 0,
\]
so Lemma~\ref{L-Ext311}
implies that $\Prim (I_S \cap I)$ has a compact open exhaustion.
This completes the induction step.

Since $I_{ \{ 1 \} } \cap I = I,$
we conclude that $V = \Prim (I)$ has a compact open exhaustion.
\end{proof}

The hypothesis in Theorem~\ref{T-AG-321}
involving ${\overline{A I A}} \cap A^{\af}$
is annoying.
(It enters via Lemma~\ref{L-COE322}.)

\begin{qst}\label{Q-CptHyp322}
Let $\af \colon G \to \Aut (A)$
be an action of a finite group~$G$
on a C*-algebra~$A.$
Suppose that $I \subset A^{\af}$ is a compact ideal.
Does it follow that ${\overline{A I A}} \cap A^{\af}$
is also a compact ideal
in~$A^{\af}$?
\end{qst}

We note that ${\overline{A I A}} \cap A^{\af}$ need not equal~$I.$
Indeed,
there are many cases in which $A$ is simple but $A^{\af}$ is not.
(For example, take $A = L (l^2 (G))$ with the action
given by conjugation by the regular representation of~$G.$)
Then for any nontrivial ideal $I \subset A^{\af},$
one gets ${\overline{A I A}} \cap A^{\af} = A^{\af} \neq I.$

If $A$ is a crossed product of $A^{\af}$ by an action
of an abelian group,
then Question~\ref{Q-CptHyp322} has a positive answer.
Accordingly, we get a result for abelian groups.
To keep the notation simple,
we start with a separate lemma.

\begin{lem}\label{L-ClACap-322}
Let $\af \colon G \to \Aut (A)$
be an action of a finite group~$G$
on a C*-algebra~$A.$
Let $I \subset A$ be an ideal.
Then
\[
{\overline{C^* (G, A, \alpha) I C^* (G, A, \alpha) }} \cap A
  = \sum_{g \in G} \af_g (I).
\]
\end{lem}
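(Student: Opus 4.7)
The plan is to prove the two inclusions separately, with the reverse inclusion being routine and the forward inclusion reduced to a straightforward covariance computation via the canonical conditional expectation.

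For the inclusion $\supseteq$, observe that for each $g \in G$ and $a \in I$ we have $\af_g(a) = u_g a u_g^* \in C^*(G, A, \alpha) \cdot I \cdot C^*(G, A, \alpha)$, and clearly $\af_g(a) \in A$. Hence $\af_g(I) \subseteq C^*(G, A, \alpha) I C^*(G, A, \alpha) \cap A$ for every $g$, and so $\sum_{g \in G} \af_g(I)$ is contained in the right-hand side. Moreover a finite sum of closed ideals in a C*-algebra is closed, so $\sum_{g \in G} \af_g(I)$ is already a closed ideal of~$A$; it is therefore contained in $\overline{C^*(G, A, \alpha) I C^*(G, A, \alpha)} \cap A$.

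For the forward inclusion $\subseteq$, I would use the standard conditional expectation $E \colon C^*(G, A, \alpha) \to A,$ which is contractive and restricts to the identity on~$A.$ Given $b \in \overline{C^*(G, A, \alpha) I C^*(G, A, \alpha)} \cap A,$ we have $b = E(b).$ Choose a sequence $b_n \in C^*(G, A, \alpha) I C^*(G, A, \alpha)$ with $b_n \to b.$ Then $E(b_n) \to E(b) = b,$ so it suffices to show that $E(b_n) \in \sum_{g \in G} \af_g(I)$ for each~$n,$ since $\sum_{g \in G} \af_g(I)$ is closed by the previous paragraph.

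The required computation is just bookkeeping with the covariance relation $u_g c = \af_g(c) u_g$ for $c \in A.$ A typical element of $C^*(G, A, \alpha) I C^*(G, A, \alpha)$ is a finite sum of products
\[
\Bigl( \sum_{g \in G} x_g u_g \Bigr) a \Bigl( \sum_{h \in G} y_h u_h \Bigr)
  = \sum_{g, h \in G} x_g \af_g(a) \af_g(y_h) u_{gh},
\]
with $a \in I$ and $x_g, y_h \in A.$ Since $\af_g(I)$ is an ideal of~$A$ and $x_g, \af_g(y_h) \in A,$ each coefficient $x_g \af_g(a) \af_g(y_h)$ lies in $\af_g(I).$ Applying $E$ selects exactly the terms with $gh = 1,$ producing an element of $\sum_{g \in G} \af_g(I).$ This completes the proof. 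There is no real obstacle here; the only subtlety worth flagging is the closedness of $\sum_{g \in G} \af_g(I),$ which is a standard fact about finite sums of closed ideals in a C*-algebra and is what lets the argument bypass a further closure step at the end.
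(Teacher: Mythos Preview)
Your argument is correct. One tiny imprecision: when $A$ is nonunital the unitaries $u_g$ live only in $M(C^*(G,A,\af))$, so the line ``$u_g a u_g^* \in C^*(G,A,\af)\cdot I\cdot C^*(G,A,\af)$'' is not literally true; but sandwiching with an approximate identity for $A$ (or arguing as the paper does via multipliers) immediately gives $\af_g(a)$ in the closure, which is all you use.

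For the forward inclusion your route differs from the paper's. The paper argues structurally: setting $L=\sum_{g\in G}\af_g(I)$, it observes that $L$ is $\af$-invariant, so $C^*(G,L,\af)$ is an ideal of $C^*(G,A,\af)$ containing $I$; since $C^*(G,L,\af)\cap A=L$, the ideal generated by $I$ in the crossed product meets $A$ inside $L$. You instead compute directly with the conditional expectation, expanding a generic product $(\sum_g x_g u_g)\,a\,(\sum_h y_h u_h)$ and reading off that each $A$-coefficient lies in some $\af_g(I)$. Both approaches ultimately rest on the same fact (the expectation picks out the $A$-coefficient), but the paper's version packages it as the standard identity $C^*(G,L,\af)\cap A=L$ and avoids the explicit coefficient bookkeeping, while your version is more self-contained and makes the role of finiteness of $G$ (every crossed-product element is a finite sum $\sum_g a_g u_g$) completely transparent.
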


\begin{proof}
For $g \in G,$
let $u_g \in M (C^* (G, A, \alpha))$
be the standard unitary corresponding to~$g,$
as in the introduction.
Then
\[
\af_g (I) = u_g I u_g^*
      \subset
         {\overline{M (C^* (G, A, \alpha)) I M (C^* (G, A, \alpha)) }}
      = {\overline{C^* (G, A, \alpha) I C^* (G, A, \alpha) }}.
\]
This proves one of the desired inclusions.

For the other,
set $L = \sum_{g \in G} \af_g (I).$
Then $L$ is $\af$-invariant,
so $C^* (G, L, \alpha)$ is an ideal in $C^* (G, A, \alpha)$
which clearly contains~$I.$
Also $C^* (G, L, \alpha) \cap A = L.$
Therefore
\[
{\overline{C^* (G, A, \alpha) I C^* (G, A, \alpha) }} \cap A
 \subset C^* (G, L, \alpha) \cap A
 = L
 = \sum_{g \in G} \af_g (I).
\]
This completes the proof.
\end{proof}

\begin{thm}\label{T-AbCP321}
Let $\af \colon G \to \Aut (A)$
be an action of a finite abelian group~$G$
on a C*-algebra~$A.$
Suppose that $A$ has topological dimension zero.
Then $C^* (G, A, \af)$ and $A^{\af}$ have topological dimension zero.
\end{thm}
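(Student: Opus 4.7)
The strategy is to apply Theorem~\ref{T-AG-321} to the dual action on the crossed product, and then deduce the statement about $A^{\af}$ from it. Write $B = C^*(G, A, \af),$ and let $\widehat{\af} \colon \widehat{G} \to \Aut(B)$ be the dual action of the (still finite) abelian group $\widehat{G}.$ I would first observe that $B^{\widehat{\af}} = A \subset B,$ since $\widehat{\af}_{\chi}(a u_g) = \chi(g) a u_g$ for $a \in A,$ $g \in G,$ $\chi \in \widehat{G},$ and the only such terms fixed by every character are those with $g = 1.$ By hypothesis, $A = B^{\widehat{\af}}$ has topological dimension zero, so the first hypothesis of Theorem~\ref{T-AG-321} (applied with $\widehat{G},$ $B,$ $\widehat{\af}$) is immediate.

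It then remains to verify the compact-ideal hypothesis: for every compact ideal $I \subset A,$ the ideal ${\overline{B I B}} \cap A$ in $A$ should again be compact. Applying Lemma~\ref{L-ClACap-322} to the action $\af$ of $G$ on $A$ gives
\[
{\overline{B I B}} \cap A = \sum_{g \in G} \af_g(I).
\]
Each $\af_g(I)$ is a compact ideal, since $\af_g \in \Aut(A)$ induces a homeomorphism of $\Prim(A)$ carrying $\Prim(I)$ onto $\Prim(\af_g(I))$ and thus preserves compact open subsets (Lemma~\ref{L-CptId311}). So I would reduce to a small ancillary lemma: any finite sum of compact ideals in a C*-algebra is compact. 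To prove it, suppose $J_1, \ldots, J_n$ are compact, set $L = \sum_i J_i,$ and suppose $L = {\overline{\bigcup_{\ld \in \Ld} K_\ld}}$ for an increasing net of ideals $K_\ld.$ A routine approximate-identity argument (use an approximate identity $(e_\mu)$ of $J_i$ to approximate any $a \in J_i$ by products $e_\mu x_\ld$ with $x_\ld \in K_\ld$) gives $J_i = {\overline{\bigcup_{\ld} (J_i \cap K_\ld)}};$ by compactness of $J_i$ there is $\ld_i$ with $J_i \subset K_{\ld_i},$ and choosing $\ld$ above all $\ld_i$ forces $K_\ld = L.$

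With both hypotheses verified, Theorem~\ref{T-AG-321} yields that $B = C^*(G, A, \af)$ has topological dimension zero. The statement about $A^{\af}$ is then immediate: by Proposition~\ref{P-FP303}, $A^{\af}$ is isomorphic to a corner of $B,$ hence to a hereditary subalgebra, and Lemma~\ref{R-604Her} gives topological dimension zero for $A^{\af}.$ The only real work is the compact-ideal bookkeeping needed to feed Theorem~\ref{T-AG-321}; using abelian duality to display $A$ as the fixed point algebra of a finite-group action on $B$ is what lets us invoke that theorem in this direction, and is precisely the step that would fail for nonabelian $G.$
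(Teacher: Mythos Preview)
Your proposal is correct and follows essentially the same route as the paper: apply Theorem~\ref{T-AG-321} to the dual action $\widehat{\af}$ on $B = C^*(G,A,\af),$ using $B^{\widehat{\af}} = A$ and Lemma~\ref{L-ClACap-322} to identify ${\overline{B I B}} \cap A = \sum_{g \in G} \af_g(I),$ then deduce the fixed-point case from Proposition~\ref{P-FP303} and Lemma~\ref{R-604Her}. The only (minor) difference is that where you prove the ancillary lemma ``finite sums of compact ideals are compact'' directly via approximate identities, the paper translates to $\Prim(A)$ via Lemma~\ref{L-CptId311} and simply notes that a finite union of compact open sets is compact open---a shorter path to the same fact.
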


\begin{proof}
We first consider $C^* (G, A, \alpha).$
Let ${\widehat{\af}} \colon
  {\widehat{G}} \to \Aut \big( C^* (G, A, \alpha) \big)$
be the dual action.
Then
$C^* (G, A, \alpha)^{\widehat{\af}} \cong A,$
so has topological dimension zero.

We check that ${\widehat{\af}}$
satisfies the second hypothesis of Theorem~\ref{T-AG-321}.
Let $I \subset A$ be a compact ideal.
Applying Lemma~\ref{L-ClACap-322}
we get
\[
{\overline{C^* (G, A, \alpha) I C^* (G, A, \alpha) }}
   \cap A
 = \sum_{g \in G} \af_{g} (I).
\]
The right hand side is compact by Lemma~\ref{L-CptId311}
and because the union of finitely many compact sets is compact.

Theorem~\ref{T-AG-321} now implies that
$C^* (G, A, \alpha)$ has topological dimension zero.

The result for~$A^{\af}$ now follows from
Lemma~\ref{R-604Her}
and Proposition~\ref{P-FP303}.
\end{proof}

We now turn to an action~$\af$ of a general finite group~$G$
on a C*-algebra~$A,$
but under the assumption
that $A$ separates the ideals in $C^*(G, A, \alpha).$
The proof is much more straightforward,
but the machinery developed above seems to be of little help.

\begin{prp}\label{P:P1_0123}
Let $\af \colon G \to \Aut (A)$ be an action of a finite group $G$
on a C*-algebra~$A.$
Assume that $A$ separates the ideals in $C^*(G, A, \alpha).$
Suppose that $A$ has topological dimension zero.
Then $C^* (G, A, \alpha)$ and~$A^{\af}$
have topological dimension zero.
\end{prp}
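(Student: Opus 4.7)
The plan is to verify condition~(\ref{L-TDZCond311-COE}) of Lemma~\ref{L-TDZCond311} for $\Prim( C^* (G, A, \alpha) )$: every open subset must admit a compact open exhaustion, equivalently (by Lemma~\ref{L-CptId311}), every ideal of $C^* (G, A, \alpha)$ must be a closure of an increasing union of compact ideals. Given an ideal $J \subset C^* (G, A, \alpha),$ the separation hypothesis produces an $\alpha$-invariant ideal $I \subset A$ with $J = C^* (G, I, \af).$ Since $A$ has topological dimension zero, the open set $\Prim (I) \subset \Prim (A)$ has a compact open exhaustion by Lemma~\ref{L-TDZCond311}, which via Lemma~\ref{L-CptId311} translates to an increasing net $(I_\ld)_{\ld \in \Ld}$ of compact ideals of $A$ with $I = \overline{\bigcup_\ld I_\ld}.$

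The next step is to replace each $I_\ld$ with its $\af$-invariant hull $\widetilde{I}_\ld = \sum_{g \in G} \af_g (I_\ld).$ Each $\af_g (I_\ld)$ is a compact ideal (automorphisms of $A$ give homeomorphisms of $\Prim (A)$), and a finite sum of compact ideals corresponds to a finite union of compact open subsets of $\Prim (A),$ which remains compact open; so $\widetilde{I}_\ld$ is a compact $\af$-invariant ideal by Lemma~\ref{L-CptId311}. The chain $I_\ld \subset \widetilde{I}_\ld \subset I$ and $\af$-invariance of $I$ yield $I = \overline{\bigcup_\ld \widetilde{I}_\ld}.$

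The main step is then to prove that $J_\ld := C^* (G, \widetilde{I}_\ld, \af)$ is a compact ideal in $C^* (G, A, \af).$ Let $(K_\mu)_{\mu \in M}$ be an increasing net of ideals with $J_\ld = \overline{\bigcup_\mu K_\mu}.$ The separation hypothesis writes $K_\mu = C^* (G, L_\mu, \af)$ for the $\af$-invariant ideals $L_\mu = K_\mu \cap A,$ which are necessarily increasing. Setting $L = \overline{\bigcup_\mu L_\mu},$ the fact that $C^* (G, -, \af)$ commutes with such directed closures of invariant ideals (being a finite linear combination of left multiplications by $u_g$) gives $C^* (G, L, \af) = \overline{\bigcup_\mu C^* (G, L_\mu, \af)} = J_\ld;$ since the standard conditional expectation $E$ satisfies $E (C^* (G, M, \af) ) = M$ for invariant $M,$ the assignment $M \mapsto C^* (G, M, \af)$ is injective on invariant ideals, and therefore $L = \widetilde{I}_\ld.$ Compactness of $\widetilde{I}_\ld$ now forces $L_\mu = \widetilde{I}_\ld,$ hence $K_\mu = J_\ld,$ for some~$\mu.$

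Since $(J_\ld)_{\ld \in \Ld}$ is increasing with $\overline{\bigcup_\ld J_\ld} = C^* (G, I, \af) = J,$ Lemma~\ref{L-CptId311} delivers the required compact open exhaustion of $\Prim (J),$ and $C^* (G, A, \af)$ has topological dimension zero. The fixed point statement follows by combining Proposition~\ref{P-FP303}, which realizes $A^{\af}$ as a corner of $C^* (G, A, \af),$ with Lemma~\ref{R-604Her}. The principal obstacle I expect is the compactness verification for $J_\ld$: it is here that the separation hypothesis is genuinely used, coupled with the injectivity of $M \mapsto C^* (G, M, \af)$ on invariant ideals, in order to transfer the known compactness of $\widetilde{I}_\ld$ in $A$ up to~$C^* (G, A, \af).$
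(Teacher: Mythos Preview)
Your proposal is correct and follows essentially the same approach as the paper: both reduce to showing every ideal of $C^*(G,A,\af)$ is a closure of an increasing net of compact ideals, both use the separation hypothesis to write ideals as crossed products of invariant ideals, both replace the compact ideals $I_\ld$ by their $\af$-invariant hulls $\sum_{g}\af_g(I_\ld)$, and both handle $A^{\af}$ via Proposition~\ref{P-FP303} and Lemma~\ref{R-604Her}. The only minor difference is that the paper isolates the compactness of $C^*(G,\widetilde{I}_\ld,\af)$ as a separate lemma (Lemma~\ref{L:L1_0123-2}, stated for arbitrary discrete groups) and proves $\widetilde{I}_\ld = \overline{\bigcup_\mu L_\mu}$ by applying the conditional expectation~$E$ directly, whereas you argue inline using that $M \mapsto C^*(G,M,\af)$ commutes with closures of increasing unions (valid for finite~$G$) together with its injectivity; these are equivalent verifications of the same fact.
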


Part of the proof works for reduced crossed products
by actions of discrete groups,
so we give it in that generality.
Let $\af \colon G \to \Aut (A)$ be an action of a discrete group $G$
on a C*-algebra~$A.$
Let $E \colon C^*_{\mathrm{r}} (G, A, \alpha) \to A$
be the canonical conditional expectation
(as in the introduction).
It is immediate that if $I \subset A$ is an $\alpha$-invariant ideal,
then
\begin{equation}\label{Eq:CIE-304}
E (C^*_{\mathrm{r}} (G,  I,  \alpha)) = I.
\end{equation}
It follows that for $\alpha$-invariant ideals $I_1, I_2 \subset A,$
we have
\begin{equation}\label{Eq:Incl}
{\mbox{$I_1 \subset I_2$ if and only if
$C^*_{\mathrm{r}} (G, I_1, \alpha)
 \subset C^*_{\mathrm{r}} (G, I_2, \alpha).$}}
\end{equation}

\begin{lem}\label{L:L1_0123-2}
Let $\af \colon G \to \Aut (A)$ be an action of a discrete group $G$
on a C*-algebra~$A.$
Suppose $A$ separates the ideals in $C^*_{\mathrm{r}} (G, A, \alpha).$
Let $I$ be an $\alpha$-invariant ideal of~$A.$
If $I$ is compact, then $C^*_{\mathrm{r}} (G, I, \alpha)$
is also compact.
\end{lem}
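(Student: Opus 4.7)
The plan is to unravel what the compactness hypothesis on $I$ says about compactness of $C^*_{\mathrm{r}}(G, I, \alpha)$ by using the ideal separation hypothesis to descend from ideals of the crossed product down to $\alpha$-invariant ideals of~$A$, then use the conditional expectation~$E$ to control the net.

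Concretely, I would begin with an increasing net $(J_\lambda)_{\lambda \in \Lambda}$ of ideals in $C^*_{\mathrm{r}}(G, I, \alpha)$ with $\overline{\bigcup_\lambda J_\lambda} = C^*_{\mathrm{r}}(G, I, \alpha)$, and aim to find $\lambda$ with $J_\lambda = C^*_{\mathrm{r}}(G, I, \alpha)$. The first step is to note that since $C^*_{\mathrm{r}}(G, I, \alpha)$ is an ideal of $C^*_{\mathrm{r}}(G, A, \alpha)$ (by Proposition~7.7.9 of~\cite{Pd1}, as already used after Definition~\ref{D:S}), each $J_\lambda$ is itself a closed two-sided ideal of $C^*_{\mathrm{r}}(G, A, \alpha)$. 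Here I use the standard fact that an ideal of an ideal (in the C*-algebraic sense) is an ideal in the ambient C*-algebra, provable via an approximate identity of $C^*_{\mathrm{r}}(G, I, \alpha)$.

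By the ideal separation hypothesis, there is then an $\alpha$-invariant ideal $I_\lambda \subset A$ with $J_\lambda = C^*_{\mathrm{r}}(G, I_\lambda, \alpha)$. The injectivity statement in~(\ref{Eq:Incl}) forces $I_\lambda \subset I_\mu$ whenever $\lambda \leq \mu$, and $I_\lambda \subset I$ for all $\lambda$. Applying the conditional expectation~$E$ and using~(\ref{Eq:CIE-304}) yields $E(J_\lambda) = I_\lambda$ and $E(C^*_{\mathrm{r}}(G, I, \alpha)) = I$. Since $E$ is bounded (in fact contractive), continuity gives
\[
I \;=\; E\bigl(\overline{\ts\bigcup_\lambda J_\lambda}\bigr)
   \;\subset\; \overline{\ts\bigcup_\lambda E(J_\lambda)}
   \;=\; \overline{\ts\bigcup_\lambda I_\lambda},
\]
and combined with $I_\lambda \subset I$ we conclude $\overline{\bigcup_\lambda I_\lambda} = I$.

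Now compactness of the ideal $I$ in~$A$ (Definition~\ref{D:D1_0123}) produces $\lambda$ with $I_\lambda = I$, hence $J_\lambda = C^*_{\mathrm{r}}(G, I, \alpha)$, finishing the proof. The only real point requiring care is verifying that each $J_\lambda$ is an ideal in $C^*_{\mathrm{r}}(G, A, \alpha)$ so that the separation hypothesis applies; after that, everything is a direct application of the formulas~(\ref{Eq:CIE-304}) and~(\ref{Eq:Incl}) and the continuity of~$E$.
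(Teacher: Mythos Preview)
Your proposal is correct and follows essentially the same route as the paper: use ideal separation to write $J_\lambda = C^*_{\mathrm{r}}(G, I_\lambda, \alpha)$, apply the conditional expectation together with~(\ref{Eq:CIE-304}) and~(\ref{Eq:Incl}) to deduce $I = \overline{\bigcup_\lambda I_\lambda}$, then invoke compactness of~$I$. The only difference is cosmetic: the paper starts directly with ideals $J_\lambda$ in the ambient algebra $C^*_{\mathrm{r}}(G, A, \alpha)$ (as the definition of compactness requires), so the ``ideal of an ideal is an ideal'' step you flag is not needed there.
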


\begin{proof}
Let $(J_{\ld})_{\ld \in \Ld}$ be an increasing net of ideals
in $C^*_{\mathrm{r}} (G, A, \alpha)$ such that
\[
C^*_{\mathrm{r}} (G, I, \alpha)
 = {\overline{ \bigcup_{\ld \in \Ld} J_{\ld} }}.
\]
By hypothesis,
there are $\alpha$-invariant ideals $I_{\ld}$
such that $J_{\ld} = C^*_{\mathrm{r}} (G, I_{\ld}, \alpha)$
for all $\ld \in \Ld.$
By~(\ref{Eq:Incl}),
we have $I_{\ld} \subset I$ for all $\ld \in \Ld,$
and moreover $(I_{\ld})_{\ld \in \Ld}$ is increasing.
By~(\ref{Eq:CIE-304}) and because $E$ is continuous, we have
\begin{align*}
I & = E (C^*_{\mathrm{r}} (G, I, \alpha))
    = E \left( {\overline{\bigcup_{\ld \in \Ld}
            C^*_{\mathrm{r}}(G, I_{\ld}, \alpha) }} \right)
     \\
  & \subset {\overline{ E \left(
       \bigcup_{\ld \in \Ld}
          C^*_{\mathrm{r}} (G, I_{\ld}, \alpha) \right) }}
    = {\overline{ \bigcup_{\ld \in \Ld}
          E (C^*_{\mathrm{r}} (G, I_{\ld},  \alpha)) }}
    = {\overline{ \bigcup_{\ld \in \Ld} I_{\ld} }}
    \subset I.
\end{align*}
Thus $I = {\overline{\bigcup_{\ld \in \Ld}  I_{\ld} }}.$
Since $I$ is compact, there is $\ld \in \Ld$
such that $I = I_{\ld}.$
Then
$C^*_{\mathrm{r}} (G, I, \alpha)
 = C^*_{\mathrm{r}} (G, I_{\ld}, \alpha) = J_{\ld}.$
This shows that $C^*_{\mathrm{r}} (G, I, \alpha)$ is compact.
\end{proof}

\begin{proof}[Proof of Proposition~\ref{P:P1_0123}]
We first consider $C^* (G, A, \alpha).$

It follows from Lemma~\ref{L-TDZCond311}
and Lemma~\ref{L-CptId311}
that a C*-algebra~$D$ has topological dimension zero
if and only if every ideal in $D$ is the closure of the union of an
increasing net of compact ideals.

So let $J$ be an arbitrary ideal in $C^* (G, A, \alpha).$
By hypothesis, there is an $\alpha$-invariant ideal $I \subset A$
such that $J = C^* (G, I, \alpha).$
Since $A$ has topological dimension zero, there is an increasing net
$(I_{\ld})_{\ld \in \Ld}$ of compact ideals of $A$ such that
$I = {\overline{\bigcup_{\ld \in \Ld} I_{\ld} }}.$
For $\ld \in \Ld,$ define $L_{\ld} = \sum_{g \in G} \alpha_g (I_{\ld}).$
Then $(L_{\ld})_{\ld \in \Ld}$ is an increasing net
of $\alpha$-invariant ideals
and ${\overline{\bigcup_{\ld \in \Ld} L_{\ld} }} = I.$
Since a finite union of compact sets is compact,
Lemma~\ref{L-CptId311} implies that
$L_{\ld}$ is compact for all $\ld \in \Ld.$
The ideals
$C^* ( G, L_{\ld}, \af )$
are compact by Lemma~\ref{L:L1_0123-2}.
By~(\ref{Eq:Incl}),
these ideals are increasing and satisfy
\[
{\overline{\bigcup_{\ld \in \Ld}
   C^* ( G, L_{\ld}, \af ) }}
 = C^* (G, I, \alpha).
\]
This completes the proof for $C^* (G, A, \alpha).$

The result for~$A^{\af}$ now follows from
Lemma~\ref{R-604Her}
and Proposition~\ref{P-FP303}.
\end{proof}

\begin{cor}\label{C-2729TdzPtOut}
Let $\af \colon G \to \Aut (A)$ be a strongly pointwise outer action
of a finite group $G$ on a C*-algebra~$A.$
Suppose that $A$ has topological dimension zero.
Then $C^* (G, A, \alpha)$ and~$A^{\af}$
have topological dimension zero.
\end{cor}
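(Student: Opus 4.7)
The plan is to combine two results already established in the excerpt: Theorem~\ref{P:IdealPrsv} on ideal separation for strongly pointwise outer actions, and Proposition~\ref{P:P1_0123} on preservation of topological dimension zero under the ideal-separation hypothesis.

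More precisely, first I would invoke Theorem~\ref{P:IdealPrsv}, which asserts that if $\af \colon G \to \Aut(A)$ is a strongly pointwise outer action of a finite group, then $A$ separates the ideals in $C^{*}(G, A, \af)$ in the sense of Definition~\ref{D:S}. This is exactly the hypothesis needed to apply Proposition~\ref{P:P1_0123}.

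Next, given the additional assumption that $A$ has topological dimension zero, Proposition~\ref{P:P1_0123} immediately yields that both $C^{*}(G, A, \af)$ and $A^{\af}$ have topological dimension zero.

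There is no real obstacle: the corollary is formally a direct consequence of the two cited results, and no further calculation is required. (The work has already been done in proving Theorem~\ref{P:IdealPrsv} and in setting up Proposition~\ref{P:P1_0123} via the lemmas on compact ideals and the conditional expectation~$E$.)
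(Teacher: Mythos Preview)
Your proposal is correct and matches the paper's own proof exactly: invoke Theorem~\ref{P:IdealPrsv} to get that $A$ separates the ideals in $C^*(G,A,\af)$, then apply Proposition~\ref{P:P1_0123}.
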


\begin{proof}
It follows from Theorem~\ref{P:IdealPrsv}
that $A$ separates the ideals in $C^* (G, A, \af).$
Now apply Proposition~\ref{P:P1_0123}.
\end{proof}

\section{Purely infinite C*-algebras with finite primitive
 spectrum}\label{Sec:0123}

Let $A$ be a purely infinite C*-algebra,
and let $\af \colon G \to \Aut (A)$ be an action of a finite group~$G$
on~$A.$
We know of no examples in which
$C^* (G, A, \alpha)$ and $A^{\af}$ are not purely infinite,
but we have not been able to prove that they are.
The main result of this section is that,
if $A$ has a composition series in which all the subquotients
have finite primitive ideal spaces,
then $C^* (G, A, \alpha)$ and $A^{\af}$ must be purely infinite.
A counterexample to the general statement,
if it exists, must therefore be fairly complicated.

We record the following standard fact.

\begin{lem}\label{L-FinIdeals304}
Let $A$ be a C*-algebra.
Then $A$ has finitely many ideals if and only if $\Prim (A)$ is finite.
\end{lem}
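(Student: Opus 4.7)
The plan is to reduce the statement to a fact about $T_0$ topological spaces using the standard Galois correspondence between ideals of a C*-algebra and open subsets of its primitive ideal space.

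First I would recall that there is an inclusion-preserving bijection between the closed two-sided ideals of $A$ and the open subsets of $\Prim(A)$, sending an ideal $I$ to $\{P \in \Prim(A) : I \not\subset P\}$ and an open set $U$ to $\bigcap_{P \notin U} P$. Consequently, $A$ has finitely many ideals if and only if $\Prim(A)$ has finitely many open subsets. This immediately gives the implication ``$\Prim(A)$ finite $\Rightarrow$ $A$ has finitely many ideals'', since a finite set has only finitely many subsets.

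For the converse, I would use that $\Prim(A)$ is a $T_0$ space (a standard fact, as in Section~3.1 of~\cite{Dx}). Assuming $\Prim(A)$ has only finitely many open subsets, for each $x \in \Prim(A)$ define $U_x$ to be the intersection of all open sets containing~$x$; because the collection of open sets is finite, $U_x$ is itself open. If $x \neq y$, then by the $T_0$ axiom there is an open set that contains one of the two points but not the other, and it follows that $U_x \neq U_y$. Hence the map $x \mapsto U_x$ is an injection from $\Prim(A)$ into the finite collection of open subsets of $\Prim(A)$, so $\Prim(A)$ is finite.

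There is no real obstacle here; the only point that requires any care is invoking the $T_0$ property of $\Prim(A)$, without which the implication would fail (a generic indiscrete space on infinitely many points has only two open sets). The proof is essentially a one-paragraph appeal to these two standard facts and can simply be presented as such, referring the reader to the appropriate sections of~\cite{Dx} or~\cite{Pd1} for the ideal/open-set correspondence and the $T_0$ property.
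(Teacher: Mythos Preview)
Your argument is correct. For the direction ``$\Prim(A)$ finite $\Rightarrow$ finitely many ideals'' you and the paper do exactly the same thing: invoke the bijection between ideals and open subsets of $\Prim(A)$.

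For the other direction your route differs from the paper's. You pass through the ideal/open-set correspondence again, deduce that $\Prim(A)$ has only finitely many open sets, and then use the $T_0$ property to conclude that $\Prim(A)$ itself is finite. This is valid. The paper instead dispatches this direction in one line by observing that every primitive ideal is in particular a (closed two-sided) ideal, so if $A$ has only finitely many ideals then a fortiori it has only finitely many primitive ideals. That shortcut avoids the $T_0$ argument entirely. Your approach has the mild advantage of isolating a purely topological statement (a $T_0$ space with finitely many open sets is finite), but for this lemma the paper's observation is the more economical one.
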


\begin{proof}
The forward implication is trivial.
The reverse follows from the fact that ideals in $A$
are in one to one correspondence with open subsets of $\Prim (A).$
\end{proof}

\begin{lem}\label{L:L1_0126}
Let $A$ be a C*-algebra and let $I$ be an ideal of~$A.$
The following are equivalent:
\begin{enumerate}
\item\label{L:L1_0126:1}
$\Prim (A)$ is finite.
\item\label{L:L1_0126:2}
$\Prim (I)$ and $\Prim (A / I)$ are finite.
\end{enumerate}
\end{lem}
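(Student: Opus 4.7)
The plan is to use the standard identification of $\Prim(I)$ and $\Prim(A/I)$ with complementary subsets of $\Prim(A)$. Specifically, I would invoke the well known fact (for example, 3.2.1 of Dixmier's C*-algebras book) that the map $P \mapsto P \cap I$ defines a homeomorphism from the open set $U = \{P \in \Prim(A) : I \not\subset P\}$ onto $\Prim(I)$, while $P \mapsto P/I$ defines a homeomorphism from the closed complement $\Prim(A) \setminus U$ onto $\Prim(A/I)$. As sets, this gives a disjoint union decomposition
\[
\Prim(A) = U \sqcup \big( \Prim(A) \setminus U \big)
  \cong \Prim(I) \sqcup \Prim(A/I).
\]

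The forward implication is then immediate: if $\Prim(A)$ is finite, both of the pieces $U$ and $\Prim(A) \setminus U$ are finite, so $\Prim(I)$ and $\Prim(A/I)$ are finite. For the reverse implication, finiteness of $\Prim(I)$ and $\Prim(A/I)$ means both pieces of the decomposition are finite, so their union $\Prim(A)$ is finite as well.

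There is no real obstacle here; the statement is essentially just the observation that $\Prim$ turns the extension $0 \to I \to A \to A/I \to 0$ into a disjoint union decomposition into an open and a closed piece. The only thing to be careful about is to cite (or briefly recall) the homeomorphisms above, since they are the content of the argument; everything else is just finiteness of unions.
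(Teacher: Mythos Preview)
Your proposal is correct and follows exactly the same approach as the paper: the paper's proof simply says the result is immediate from the fact that $\Prim(A)$ is the (not necessarily topological) disjoint union of $\Prim(I)$ and $\Prim(A/I)$. Your version is slightly more detailed in that you give an explicit reference for the identification and spell out both directions, but the content is identical.
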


\begin{proof}
The result is immediate from the fact that
$\Prim (A)$ is the (not necessarily topological)
disjoint union of $\Prim (I)$ and $\Prim (A/I).$
\end{proof}

\begin{cor}\label{C-OldL2_0123}
Let $I$ and $J$ be ideals in a C*-algebra~$A$
which have finite primitive ideal spaces.
Then $\Prim (I + J)$ is finite.
\end{cor}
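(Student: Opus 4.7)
The plan is to reduce the statement to an application of Lemma~\ref{L:L1_0126} via the second isomorphism theorem for C*-algebras. Recall that the sum $I + J$ of two (closed two-sided) ideals in a C*-algebra is again a closed ideal, and that $I \cap J$ is a closed ideal of both $I$ and~$J.$

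First I would observe that $I$ is an ideal in $I + J,$ fitting into a short exact sequence
\[
0 \longrightarrow I \longrightarrow I + J
   \longrightarrow (I + J)/I \longrightarrow 0.
\]
The standard second isomorphism theorem gives a canonical isomorphism $(I + J)/I \cong J/(I \cap J).$ Since $I \cap J$ is an ideal of~$J$ and $\Prim (J)$ is finite by hypothesis, Lemma~\ref{L:L1_0126} (applied to $J$ with its ideal $I \cap J$) implies that $\Prim (J/(I \cap J))$ is finite. Hence $\Prim ((I + J)/I)$ is finite.

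Now $\Prim (I)$ is finite by hypothesis and $\Prim ((I + J)/I)$ is finite by the previous step, so a second application of Lemma~\ref{L:L1_0126}, this time to $I + J$ with its ideal~$I,$ yields that $\Prim (I + J)$ is finite.

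There is no real obstacle here; the only point that requires a moment of care is noticing that $I + J$ is automatically closed (so that the quotient and the second isomorphism theorem make sense as statements about C*-algebras), and that the usual algebraic second isomorphism theorem passes through to this C*-algebraic setting.
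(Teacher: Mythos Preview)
Your proof is correct and follows essentially the same route as the paper: both use the short exact sequence $0 \to I \to I + J \to J/(I \cap J) \to 0$ and apply Lemma~\ref{L:L1_0126} twice, once to deduce that $\Prim(J/(I \cap J))$ is finite from the finiteness of $\Prim(J),$ and once to conclude that $\Prim(I+J)$ is finite. Your version is slightly more explicit about invoking the second isomorphism theorem, but the argument is the same.
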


\begin{proof}
Consider the short exact sequence of C*-algebras
\[
0 \longrightarrow I
  \longrightarrow I + J
  \longrightarrow J / (I \cap J)
  \longrightarrow 0.
\]
The space $\Prim ( J / (I \cap J) )$ is finite
by one direction in Lemma~\ref{L:L1_0126},
so $\Prim (I + J)$ is finite by the other direction.
\end{proof}

\begin{lem}\label{L:L2_0123}
Let $I$ and $J$ be ideals in a C*-algebra~$A.$
Assume that $I$ and $J$ are purely infinite.
Then $I + J$ is purely infinite.
\end{lem}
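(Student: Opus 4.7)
The plan is to exhibit $I+J$ as an extension of one purely infinite C*-algebra by another and invoke a permanence result from~\cite{KR}.

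First I would write down the canonical short exact sequence
\[
0 \longrightarrow I \longrightarrow I + J
  \longrightarrow (I+J)/I \longrightarrow 0,
\]
and then use the second isomorphism theorem to identify the quotient as $(I+J)/I \cong J/(I \cap J)$. The kernel is purely infinite by hypothesis on~$I$. For the quotient, since $I \cap J$ is an ideal in~$J$ and $J$ is purely infinite, I would apply the fact from~\cite{KR} that pure infiniteness descends to quotients by closed two-sided ideals to conclude that $J/(I \cap J)$ is purely infinite.

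At this point both the ideal and the quotient in the above sequence are purely infinite, so I would finish by citing the extension result of Kirchberg--R\o rdam (Theorem~4.19 of~\cite{KR}), which says that if $I \subset A$ is an ideal with $I$ and $A/I$ both purely infinite, then $A$ itself is purely infinite. Applied to our sequence, this gives that $I+J$ is purely infinite.

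The main obstacle is purely bibliographic rather than mathematical: one must check that the precise extension statement is available in the form needed. Pure infiniteness in the Kirchberg--R\o rdam sense (Definition~4.1 of~\cite{KR}) is notably subtle for nonsimple algebras, and permanence under extensions is a nontrivial theorem. Once that result is quoted, the argument is a short diagram chase. The isomorphism $(I+J)/I \cong J/(I \cap J)$ and the stability of pure infiniteness under quotients are essentially routine, so no additional analytic input beyond the two cited results from~\cite{KR} should be necessary.
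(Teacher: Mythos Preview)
Your proposal is correct and matches the paper's proof essentially line for line: the paper writes the same short exact sequence with quotient $J/(I\cap J)$, invokes Proposition~4.3 of~\cite{KR} for quotients and Theorem~4.19 of~\cite{KR} for extensions, and concludes. Your added remarks about the second isomorphism theorem and the bibliographic caveat are accurate but not needed beyond what the paper does.
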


\begin{proof}
Consider the short exact sequence of C*-algebras
\[
0 \longrightarrow I
  \longrightarrow I + J
  \longrightarrow J / (I \cap J)
  \longrightarrow 0.
\]
Pure infiniteness
passes to quotients (Proposition~4.3 of~\cite{KR}),
so $J / (I \cap J)$ is purely infinite.
Extensions of purely infinite C*-algebras are purely infinite
(Theorem~4.19 of~\cite{KR}),
so $I + J$ is purely infinite.
\end{proof}

\begin{lem}\label{L:L2_0126}
Let $\af \colon G \to \Aut (A)$ be an action of a finite group $G$
on a C*-algebra~$A.$
Suppose that $A$ is the sum of orthogonal ideals which
are permuted transitively by the action of~$G.$
Let $I$ be one of these ideals,
and let $H$ be the subgroup of elements of $G$
which carry $I$ into itself, so that $H$ acts on~$I.$
Set $n = \card (G / H).$
Then $C^* (G, A, \af) \cong M_n (C^* (H, I, \alpha)).$
\end{lem}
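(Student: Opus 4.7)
The plan is to realize $C^* (G, A, \alpha)$ as $M_n$ over a corner that I then identify with $C^* (H, I, \alpha).$ First I would fix left coset representatives $g_1 = 1, g_2, \ldots, g_n$ for $G/H$ and set $I_i = \alpha_{g_i} (I),$ so that the transitivity hypothesis together with $H$ being the stabilizer of $I$ gives $A = \bigoplus_{i=1}^{n} I_i$ and identifies the permutation action of $G$ on the summands with the natural action on $G/H.$ Let $p_i \in M (A)$ be the central projection onto $I_i$; these are pairwise orthogonal, satisfy $\sum_{i=1}^{n} p_i = 1$ in $M (A),$ and $\alpha_{g_i} (p_1) = p_i.$ Extend the $p_i$ to multipliers of $C^* (G, A, \alpha)$ in the standard way via $p_i \cdot (a u_g) = (p_i a) u_g$ and the analogous right action.

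Next I would set $v_i = u_{g_i} p_1 \in M ( C^* (G, A, \alpha) )$ and check that $v_i^* v_i = p_1$ and $v_i v_i^* = u_{g_i} p_1 u_{g_i}^* = \alpha_{g_i} (p_1) = p_i.$ The elements $e_{ij} := v_i v_j^* = u_{g_i} p_1 u_{g_j}^*$ then form a system of $n \times n$ matrix units in $M ( C^* (G, A, \alpha) )$ with $\sum_{i=1}^{n} e_{ii} = 1,$ which yields a canonical isomorphism $C^* (G, A, \alpha) \cong M_n (B),$ where $B = p_1 C^* (G, A, \alpha) p_1.$

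It remains to identify $B$ with $C^* (H, I, \alpha).$ For $a \in A$ and $g \in G,$ I would compute
\[
p_1 a u_g p_1 = p_1 a \alpha_g (p_1) u_g = p_1 a p_{g \cdot 1} u_g,
\]
where $g \cdot i$ denotes the induced $G$-action on $\{ 1, \ldots, n \}.$ By orthogonality of the $p_j,$ this vanishes unless $g \cdot 1 = 1,$ i.e.\ $g \in H,$ in which case it equals $(p_1 a p_1) u_g$ with $p_1 a p_1 \in p_1 A = I.$ Hence $B$ is the closed linear span of $\{ b u_h : b \in I, \, h \in H \},$ and the canonical inclusion $C^* (H, I, \alpha) \to B$ sending $b u_h$ to $b u_h$ is a surjective $*$-homomorphism; injectivity is immediate because $H$ is finite (so there is no full/reduced ambiguity) and because $I \hookrightarrow A$ is a faithful embedding of $H$-C*-algebras.

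The main technical subtlety is the non-unital case: one must justify that $p_1,$ which generally lies in $M (A)$ rather than $A,$ yields a well-defined projection in $M ( C^* (G, A, \alpha) )$ and that the above manipulations on elements of the form $a u_g$ are legitimate. Once this is set up, everything else reduces to routine finite-sum calculations in the $*$-algebraic group ring, since $|G|$ is finite.
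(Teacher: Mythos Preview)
Your argument is correct. The paper, however, does not give a proof at all: it simply observes that the statement is a special case of Theorem~2.13(i) of Green's paper on imprimitivity algebras, as noted by Rieffel after Proposition~2.3 of~\cite{RfFG}. What you have written is essentially the direct, elementary unpacking of that citation in the finite-group setting: building matrix units $e_{ij} = u_{g_i} p_1 u_{g_j}^*$ from the central projections onto the orthogonal summands and then identifying the corner $p_1 C^*(G,A,\alpha) p_1$ with $C^*(H,I,\alpha)$ via the canonical inclusion. The advantage of your route is that it is self-contained and makes the isomorphism explicit; the paper's route is shorter and places the result in the context of Green's general structure theory, which also covers infinite and locally compact groups. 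Your remark about the nonunital case is apt: one uses that the canonical extension of $\alpha$ to $M(A)$ makes $M(A)$ embed $G$-equivariantly in $M(C^*(G,A,\alpha))$, after which the finite-sum computations go through as you describe. The one place where ``immediate'' is slightly generous is injectivity of $C^*(H,I,\alpha) \to p_1 C^*(G,A,\alpha) p_1$; the clean way to see it is that the standard conditional expectation $E \colon C^*(G,A,\alpha) \to A$ restricts on the image to the faithful conditional expectation $C^*(H,I,\alpha) \to I$.
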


\begin{proof}
As described after the proof of Proposition~2.3 of~\cite{RfFG},
this is a special case of Theorem 2.13(i) of~\cite{Gr}.
\end{proof}

\begin{lem}\label{L:L3_0126}
Let $\af \colon G \to \Aut (A)$ be an action of a finite group $G$
on a C*-algebra~$A.$
Assume that $A$ is $\af$-simple.
Then $C^* (G, A, \af)$
is a finite direct sum of simple C*-algebras.
Moreover,
if in addition $A$ is purely infinite,
then $C^* (G, A, \alpha)$ is a finite direct sum
of simple purely infinite C*-algebras.
\end{lem}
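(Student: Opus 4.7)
The plan is a two-stage reduction to the case $A$ is simple, where the conclusion is a classical structure theorem.

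Stage~1 shows $A$ itself decomposes as a finite direct sum of simple ideals transitively permuted by $G.$ The key step is to prove $\Prim (A)$ is a single finite $G$-orbit and is discrete. Since $\af$-invariant ideals of $A$ correspond to $G$-invariant open subsets of $\Prim (A),$ $\af$-simplicity forces the $G$-action on $\Prim (A)$ to be minimal. Given $P, Q \in \Prim (A),$ the $\af$-invariant ideal $\bigcap_{g \in G} \af_g (Q)$ is proper and hence $= 0,$ so is contained in $P,$ and primeness of $P$ combined with finiteness of $G$ yields some $\af_g (Q) \subseteq P$; symmetrically $\af_h (P) \subseteq Q$ for some $h.$ Setting $k = h g$ of order $m$ in $G,$ iterate the resulting inclusion $\af_k (Q) \subseteq Q$:
\[
Q \supseteq \af_k (Q) \supseteq \af_{k^2} (Q) \supseteq \cdots \supseteq \af_{k^m} (Q) = Q,
\]
forcing equality throughout and in particular $\af_h (P) = Q,$ so $P, Q$ are $G$-conjugate. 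The same iteration trick rules out any strict specialization $P \subsetneq \af_k (P)$ in the orbit (it would produce an ever-increasing chain returning to $P$), so $\Prim (A)$ is $T_1$; being finite, it is therefore discrete, and $A = \bigoplus_{i=1}^{n} A_i$ with each $A_i$ having one-point primitive spectrum (hence simple) and $G$ permuting the $A_i$ transitively.

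Stage~2 applies Lemma~\ref{L:L2_0126} with $H$ the stabilizer of $A_1$ in $G$ to get
\[
C^* (G, A, \af) \cong M_{[G : H]} \bigl( C^* (H, A_1, \af|_H) \bigr).
\]
Matrix amplification preserves the property ``finite direct sum of simple C*-algebras'' summand by summand, so it suffices to treat the case $A$ simple. A standard argument takes $N \lhd G$ to be the normal subgroup of elements acting innerly on $A$ (which for simple $A$ have scalar 2-cocycle), writes $C^* (N, A, \af|_N) \cong A \otimes C^* (N, c) \cong \bigoplus_j M_{d_j} (A),$ a finite direct sum of simple algebras, and then handles the residual action of $G / N$ by iterating Lemma~\ref{L:L2_0126} and invoking Kishimoto's theorem on simplicity of crossed products by outer actions at the final outer step. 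This simple case is the main obstacle; the delicate point is checking that the induction on $|G|$ actually terminates with a fully outer action.

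For the second sentence, each $A_i$ is a hereditary subalgebra of the purely infinite $A,$ hence purely infinite by Proposition~4.17 of~\cite{KR}. Corollary~4.4 of~\cite{JO} (cited after Proposition~\ref{P:P1_0019}) applied to the simple purely infinite $A_1$ yields pure infiniteness of $C^* (H, A_1, \af|_H),$ Theorem~4.23 of~\cite{KR} lifts this to the matrix amplification $C^* (G, A, \af),$ and Proposition~4.3 of~\cite{KR} transfers pure infiniteness to each of its simple direct summands.
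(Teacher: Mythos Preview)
Your overall route matches the paper's: decompose $A$ into simple orthogonal ideals permuted transitively by~$G,$ apply Lemma~\ref{L:L2_0126} to reduce to a simple building block, and invoke~\cite{JO} for pure infiniteness. Your Stage~1 is a correct self-contained derivation of what the paper simply cites as ``the discussion at the beginning of Section~3 of~\cite{RfFG}''; for Stage~2 the paper just quotes Theorem~3.1 of~\cite{RfFG} rather than re-proving it, and for the purely infinite part it cites Theorem~4.5 of~\cite{JO}, noting explicitly that the proofs of Theorem~4.2 and Corollary~4.4 there go through in the nonunital case with multiplier algebras.

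There is a genuine gap in your Stage~2 sketch. Once you form $C^* (N, A, \af|_N),$ the iterated crossed product decomposition
\[
C^* (G, A, \af) \cong C^* \bigl( G/N, \, C^* (N, A, \af|_N), \, \cdot \, \bigr)
\]
is in general a \emph{twisted} crossed product (the extension $1 \to N \to G \to G/N \to 1$ need not split), so Lemma~\ref{L:L2_0126} as stated does not directly apply to the residual $G/N$ step, and Kishimoto's simplicity theorem must likewise be invoked in a twisted form. Your remark that ``the delicate point is checking that the induction on $|G|$ actually terminates'' understates the problem: it is not merely termination but the very formulation of the inductive step that needs the twisted machinery. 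This is exactly the content of Theorem~3.1 of~\cite{RfFG}, which is why the paper cites it rather than reproducing its proof. Separately, Corollary~4.4 of~\cite{JO} is stated for unital algebras; you should note, as the paper does, that the argument carries over to the nonunital setting.
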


\begin{proof}
Since $A$ is $\alpha$-simple,
the discussion at the beginning of Section~3 of~\cite{RfFG}
implies that the hypotheses of Lemma~\ref{L:L2_0126} hold,
with, in addition, $I$~being simple.
Set $n = \card (G / H).$
Then $C^* (G, A, \af) \cong M_n (C^* (H, I, \alpha)).$
Theorem 3.1 of~\cite{RfFG} now implies that $C^* (G, A, \af)$ is a
finite direct of simple C*-algebras.

If $I$ is unital,
the second part conclusion now follows from Theorem~4.5 of~\cite{JO}.
However, the proof of this theorem
(including the proofs of Theorem~4.2
and Corollary~4.4 of~\cite{JO};
the nonunital case of Theorem~2.1 of~\cite{JO}
is the first part of the present lemma)
also works for nonunital C*-algebras,
provided one uses multiplier algebras at the appropriate places.
\end{proof}

\begin{thm}\label{L:L3_0114}
Let $\af \colon G \to \Aut (A)$ be an action of a finite group $G$
on a C*-algebra~$A.$
Assume that $A$ has finitely many $\alpha$-invariant ideals.
Then $\Prim (C^* (G, A, \alpha))$ is finite.
Moreover, if in addition $A$ is purely infinite,
then $C^* (G, A, \alpha)$ is purely infinite.
\end{thm}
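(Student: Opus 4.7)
The plan is to induct on the number~$n$ of $\af$-invariant ideals of~$A$ (which is finite by hypothesis). The base case is $n = 2$, meaning $A$ is $\af$-simple. In this case Lemma~\ref{L:L3_0126} shows that $C^* (G, A, \af)$ is a finite direct sum of simple C*-algebras, which forces $\Prim (C^* (G, A, \af))$ to be finite; and if $A$ is purely infinite, then the same lemma says $C^* (G, A, \af)$ is in fact a finite direct sum of simple purely infinite C*-algebras, hence purely infinite.

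For the inductive step, suppose $A$ has at least three $\af$-invariant ideals, and pick a proper nonzero $\af$-invariant ideal $I \subset A$. The $\af$-invariant ideals of $I$ are exactly the $\af$-invariant ideals of $A$ contained in~$I$, a strictly smaller family since $A \not\subset I$; similarly, the $\af$-invariant ideals of $A/I$ are in bijection with the $\af$-invariant ideals of~$A$ containing~$I$, again a strictly smaller family since the zero ideal of~$A$ does not contain~$I$. So the inductive hypothesis applies to both $I$ and $A/I$, each equipped with the induced action of~$G$. Because $G$ is finite and hence amenable, taking crossed products of the equivariant short exact sequence
\[
0 \longrightarrow I \longrightarrow A \longrightarrow A/I \longrightarrow 0
\]
yields an exact sequence
\[
0 \longrightarrow C^* (G, I, \af)
  \longrightarrow C^* (G, A, \af)
  \longrightarrow C^* (G, A/I, \af)
  \longrightarrow 0.
\]

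By the inductive hypothesis, $\Prim (C^* (G, I, \af))$ and $\Prim (C^* (G, A/I, \af))$ are both finite, so Lemma~\ref{L:L1_0126} applied to this extension gives that $\Prim (C^* (G, A, \af))$ is finite. For the second assertion, if in addition $A$ is purely infinite, then so are $I$ (a hereditary subalgebra, Proposition~4.17 of~\cite{KR}) and $A/I$ (a quotient, Proposition~4.3 of~\cite{KR}); hence by induction $C^* (G, I, \af)$ and $C^* (G, A/I, \af)$ are purely infinite, and since pure infiniteness is closed under extensions (Theorem~4.19 of~\cite{KR}, as already used in Lemma~\ref{L:L2_0123}), we conclude that $C^* (G, A, \af)$ is purely infinite.

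There is no real obstacle once one has Lemma~\ref{L:L3_0126} in hand: the entire argument is routine devissage along a composition series of $\af$-invariant ideals, and the genuine content of the theorem sits in the $\af$-simple base case, which in turn rests on Theorem~3.1 of~\cite{RfFG} (for the finite direct sum of simples) and on Theorem~4.5 of~\cite{JO} (for pure infiniteness in the $\af$-simple situation).
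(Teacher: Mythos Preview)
Your proof is correct and follows essentially the same approach as the paper: both reduce to the $\af$-simple case handled by Lemma~\ref{L:L3_0126}, and then climb back up through an extension using exactness of crossed products by amenable groups together with Lemma~\ref{L:L1_0126} and Theorem~4.19 of~\cite{KR}. The only cosmetic difference is that the paper explicitly builds a finite composition series with $\af$-simple subquotients and inducts along its length, whereas you induct directly on the number of $\af$-invariant ideals by splitting off an arbitrary proper nonzero invariant ideal; these are equivalent organizations of the same d\'evissage.
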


\begin{proof}
Since $A$ has finitely many $\alpha$-invariant ideals,
every $\alpha$-invariant ideal $I \subset A$
contains a maximal strictly smaller $\alpha$-invariant ideal~$J.$
In particular, $I / J$ is $\alpha$-simple.
An induction argument therefore shows that
$A$ has a finite composition series
$(I_k)_{0 \leq k \leq n}$
consisting of $\alpha$-invariant ideals
such that $I_{k + 1} / I_k$ is $\alpha$-simple
for $k = 0, 1, \ldots, n - 1.$

Since $G$ is amenable, for $k = 0, 1, \ldots, n - 1$ we have a
short exact sequence of C*-algebras
\begin{equation}\label{Eq:Star}
0 \longrightarrow C^* (G, I_k, \alpha)
  \longrightarrow C^* (G, I_{k + 1}, \alpha)
  \longrightarrow C^* (G, \, I_{k + 1} / I_k, \, \alpha)
  \longrightarrow 0.
\end{equation}
Lemma~\ref{L:L3_0126} implies that
$C^* (G, \, I_{k + 1} / I_k, \, \alpha)$
has finite primitive ideal space.
Moreover, if $A$ is purely infinite,
then $I_{k + 1} / I_k$ is purely infinite
by Theorem~4.19 of~\cite{KR},
so Lemma~\ref{L:L3_0126} implies that
$C^* (G, \, I_{k + 1} / I_k, \, \alpha)$ is also purely infinite.

An induction argument using Lemma~\ref{L:L1_0126}
now shows that $\Prim (C^* (G, A, \alpha))$ is finite.
If $A$ is purely infinite,
using Theorem~4.19 of~\cite{KR} in the induction argument
shows that then $C^* (G, A, \alpha)$ is also purely infinite.
\end{proof}

\begin{cor}\label{L:L5_0126}
Let $\af \colon G \to \Aut (A)$ be an action of a finite group $G$
on a C*-algebra~$A.$
If $A$ is purely infinite
and has finitely many $\af$-invariant ideals,
then $C^* (G, A, \af)$ has the ideal property.
\end{cor}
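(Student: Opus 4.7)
The plan is to combine Theorem~\ref{L:L3_0114} with a direct induction on the lattice of ideals. That theorem, together with Lemma~\ref{L-FinIdeals304}, gives that $B := C^*(G, A, \alpha)$ is purely infinite and has only finitely many ideals. It therefore suffices to prove the following general fact: \emph{any purely infinite C*-algebra~$B$ with finitely many ideals has the ideal property}. I would prove this by induction on the number~$n$ of ideals of~$B$. The base case $n = 2$ is immediate, since a simple purely infinite C*-algebra contains a nonzero projection, and by simplicity this projection generates the whole algebra as an ideal.

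For the inductive step, I would choose a minimal nonzero ideal $I \subset B$. Because any closed two-sided ideal of~$I$ is automatically a closed two-sided ideal of~$B$, minimality forces $I$ to be simple; by Proposition~4.17 of~\cite{KR}, $I$ is also purely infinite, hence simple purely infinite, and in particular has real rank zero. By Proposition~4.3 of~\cite{KR}, $B/I$ is purely infinite, and it has strictly fewer ideals than~$B$, so $B/I$ has the ideal property by the inductive hypothesis.

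Given any nonzero ideal $J \subset B$, minimality of $I$ forces $I \cap J \in \{ 0, I \}$. If $I \cap J = 0$, then $J$ and $I$ are orthogonal ideals and the quotient map restricts to a $*$-isomorphism $J \to (J+I)/I$ onto an ideal of $B/I$; this ideal is generated by its projections by induction, and the bound $\| j - i \| \geq \| j \|$ for orthogonal $j \in J,\ i \in I$ (a consequence of $J \perp I$) transfers any approximation $\bar x \approx \sum \bar b_k \bar p_k \bar c_k$ in $(J+I)/I$ to a genuine approximation in~$J$ by elements of $B \cdot (\text{projections of } J) \cdot B$. If instead $I \subset J$, then $J/I$ is an ideal of $B/I$, again generated by its projections by induction; real rank zero of~$I$ together with the Brown--Pedersen projection lifting theorem lifts each such projection to a projection of~$B$, which automatically sits in~$J$ since $I \subset J$, and a final approximation using that $I$ is itself generated by its projections (simple purely infinite) absorbs the residual error lying in~$I$.

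The main obstacle is the case $I \subset J$ of the inductive step, where lifting projections from $J/I$ to~$J$ requires real rank zero of the minimal ideal~$I$; this is supplied by the classical result that simple purely infinite C*-algebras have real rank zero. The pure infiniteness assumption is used three times: to produce nontrivial projections in the base case, to ensure $I$ is simple purely infinite (hence real rank zero) at every inductive step via Proposition~4.17 of~\cite{KR}, and to pass pure infiniteness to the quotient $B/I$ via Proposition~4.3 of~\cite{KR}.
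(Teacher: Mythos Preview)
Your overall strategy matches the paper's: reduce via Theorem~\ref{L:L3_0114} to showing that any purely infinite C*-algebra $B$ with finitely many ideals has the ideal property. The paper simply cites Proposition~2.3 of~\cite{KNP} for this fact; you attempt to reprove it by induction on the number of ideals. The base case and the case $I \cap J = 0$ of your inductive step are fine, but the case $I \subset J$ has a genuine gap.

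The ``Brown--Pedersen projection lifting theorem'' you invoke---that real rank zero of the ideal $I$ forces every projection in $B/I$ to lift to a projection in $B$---is false as stated. The actual Brown--Pedersen result is that $B$ has real rank zero if and only if $I$ and $B/I$ have real rank zero \emph{and} projections lift; the lifting condition is genuinely independent of the other two. A counterexample sits inside the paper's own Example~\ref{E:TwoStep}: take $B = M(K \otimes D)$ with $D$ the Kirchberg algebra having $K_0(D)=0$ and $K_1(D) \cong \Z_2$. Then $B$ is purely infinite (an extension of the simple purely infinite algebra $Q(K\otimes D)$ by the simple purely infinite algebra $K\otimes D$, so Theorem~4.19 of~\cite{KR} applies) with exactly three ideals, and the minimal ideal $I = K \otimes D$ has real rank zero. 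But the projection $e \in B/I = Q(K\otimes D)$ with nontrivial $K_0$-class cannot lift to $B$: the boundary map $K_0(B/I) \to K_1(I)$ is an isomorphism $\Z_2 \to \Z_2$, while $K_0(B)=0$. Your argument applied to $J = B$ would try to lift $e$ and fail. (The conclusion survives in this particular example because $1 \in B$ already generates $B$ as an ideal, but your mechanism does not detect this; a non-unital variant would leave the argument with nothing to fall back on.)
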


\begin{proof}
By Theorem~\ref{L:L3_0114},
the algebra $C^* (G, A, \af)$ is purely infinite
and has finite primitive ideal space.
Proposition~2.3 of~\cite{KNP}
therefore implies that $C^* (G, A, \af)$ has the ideal property.
\end{proof}

\begin{cor}\label{C-L3_0114-304}
Let $\af \colon G \to \Aut (A)$ be an action of a finite group $G$
on a C*-algebra~$A.$
Assume that $A$ has finitely many $\alpha$-invariant ideals.
Then $\Prim (A^{\af})$ is finite.
Moreover, if in addition $A$ is purely infinite,
then $A^{\af}$ is purely infinite.
\end{cor}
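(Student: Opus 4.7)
The plan is to deduce everything from Theorem~\ref{L:L3_0114} by passing to the fixed point algebra via Proposition~\ref{P-FP303}, which exhibits $A^{\af}$ as a corner of $C^*(G, A, \alpha).$

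First, Theorem~\ref{L:L3_0114} gives that $\Prim(C^*(G, A, \alpha))$ is finite. By Lemma~\ref{L-FinIdeals304}, this is equivalent to $C^*(G, A, \alpha)$ having only finitely many ideals. Now, as in the proof of Lemma~\ref{R-604Her}, for any corner $pBp$ of a C*-algebra $B$ the space $\Prim(pBp)$ is homeomorphic to an open subset of $\Prim(B)$ (Theorem 5.5.5 of~\cite{Mr}). Since $A^{\af}$ is a corner of $C^*(G, A, \alpha)$ by Proposition~\ref{P-FP303}, $\Prim(A^{\af})$ is (homeomorphic to) an open subset of the finite set $\Prim(C^*(G, A, \alpha)),$ hence is itself finite.

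For the second assertion, assume in addition that $A$ is purely infinite. Then Theorem~\ref{L:L3_0114} gives that $C^*(G, A, \alpha)$ is purely infinite. Since $A^{\af}$ is isomorphic to a corner, hence a hereditary subalgebra, of $C^*(G, A, \alpha),$ Proposition~4.17 of~\cite{KR} implies that $A^{\af}$ is purely infinite.

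There is no real obstacle here: both assertions reduce, via Proposition~\ref{P-FP303}, to the corresponding properties of the full crossed product, which have already been established in Theorem~\ref{L:L3_0114}, combined with the standard facts that pure infiniteness passes to hereditary subalgebras and that the primitive ideal space of a corner embeds as an open subset of the primitive ideal space of the ambient algebra.
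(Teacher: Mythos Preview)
Your proof is correct and follows essentially the same approach as the paper: apply Theorem~\ref{L:L3_0114} to the crossed product, then use Proposition~\ref{P-FP303} together with the fact that the primitive ideal space of a corner embeds in that of the ambient algebra and that pure infiniteness passes to hereditary subalgebras (Proposition~4.17 of~\cite{KR}). The only difference is that you cite Theorem~5.5.5 of~\cite{Mr} explicitly where the paper merely states the fact about primitive ideal spaces of corners.
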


\begin{proof}
Theorem~\ref{L:L3_0114} gives the conclusions for $C^* (G, A, \af).$
Now apply Proposition~\ref{P-FP303}.
The first conclusion follows from the fact that
the primitive ideal space of a corner
is a subset of the primitive ideal space of the original algebra.
The second conclusion follows from the fact
(Proposition~4.17 of~\cite{KR})
that hereditary subalgebras of purely infinite algebras
are again purely infinite.
\end{proof}

\begin{rmk}\label{R-Exs}
There are a fair number of C*-algebras which are purely infinite
and have finite primitive ideal spaces,
as required in
Theorem~\ref{L:L3_0114} and Corollary~\ref{C-L3_0114-304}.

For example,
this is true of the C*-algebras of many finite graphs.
If a finite graph~$E$
satisfies Condition~(K) as defined before Examples~4.6 of~\cite{Rb},
then $\Prim (C^* (E))$ is finite.
Necessary and sufficient conditions for pure infiniteness of $C^* (E)$
are given in Theorem~2.3 of~\cite{HS}.
(We are grateful to Gene Abrams for help locating this reference.)
In particular, the conditions in parts (d) and~(e)
of that theorem are entirely in terms of the graph.
(Warning: the convention relating the direction
of the arrows in a graph~$E$ to the definition of $C^* (E)$
in~\cite{HS} is the opposite of the convention in~\cite{Rb}.)

Theorem~\ref{L:L3_0114} and Corollary~\ref{C-L3_0114-304}
also apply to many minimal tensor products
$B \otimes_{\mathrm{min}} D$ in which $B$ is purely infinite simple
and $\Prim (D)$ is finite.
For example, it is enough to assume that $B$ is exact
(which ensures that $\Prim (B \otimes_{\mathrm{min}} D) \cong \Prim (D)$
when $\Prim (D)$ is finite)
and approximately divisible
(so that Theorem~4.5 of~\cite{KR} applies).
\end{rmk}

We can get a little farther by considering composition series.

\begin{thm}\label{T-CompSer-304}
Let $A$ be a purely infinite C*-algebra.
Suppose there is an ordinal~$\kp$
and a composition series $(I_{\ld})_{\ld \leq \kp}$ for~$A$
such that $\Prim (I_{\ld + 1} / I_{\ld})$ is finite for all $\ld < \kp.$
Let $G$ be a finite group,
and let $\af \colon G \to \Aut (A)$ be any action of $G$ on~$A.$
Then $C^* (G, A, \alpha)$ and $A^{\af}$ are purely infinite
and have composition series
in which all the subquotients have finite primitive ideal spaces.
\end{thm}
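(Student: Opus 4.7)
The plan is to reduce the theorem to repeated application of Theorem~\ref{L:L3_0114}, layer by layer along an $\af$-invariant composition series of~$A$ whose subquotients still have finite primitive ideal space and remain purely infinite. Concretely, for $\ld \leq \kp$ set
\[
L_\ld = \sum_{g \in G} \af_g (I_\ld).
\]
Each $L_\ld$ is $\af$-invariant, $L_0 = 0,$ $L_{\kp} = A,$ and continuity at a limit ordinal transfers from $(I_\ld)$ to $(L_\ld)$ because the finite sum (over~$G$) of closed ideals commutes with taking the closure of a directed union.

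The key point is that each subquotient $L_{\ld + 1} / L_\ld$ has finite primitive ideal space and is purely infinite. Indeed, for each $g \in G$ the image of $\af_g (I_{\ld + 1})$ in $L_{\ld + 1} / L_\ld$ is a quotient of $\af_g (I_{\ld + 1}) / \af_g (I_\ld) \cong I_{\ld + 1} / I_\ld,$ which by hypothesis has finite primitive ideal space and (as a subquotient of the purely infinite algebra~$A,$ via Proposition~4.3 and Theorem~4.19 of~\cite{KR}) is purely infinite. Since $L_{\ld + 1} / L_\ld$ is the sum of these $|G|$ ideals, iterating Corollary~\ref{C-OldL2_0123} and Lemma~\ref{L:L2_0123} yields the claim.

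Now set $M_\ld = C^* (G, L_\ld, \af).$ Since $G$ is amenable, $(M_\ld)_{\ld \leq \kp}$ is an increasing chain of ideals of $C^* (G, A, \af),$ continuous at limit ordinals, with $M_0 = 0,$ $M_{\kp} = C^* (G, A, \af),$ and $M_{\ld + 1} / M_\ld \cong C^* (G, \, L_{\ld + 1} / L_\ld, \, \af).$ By Lemma~\ref{L-FinIdeals304}, $L_{\ld + 1} / L_\ld$ has only finitely many ideals, and in particular only finitely many $\af$-invariant ones, so Theorem~\ref{L:L3_0114} implies that each $M_{\ld + 1} / M_\ld$ has finite primitive ideal space and is purely infinite. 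A transfinite induction---using Theorem~4.19 of~\cite{KR} at successor ordinals and the local-approximation form of pure infiniteness (Proposition~4.18 of~\cite{KR}, as invoked in the proof of Proposition~\ref{P:P1_0019}) at limit ordinals---then shows that $C^* (G, A, \af)$ is purely infinite. Finally, by Proposition~\ref{P-FP303}, $A^\af$ is a corner of $C^* (G, A, \af),$ so pure infiniteness descends via Proposition~4.17 of~\cite{KR}, and compressing $(M_\ld)$ by the corner projection gives a composition series of $A^\af$ whose subquotients still have finite primitive ideal space by Lemma~\ref{R-604Her}. The main point of care is the bookkeeping at limit ordinals (both for $(L_\ld)$ and $(M_\ld)$) and the propagation of finiteness of $\Prim$ across finite sums of ideals, which is handled by iterating the two-ideal Corollary~\ref{C-OldL2_0123}.
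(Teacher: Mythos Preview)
Your proposal is correct and follows essentially the same approach as the paper: both define the $G$-invariant composition series $L_\ld = \sum_{g \in G} \af_g(I_\ld),$ verify that its subquotients have finite primitive ideal space (your decomposition into images of the $\af_g(I_{\ld+1})$ is the same as the paper's identity $(G I_{\ld+1})/(G I_\ld) = G\big((G I_\ld + I_{\ld+1})/(G I_\ld)\big)$), apply Theorem~\ref{L:L3_0114} at each level, and run the same transfinite induction with Theorem~4.19 and Proposition~4.18 of~\cite{KR}. One small correction: your appeal to Lemma~\ref{R-604Her} at the end is misplaced, since that lemma concerns topological dimension zero; what you need is the easier fact (from the same Theorem~5.5.5 of~\cite{Mr}) that $\Prim$ of a hereditary subalgebra embeds as an open subset, so finiteness of $\Prim$ passes to corners.
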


\begin{proof}
We first claim that $A$ has a $G$-invariant composition series
$(J_{\ld})_{\ld \leq \kp}$
such that $\Prim (J_{\ld + 1} / J_{\ld})$ is finite for all $\ld < \kp.$
For use in this argument,
for an ideal $I$ in a C*-algebra~$B$
with an action $\bt \colon G \to \Aut (B),$
we write $G I$ for the ideal
$\sum_{g \in G} \bt_g (I).$
Define $J_{\ld} = G I_{\ld}$ for $\ld \leq \kp.$
It is easy to check that $(J_{\ld})_{\ld \leq \kp}$
is a $G$-invariant composition series.
(Some of the subquotients may be zero, but this does not matter.)
So we must show that, for $\ld < \kp,$
the space $\Prim \big( (G I_{\ld + 1}) / (G I_{\ld}) \big)$ is finite.
First, we have
\[
( G I_{\ld} + I_{\ld + 1} ) / (G I_{\ld})
  \cong I_{\ld + 1} / (I_{\ld + 1} \cap G I_{\ld}).
\]
Applying Lemma~\ref{L:L1_0126}
with $I_{\ld + 1} / I_{\lambda}$ in place of~$A,$
we see that this algebra has finite primitive ideal space.
% Let ${\overline{\af}} \colon G \to \Aut ( A / (G I_{\ld}) )$
% denote the action on the quotient.
As ideals in $A / (G I_{\ld}),$
we have
\[
(G I_{\ld + 1}) / (G I_{\ld})
  = G \big( ( G I_{\ld} + I_{\ld + 1} ) / (G I_{\ld}) \big)
  = \sum_{g \in G} \af_g
     \big( ( G I_{\ld} + I_{\ld + 1} ) / (G I_{\ld}) \big),
\]
which has finite primitive ideal space by Corollary~\ref{C-OldL2_0123}.
This completes the proof of the claim.

It now follows from Theorem~\ref{L:L3_0114}
that $C^* (G, A, \alpha)$ has a composition series
in which all the subquotients have finite primitive ideal spaces.
It easily follows from Proposition~\ref{P-FP303}
that the same is true for $A^{\af}.$

We now prove by induction on~$\ld$
that $C^* (G, J_{\ld}, \af)$ is purely infinite,
starting with the smallest $\ld_0$ such that $J_{\ld_0} \neq \{ 0 \}.$
Theorem~4.19 of~\cite{KR} implies
that $J_{\ld_0}$ is purely infinite,
and by construction $\Prim ( J_{\ld_0} )$ is finite,
so $C^* (G, J_{\ld_0}, \af)$ is purely infinite
by Theorem~\ref{L:L3_0114}.

Suppose now that $C^* (G, J_{\mu}, \af)$ is known to be purely infinite
for all $\mu < \ld.$
If there is $\mu$ such that $\mu + 1 = \ld,$
then we have a short exact sequence
\[
0 \longrightarrow C^* (G, J_{\mu}, \alpha)
  \longrightarrow C^* (G, \, J_{\mu + 1}, \, \alpha)
  \longrightarrow
     C^* (G, \, J_{\mu + 1} / J_{\mu}, \, \af)
  \longrightarrow 0,
\]
in which $C^* (G, J_{\mu}, \alpha)$ is purely infinite
by the induction hypothesis
and $C^* (G, \, J_{\mu + 1} / J_{\mu}, \, \af)$
is purely infinite by the same argument
as for $\ld = \ld_0.$
Therefore $C^* (G, \, J_{\mu + 1}, \, \alpha)$ is purely infinite
by Theorem~4.19 of~\cite{KR}.

If $\ld$ is a limit ordinal,
then
\[
C^* (G, J_{\ld}, \alpha)
  = {\overline{\bigcup_{\mu < \ld} C^* (G, J_{\mu}, \alpha)}},
\]
which is a direct limit of purely infinite C*-algebras
by the induction hypothesis,
and therefore is purely infinite by
Proposition~4.18 of~\cite{KR}.

This completes the induction.
Thus $C^* (G, A, \af)$ is purely infinite.
For~$A^{\af},$
apply Proposition~4.17 of~\cite{KR} and Proposition~\ref{P-FP303}.
\end{proof}

We give an example for Theorem~\ref{T-CompSer-304} in which
the C*-algebra is not a direct sum of C*-algebras
with finite primitive ideal spaces.

\begin{exa}\label{E-2801CmpSer}
Adopt the conventions for graph C*-algebras described in Chapter~1
of~\cite{Rb}.
(Warning: the papers \cite{BPRS}, \cite{HS}, and~\cite{KP},
all used below, use the opposite convention for the relation
between the direction
of the arrows in the graph and the definition of its C*-algebra.)
Consider the following graph~$E$:

\centerline{
\includegraphics[width=10cm]{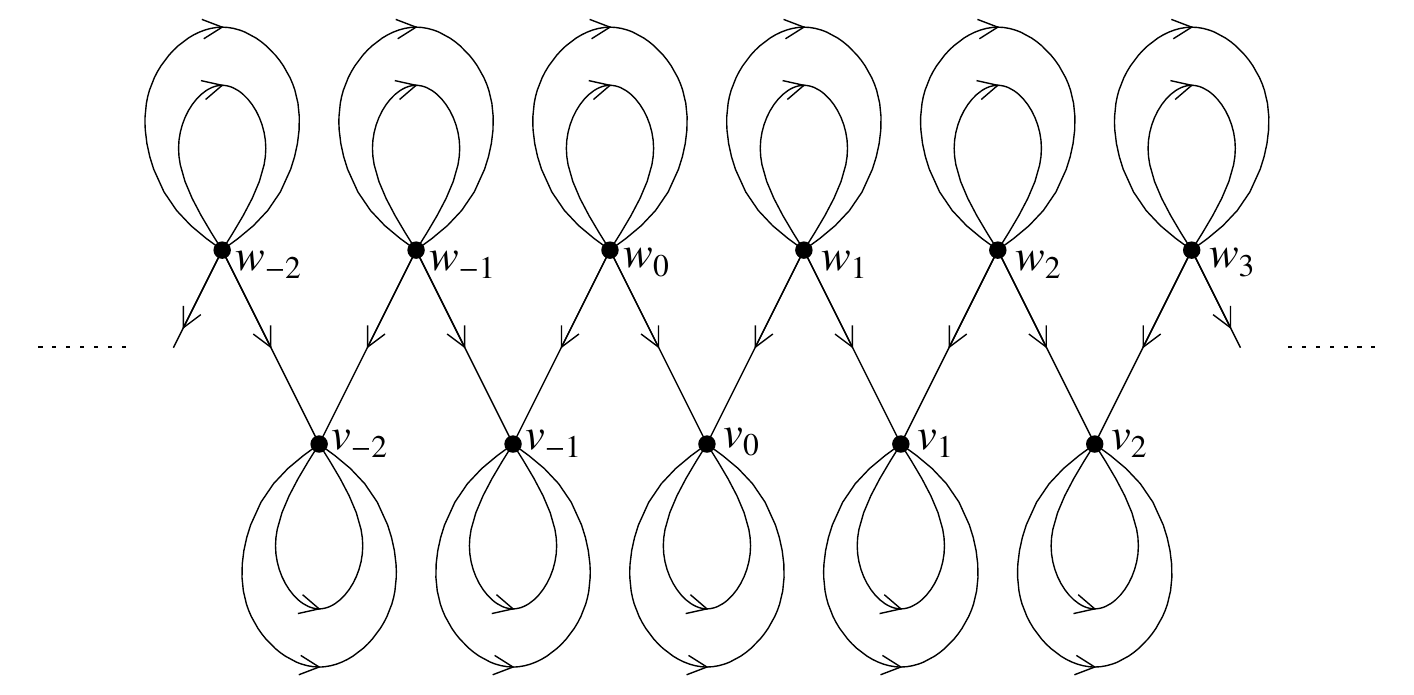}
}
\noindent
Let $C^* (E)$ be its C*-algebra.
The automorphism group of~$E$ acts on $C^* (E),$
as described,
for example,
in the introduction to Section~3 of~\cite{KP}.
There is a unique automorphism $h$ of $E$ of order~$2$
such that
$h (v_n) = v_{-n}$ and $h (w_n) = w_{1 - n}$ for all $n \in \Z,$
and which sends the outer loop at each vertex $x$
to the outer loop at $h (x).$
This gives an automorphism $\af$ of $C^* (E)$ of order~$2.$

We now check the properties of $C^* (E).$
We can check pure infiniteness using Theorem~2.3 of~\cite{HS},
but for this graph it seems easier to proceed more directly.
The graph~$E$
satisfies Condition~(K) as defined before Examples~4.6 of~\cite{Rb}
and
is row finite.
Therefore Theorem~4.9 of~\cite{Rb} applies.
The set $H = \{ w_n \colon n \in \Z \}$ is a saturated hereditary subset
of the vertices,
so this theorem gives, using the notation there, a short exact sequence
\[
0 \longrightarrow I_H
  \longrightarrow C^* (E)
  \longrightarrow C^* (E \setminus H)
  \longrightarrow 0,
\]
in which $I_H$ has a full corner isomorphic to $C^* (E_H).$
Both $E \setminus H$ and $E_H$
are isomorphic to countable disjoint unions
of copies of the graph with one vertex and two edges,
whose C*-algebra is~${\mathcal{O}}_2.$
Therefore $C^* (E \setminus H) \cong \oplus_{n \in \Z} {\mathcal{O}}_2$
and $I_H$ is stably isomorphic to $\oplus_{n \in \Z} {\mathcal{O}}_2.$
It now follows from Theorem~4.19 of~\cite{KR}
that $C^* (E)$ is purely infinite.

The graph $E$ is not a disjoint union of nonempty graphs,
from which it is easy to see that $C^* (E)$ does not have
a nontrivial direct sum decomposition.
The sets
\[
H_n = \big\{ v_{- n}, v_{- n + 1}, \ldots, v_{n - 1},
           w_{- n}, w_{- n + 1}, \ldots, w_{n} \big\},
% H_0 = \{ w_0 \},
% \,\,\,\,\,\,
% H_1 = \{ v_{-1}, v_0, w_{-1}, w_0, w_1 \},
% \,\,\,\,\,\,
% H_2 = \{ v_{-2}, v_{-1}, v_0, v_1, w_{-2}, w_{-1}, w_0, w_1, w_2 \},
% \,\,\,\,\,\,
% \ldots
\]
for $n \in \Nz,$
form an increasing sequence of finite saturated hereditary subsets
of the vertices of~$E$
whose union is all the vertices of~$E.$
The corresponding ideals each have finitely many ideals
(using Theorem~4.19 of~\cite{KR}).
Their union is dense in~$C^* (E),$
since, by Theorem~4.19 of~\cite{KR},
the closure of the union comes from some saturated hereditary subset
of the vertices of~$E.$
Therefore $C^* (E)$ has a composition series
in which all the subquotients have finite primitive ideal spaces.
\end{exa}

We didn't explicitly need the space $\Prim (C^* (E)),$
but it can easily be calculated using Theorem~6.3 of~\cite{BPRS}.

There are many other automorphisms of the graph $E$ in
Example~\ref{E-2801CmpSer} of order two.
For example, one can modify $h$ by having it exchange the two loops
at $v_0$ instead of acting trivially on them.
One similarly gets two automorphisms of order two which act on
the vertices via $v_n \mapsto v_{- n - 1}$ and $w_n \mapsto w_{- n}.$
One gets automorphisms of order four by specifying that for some
values of $n > 0,$
the automorphism should send the outer loop at $v_n$ or at $w_n$
to the inner loop at $h (v_n)$ or $h (w_n).$

Free actions on graphs are the subject of a very nice theorem
in~\cite{KP},
and we can make a similar example with a free action of~$\Z_2.$
Let $F$ be the following graph:

\centerline{
\includegraphics[width=12cm]{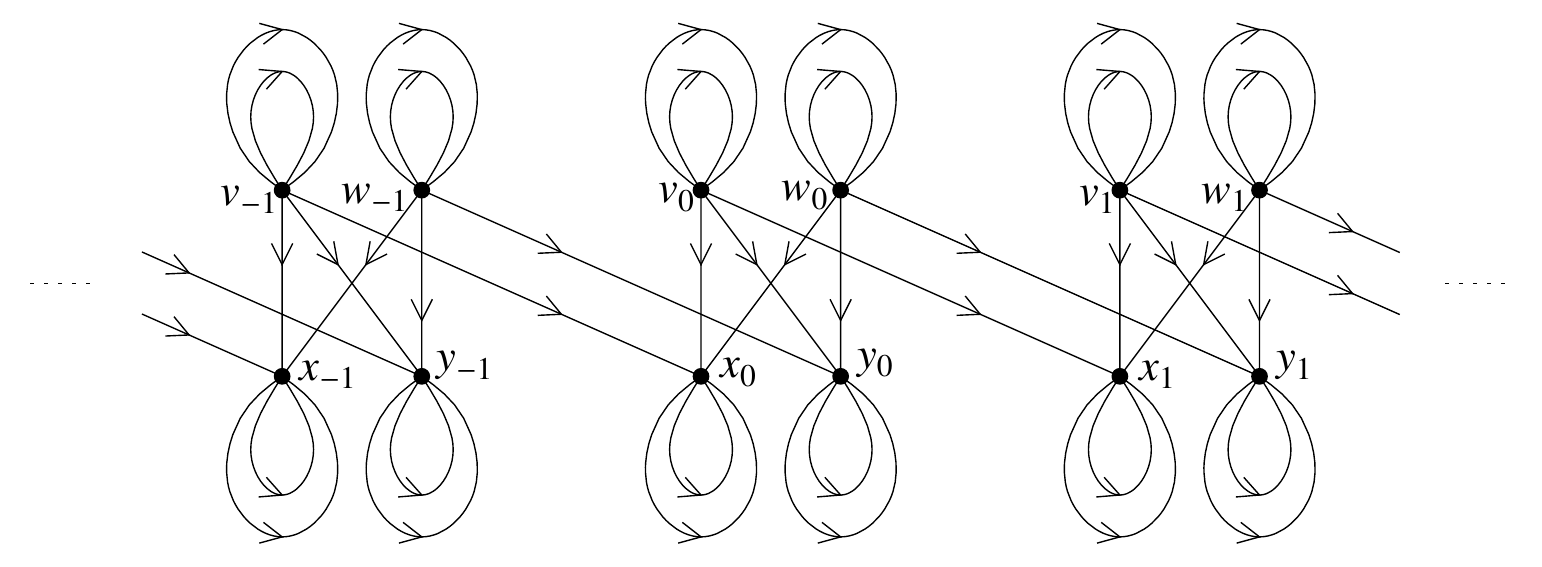}
}
\noindent
Then $C^* (F)$ is purely infinite and has a composition series
whose subquotients have finite primitive ideal spaces
by the same reasoning as in Example~\ref{E-2801CmpSer}.
There is an automorphism $h \colon F \to F$ of order~$2$
which acts on the vertices by
\[
h (v_n) = w_n,
\,\,\,\,\,\,
h (w_n) = v_n,
\,\,\,\,\,\,
h (x_n) = y_n,
\andeqn
h (y_n) = x_n
\]
for $n \in \Z,$ and the corresponding action of $\Z_2$ on~$F$ is free.

A proof related to that of Theorem~\ref{T-CompSer-304}
also gives the following result.
The hypotheses are stronger than those of Theorem~\ref{T-CompSer-304},
but still apply to a C*-algebra
which is an arbitrary (not necessarily finite) direct sum
of purely infinite C*-algebras with finite primitive ideal spaces,
and to a purely infinite C*-algebra with a composition series
indexed by $\Nz$ and with simple subquotients.
Example~\ref{E-2801CmpSer} and the related examples
discussed afterwards also satisfy its hypotheses.
The conclusion is stronger,
since it includes the ideal property for both the crossed product
and the fixed point algebra.
In particular, pure infiniteness
rules out the phenomenon in Example~\ref{E:TwoStep}.

\begin{prp}\label{P:P1_0126}
Let $\af \colon G \to \Aut (A)$ be an action of a finite group $G$
on a C*-algebra~$A.$
Suppose that there is a set ${\mathcal{I}}$ of ideals in~$A,$
each of which is purely infinite and has finite primitive ideal space,
with the following property.
For every finite subset $S \subset A$ and every $\ep > 0,$
there is $I \in {\mathcal{I}}$
such that $\dist (a, \, I) < \ep$ for all $a \in S.$
Then $C^* (G, A, \af)$ and $A^{\af}$
are purely infinite and have the ideal property.
\end{prp}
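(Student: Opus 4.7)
The plan is to reduce to the already-proved Theorem~\ref{L:L3_0114} and Corollary~\ref{L:L5_0126} by building out of $\mathcal{I}$ a directed family of $G$-invariant ideals of~$A$, each purely infinite with finite primitive ideal space, whose union is dense in~$A$. For each $I \in \mathcal{I}$, let $I^G = \sum_{g \in G} \af_g (I).$ Because $G$ is finite, iterating Lemma~\ref{L:L2_0123} and Corollary~\ref{C-OldL2_0123} shows that $I^G$ is again purely infinite with finite primitive ideal space. Let $\Ld$ be the directed set of finite sums of ideals of the form $I^G$ with $I \in \mathcal{I}$; again by Lemma~\ref{L:L2_0123} and Corollary~\ref{C-OldL2_0123} each $J_\ld \in \Ld$ is $\af$-invariant, purely infinite, and has finite primitive ideal space. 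Since $I \subset I^G,$ the local approximation hypothesis on $\mathcal{I}$ carries over: for every finite $S \subset A$ and every $\ep > 0$ there is $\ld \in \Ld$ with $\dist(a, J_\ld) < \ep$ for all $a \in S.$

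Theorem~\ref{L:L3_0114} now shows that each $C^* (G, J_\ld, \af)$ is purely infinite with finite primitive ideal space, and Corollary~\ref{L:L5_0126} shows that it has the ideal property. Since each $J_\ld$ is $\af$-invariant, $C^* (G, J_\ld, \af)$ is an ideal in $C^* (G, A, \af),$ and the density argument above gives $C^* (G, A, \af) = \overline{\bigcup_{\ld \in \Ld} C^* (G, J_\ld, \af)}.$ Thus $C^* (G, A, \af)$ is locally approximated by purely infinite C*-algebras and so is purely infinite by Proposition~4.18 of~\cite{KR}. For the ideal property, given any ideal $K \subset C^* (G, A, \af),$ a routine approximate identity argument (using that each $C^* (G, J_\ld, \af)$ is an ideal, not just a subalgebra) gives $K = \overline{\bigcup_{\ld} K \cap C^* (G, J_\ld, \af)}.$ Each $K \cap C^* (G, J_\ld, \af)$ is an ideal of $C^* (G, J_\ld, \af)$ and so is generated there by its projections, which lie in~$K.$ The ideal of $C^* (G, A, \af)$ generated by all these projections therefore contains every $K \cap C^* (G, J_\ld, \af),$ hence equals~$K.$

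For $A^\af,$ Proposition~\ref{P-FP303} realizes it as a corner $p C^* (G, A, \af) p,$ and Proposition~4.17 of~\cite{KR} gives pure infiniteness. Cutting the approximating ideals by~$p$ gives $A^\af = \overline{\bigcup_{\ld} p C^* (G, J_\ld, \af) p},$ and each corner $p C^* (G, J_\ld, \af) p$ is a hereditary subalgebra of a purely infinite C*-algebra with finite primitive ideal space, so is itself purely infinite with finite primitive ideal space; Proposition~2.3 of~\cite{KNP} supplies the ideal property. Running the same argument as in the previous paragraph (now with $A^\af$ in place of $C^* (G, A, \af)$ and the hereditary subalgebras $p C^* (G, J_\ld, \af) p$ in place of the ideals $C^* (G, J_\ld, \af)$) yields the ideal property for $A^\af.$ The main delicate point is this last step: the local approximations of $A^\af$ are only hereditary subalgebras of the ambient algebra, not ideals, so one must verify that for an ideal $L \subset A^\af$ the sets $L \cap p C^* (G, J_\ld, \af) p$ are still ideals of the corresponding approximants and that projections there remain projections of~$L$; both hold because $p C^* (G, J_\ld, \af) p$ is hereditary in $A^\af$ and $L$ is an ideal of $A^\af.$
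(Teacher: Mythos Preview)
Your proof is correct and follows essentially the same route as the paper's: build $G$-invariant ideals $J_\ld$ from finite sums of $G$-saturations of members of~$\mathcal{I}$, use Lemma~\ref{L:L2_0123}, Corollary~\ref{C-OldL2_0123}, and Theorem~\ref{L:L3_0114} to see that each $C^*(G, J_\ld, \af)$ is purely infinite with finite primitive ideal space, and pass to the direct limit. Two small remarks. First, for the ideal property of the limit the paper simply quotes Proposition~2.3 of~\cite{Psn1}; your direct argument is exactly the content of that proposition in this setting. Second, your final ``delicate point'' is not delicate at all: since $C^*(G, J_\ld, \af)$ is an ideal in $C^*(G, A, \af)$, the corner $p\, C^*(G, J_\ld, \af)\, p$ is an \emph{ideal} (not merely a hereditary subalgebra) of $A^\af = p\, C^*(G, A, \af)\, p$, so the same argument as for the crossed product applies verbatim. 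The paper phrases this step using $(J_\ld)^\af = J_\ld \cap A^\af$, which is the same ideal under the identification of Proposition~\ref{P-FP303}.
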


\begin{proof}
We use the notation $G I$ from the proof of Theorem~\ref{T-CompSer-304}.
Let $\Ld$ be the set of all finite subsets of~${\mathcal{I}}.$
For $\ld \in \Ld,$
set $J_{\ld} = \sum_{I \in \ld} G I.$
Then $\Prim (J_{\ld})$ is finite by Corollary~\ref{C-OldL2_0123}.
An argument similar to one used in the proof of
Theorem~\ref{T-CompSer-304}
shows that $C^* (G, J_{\ld}, \alpha)$ is purely infinite,
so Proposition~4.17 of~\cite{KR} and Proposition~\ref{P-FP303}
imply that $(J_{\ld})^{\af}$ is purely infinite.
Theorem~\ref{L:L3_0114} also implies that
$\Prim ( C^* (G, J_{\ld}, \alpha) )$ is finite,
so Proposition~\ref{P-FP303} implies that
$\Prim ( (J_{\ld})^{\af} )$ is finite.
So $C^* (G, J_{\ld}, \alpha)$ and $(J_{\ld})^{\af}$
have the ideal property by Proposition~2.3 of~\cite{KNP}.

We have $A = \dirlim_{\ld \in \Ld} J_{\ld},$
so $C^* (G, A, \af) = \dirlim_{\ld \in \Ld} C^* (G, J_{\ld}, \alpha),$
which is then purely infinite by Proposition~4.18 of~\cite{KR}.
Also, $C^* (G, A, \af)$ has the ideal property
by Proposition~2.3 of~\cite{Psn1}.
Since $A^{\af} = \dirlim_{\ld \in \Ld} (J_{\ld})^{\af},$
the same reasoning shows that
$A^{\af}$ is purely infinite and has the ideal property.
\end{proof}

\end{document}